\def\specialsection{\@startsection{section}{1}%
  \z@{\linespacing\@plus\linespacing}{.5\linespacing}%
  {\normalfont}}
\def\section{\@startsection{section}{1}%
  \z@{.7\linespacing\@plus\linespacing}{.5\linespacing}%
  {\normalfont\scshape}}
\newlength\mylength
\DeclareSymbolFont{SY}{U}{psy}{m}{n}
\DeclareMathSymbol{\emptyset}{\mathord}{SY}{'306}
\newtheorem{thm}{Theorem}[section]
\newtheorem{cor}[thm]{Corollary}
\newtheorem{lem}[thm]{Lemma}
\newtheorem{prop}[thm]{Proposition}
\theoremstyle{definition}
\newtheorem{defn}[thm]{Definition}
\newtheorem{rem}[thm]{Remark}
\newtheorem{example}[thm]{Example}
\newcommand{\dr}{\rightrightarrows}
\newcommand{\s}{\simeq}
\newcommand{\jp}{(J_{m_i},\partial J_{m_i})^{\Box (n-1)}}
\newcommand{\jt}{(J_{m_i}^{\Box n},\partial J_{m_i}^{\Box n}, \overline{J}_{m_i}^{\Box (n-1)})}
\newcommand{\pushoutcorner}[1][dr]{\save*!/#1-1.2pc/#1:(-1,1)@^{|-}\restore}
\newcommand{\Boxx}{\,\Box\,}
\newcommand{\qqed}{\hfill\Box}
\numberwithin{equation}{section}
\begin{document}
\captionsetup[figure]{labelfont={bf},labelformat={default},labelsep=space,name={Figure}}
\title[Calculating higher digraph homotopy groups]{Calculating higher digraph homotopy groups}

\author{Stephen Theriault} 
\address{Mathematical Sciences, University of Southampton, Southampton SO17 1BJ, United Kingdom} 
\email{S.D.Theriault@soton.ac.uk} 
\author{Jie Wu} 
\address{\vspace{-3.5mm}Beijing Key Laboratory of Topological Statistics and Applications for Complex Systems, Beijing }
\address{Institute of Mathematical Sciences and Applications, Beijing 101408, China.}
\email{wujie@bimsa.cn} 
\author{Shing-Tung Yau} 
\address{\vspace{-3.5mm}Beijing Institute of Mathematical Sciences and Applications, 101408, China}  
\address{Yau Mathematical Sciences Center, Tsinghua
University, Beijing 100084, China.} 
\email{styau@tsinghua.edu.cn} 
\author{Mengmeng Zhang$^{*}$}
\address{\vspace{-3.5mm}Beijing Key Laboratory of Topological Statistics and Applications for Complex Systems, Beijing }
\address{Institute of Mathematical Sciences and Applications, Beijing 101408, China.}
\email{mengmengzhang@bimsa.cn}

\subjclass[2020]{Primary 05C20, 55Q05}
\keywords{digraph, homotopy group, Hurewicz homomorphism, suspension} 
\thanks{$^{*}$ Research supported by start-up research funds of the second author and last author from Beijing Institute of Mathematical Sciences and Applications} 

\begin{abstract}
We give the first tractable and systematic examples of nontrivial higher digraph homotopy groups. To do this we define relative digraph homotopy groups and show these satisfy a long exact sequence analogous to the relative homotopy groups of spaces. We then define digraph suspension and Hurewicz homomorphisms and show they commute with each other. The existence of nontrivial digraph homotopy groups then reduces to the existence of corresponding groups in the degree $1$ path homology of digraphs. 
\end{abstract}
\maketitle

\section{Introduction}

Complex networks are frequently modelled by directed graphs (digraphs). Understanding the higher order structure of digraphs then gives tools for detecting and harnessing deeper structure within the networks. A framework for studying digraphs was developed by Grigor'yan, Lin, Muranov and Yau~\cite{GLMY12}. Inspired by ideas in algebraic topology, they constructed a homology theory for digraphs which was later enhanced by a corresponding homotopy theory for digraphs~\cite{GLMY15}. This marked the starting point for what is emerging as an entire new area referred to as GLMY theory, and which has been developed in numerous ways~\cite{Asao22,cubical,CK23,Chow3,DIMZ24,FI24,GWX22,GJM18, WL23}. 

In~\cite{GLMY15}, Grigor'yan, Lin, Muranov and Yau defined the fundamental group of a digraph $G$. They also defined a based loop digraph $LG$ that is the analogue of the based loop space in topology, and used this to define the $n^{th}$-higher digraph homotopy group $\pi_{n}(G)$ by taking the fundamental group of the $(n-1)$-fold iterated based loop digraph of $G$. 

A more direct approach to defining higher digraph homotopy groups was given in~\cite{LWYZ24}. This was based on the cubical approach to defining the higher homotopy groups of a space $X$ with basepoint~$x_{0}$, where if $I=[0,1]$ is the unit interval, $I^{n}$ is the product of $n$ copies of $I$ and $\partial I^{n}$ is the boundary of $I^{n}$, then $\pi_{n}(X)$ consists of homotopy classes of maps of pairs $(I^{n},\partial I^{n})\rightarrow (X,x_{0})$. There were several subtleties involved in making this definition consistent, stemming from the fact that there are many digraphs competing to be an analogue of~$I$, including $0\rightarrow 1$, $0\leftarrow 1$, and concatenations of these two digraphs. The outcome was higher digraph homotopy groups $\overline{\pi}_{n}(G)$ for $n\geq 1$, where $\overline{\pi}_{1}(G)$ is isomorphic to GLMY's $\pi_{1}(G)$, but the relationship between $\overline{\pi}_{n}(G)$ and $\pi_{n}(G)$ for $n\geq 2$ is unclear. The advantage of using $\overline{\pi}_{n}(G)$ was that it produced a long exact sequence of digraph homotopy groups associated to a based digraph map $A\rightarrow G$, analogous to the Puppe sequence of homotopy groups associated to a map of based spaces $X\rightarrow Y$. 

At this point two questions naturally arise. What other properties of higher homotopy groups for topological spaces have digraph analogues? Can any higher digraph homotopy groups be calculated? The purpose of this paper is to start addressing these questions. 

We address the first question by considering relative digraph homotopy groups. These are defined, in a manner analogous to how the definition of homotopy classes of maps $(I^{n},\partial I^{n})\rightarrow (X,x_{0})$ are extended to the relative case. This involves being careful to address the same subtleties that faced~\cite{LWYZ24} in defining the absolute digraph homotopy groups $\overline{\pi}_{n}(G)$. We show that, analogously to the relative homotopy groups of topological spaces, there is a canonical long exact sequence. 

\begin{thm}\label{introexact}
Let $(G,A)$ be a based digraph pair. Then there is a long exact sequence
$$\xymatrix@C=0.5cm{
 \cdots\ar[r]&\overline{\pi}_{n+2}(G,A)\ar[r]^-{\partial_{n+2}} &\overline{\pi}_{n+1}(A) \ar[r]^{i_{n+1}}&  \overline{\pi}_{n+1}(G) \ar[r]^-{j_{n+1}}&
    \overline{\pi}_{n+1}(G,A)\ar[r]^-{\partial_{n+1}}&
  \overline{\pi}_n(A) \ar[r]^{i_{n}} & \overline{\pi}_n(G)}$$
of based sets for any $n\geq0 $. If $n\geq 1$, it is a long exact sequence of groups.~$\qqed$
\end{thm}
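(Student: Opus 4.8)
The plan is to follow the classical proof of the long exact sequence of a pair, replacing topological homotopies throughout by digraph homotopies, that is, by digraph maps out of a box product $-\Boxx J_{m}$ with a line digraph that respect the relevant basepoint and subdigraph constraints in every time slice. I would first fix the three maps. The inclusion $A\hookrightarrow G$ induces $i_{n}$. A representative $(\jmn,\partial \jmn)\to(G,x_{0})$ of a class in $\overline{\pi}_{n}(G)$ is already a map of triples $\jt\to(G,A,x_{0})$, since $x_{0}\in A$, and this induces $j_{n}$. Restricting a representative $\jt\to(G,A,x_{0})$ to the distinguished face $\overline{J}_{m_{i}}^{\Box(n-1)}$, which lands in $A$ by the definition of a map of triples, induces $\partial_{n}$. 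For each map I would check independence of the chosen representative and of the interval sizes and orientations indexing the colimit defining $\overline{\pi}_{n}$; concretely, a homotopy of triples restricts to a homotopy on the distinguished face and box-multiplies to a homotopy of the cube, which is what these well-definedness statements require. For $n\geq 1$ I would then verify that $i_{n}$, $j_{n}$, $\partial_{n}$ are homomorphisms for the concatenation product, which is immediate because each is given by composition or by restriction, and both operations commute with concatenation in the first coordinate.

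The heart of the argument is exactness at the three types of position, which I would treat in the usual order, the main tool being a digraph compression criterion: a class $[f]\in\overline{\pi}_{n}(G,A)$ is trivial if and only if $f$ is digraph-homotopic rel $\partial \jmn$ to a map whose image lies in $A$. Granting this, exactness at $\overline{\pi}_{n}(G)$ follows quickly. The composite $j_{n}i_{n}$ vanishes because a class from $A$ has a representative whose entire cube lies in $A$, hence is trivially compressible. Conversely, if $j_{n}[f]=0$ then the criterion gives a map $g$ into $A$ with $g\simeq f$ rel $\partial\jmn$; since $f$ sends $\partial \jmn$ to $x_{0}$ and the homotopy is rel $\partial\jmn$, the map $g$ represents a class in $\overline{\pi}_{n}(A)$ with $i_{n}[g]=[f]$.

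For exactness at $\overline{\pi}_{n}(G,A)$, the composite $\partial_{n}j_{n}$ vanishes because an absolute class sends all of $\partial\jmn$, and in particular the distinguished face, to $x_{0}$. Conversely, $\partial_{n}[f]=0$ supplies a nullhomotopy of $f$ restricted to the distinguished face inside $A$; gluing this nullhomotopy onto that face as a collar and reabsorbing the collar into the cube deforms $f$ through maps of triples until the whole of $\partial\jmn$ maps to $x_{0}$, so that $f$ becomes an absolute class and $[f]\in\operatorname{im} j_{n}$. For exactness at $\overline{\pi}_{n}(A)$, the composite $i_{n}\partial_{n+1}$ vanishes because the face-restriction of a relative class extends over the whole cube into $G$, which is precisely a nullhomotopy in $G$. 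Conversely, if $[g]\in\overline{\pi}_{n}(A)$ dies in $\overline{\pi}_{n}(G)$, a nullhomotopy $\jmn\Boxx J_{m}\to G$ of a representative is, after reinterpreting the homotopy direction as an extra cube coordinate, a map of triples $(J_{m_{i}}^{\Box(n+1)},\partial J_{m_{i}}^{\Box(n+1)},\overline{J}_{m_{i}}^{\Box n})\to(G,A,x_{0})$ whose boundary map $\partial_{n+1}$ returns $[g]$. The endpoints $n=0$ and $n=1$ I would record as the based-set versions of these statements, where exactness means that the preimage of the basepoint equals the image, and the arguments degenerate to the cases already treated.

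The main obstacle is the digraph compression criterion and the collar-reabsorption step, both used in the converse directions. Unlike in topology, a digraph homotopy admits no continuous reparametrisation, so compressing a representative into $A$, or pushing a face nullhomotopy back onto the cube, may force a subdivision or reorientation of the line digraphs, i.e.\ a change of index in the colimit that defines $\overline{\pi}_{n}$. I expect the real work to lie in showing that these refinements are compatible with the equivalence relation built into $\overline{\pi}_{n}$ --- exactly the tension between competing models of the interval that~\cite{LWYZ24} had to resolve for the absolute groups --- so that the compressed maps and spliced homotopies represent genuine elements and genuine equalities of classes. Once the compression criterion is set up in a form stable under this refinement, the exactness verifications above become essentially formal.
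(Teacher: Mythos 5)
Your proposal transplants the classical cube-level proof --- direct definitions of $i_n$, $j_n$, $\partial_n$ followed by six exactness verifications resting on a compression criterion and a collar-reabsorption argument --- whereas the paper proves the theorem by a completely different route. It constructs the relative reduced path-digraph $\overline{P}(G,A,x_0)$, identifies it with the mapping path digraph $P_i$ of the inclusion $i\colon A\rightarrow G$ (Proposition~\ref{iso}), establishes the duality isomorphism $\Phi_n\colon\overline{\pi}_n(G,A)\rightarrow\overline{\pi}_{n-1}(\overline{P}(G,A,x_0))$ (Proposition~\ref{dual}, whose surjectivity input is Lemma~\ref{surjec}), and then reads the long exact sequence off the already-known fibration sequence of \cite[Theorem 5.9]{LWYZ24} applied to $i$, with $\partial_{n+1}$ and $j_{n+1}$ defined as the composites~(\ref{partialdef}) and~(\ref{jdef}). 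All of the delicate cube manipulation is thereby concentrated in one place (the proof that $\Phi_n$ is a well-defined bijective homomorphism, which is carried out by induction on faces and careful bookkeeping of subdivisions) rather than spread over six separate exactness arguments.

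The problem with your route is that the two steps you yourself flag as ``the main obstacle'' --- the digraph compression criterion and the reabsorption of a face nullhomotopy into the cube --- are precisely the mathematical content of the theorem, and you do not prove them. In topology both rest on reparametrisations of $I^n$ (a deformation retraction of $I^n$ onto part of its boundary, and the homeomorphism $I^n\cup_{I^{n-1}\times\{0\}}(I^{n-1}\times I)\cong I^n$) together with the homotopy extension property of $(I^n,\partial I^n)$. None of these is available for digraphs: there is no established homotopy extension property for $(J_{m_i}^{\Box n},\partial J_{m_i}^{\Box n})$, homotopies are rigid combinatorial objects constrained by arrow directions, and any splicing must respect the parity and alternation pattern of the standard line digraphs $J_m$ as well as the direct-limit identifications defining $\overline{\pi}_n$. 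Deferring these to ``once the compression criterion is set up'' leaves the proof essentially unstarted; the paper's detour through $\overline{P}(G,A,x_0)$ exists precisely to avoid having to establish such statements. There is also a small but telling slip in your setup: the subdigraph $\overline{J}_{m_i}^{\Box(n-1)}$ is the part of the boundary sent to the basepoint $x_0$, not the face landing in $A$; the boundary map $\partial_{n}$ is restriction to the complementary free face $\{0\}\Boxx J_{m_i}^{\Box(n-1)}$, which is what the paper's composite~(\ref{partialdef}) computes via evaluation of paths at $0$.
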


Theorem~\ref{introexact} is then used to define a suspension homomorphism  
\[E_n:\overline{\pi}_{n}(G)\longrightarrow\overline{\pi}_{n+1}(\Sigma G)\] 
where $\Sigma G$ is the digraph suspension of $G$. The suspension homomorphism is similar to the suspension map for the homotopy groups of spaces. One fruitful way this is used topologically is in tandem with the Hurewicz homomorphism. So we next define a digraph analogue of the Hurewicz homomorphism. The ``cubical" nature of the definition of the digraph homotopy group $\overline{\pi}_{n}(G)$ means that the most natural homology theory in which to consider a Hurewicz map is the cubical homology theory of digraphs developed in~\cite{GMJ21}. In that same paper it was shown that there is a homomorphism $H^c_n(G)\rightarrow H_n(G)$ from the cubical digraph homology of $G$ to the GLMY path homology of $G$. The composite 
\[\widetilde{H}_n:\overline{\pi}_n(G)\rightarrow H^c_n(G)\rightarrow H_n(G)\] 
is referred to as the GLMY Hurewicz homomorphism. A suspension homomorphism $E'_n: H_n(G)\rightarrow H_{n+1}(\Sigma G)$ can also be defined in GLMY's path homology, and the key property of the Hurewicz homomorphism is that it commutes with suspension homomorphisms. 

\begin{thm}\label{intropcompa}
For any digraph $G$, the homotopy suspension and GLMY homology suspension are compatible, i.e. there is a commutative square $$\xymatrix{
     \overline{\pi}_{n}(G)\ar[r]^-{E_n}\ar[d]_{\widetilde{H}_n} & \overline{\pi}_{n+1}(\Sigma G)\ar[d]^{\widetilde{H}_{n+1}} \\
     H_{n}(G)\ar[r]^-{E'_n} & H_{n+1}(\Sigma G). }$$ 
\end{thm}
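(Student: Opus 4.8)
The natural plan is to factor both suspensions through the cone pair and reduce the statement to the compatibility of the Hurewicz homomorphism with the connecting maps of the homotopy and homology long exact sequences. Recall that $E_n$ is built from Theorem~\ref{introexact} applied to the cone pair $(CG,G)$: since $CG$ is contractible, the boundary map $\partial\colon\overline{\pi}_{n+1}(CG,G)\to\overline{\pi}_n(G)$ is an isomorphism, and collapsing $G$ gives a digraph map $q\colon(CG,G)\to(\Sigma G,\ast)$, so that $E_n=q_\ast\circ\partial^{-1}$. The homology suspension $E'_n$ is defined by the identical recipe in GLMY path homology, using $H_\ast(CG)=0$ to obtain an isomorphism $\partial\colon H_{n+1}(CG,G)\to H_n(G)$ together with a collapse isomorphism $q_\ast\colon H_{n+1}(CG,G)\to H_{n+1}(\Sigma G)$. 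Thus it suffices to produce a relative Hurewicz homomorphism $\widetilde{H}_{n+1}\colon\overline{\pi}_{n+1}(CG,G)\to H_{n+1}(CG,G)$ commuting with both $\partial$ and $q_\ast$; a diagram chase through the two isomorphisms then yields $\widetilde{H}_{n+1}\circ E_n=E'_n\circ\widetilde{H}_n$.

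First I would construct this relative Hurewicz map. A relative class is represented by a digraph map of triples sending the designated faces of the fundamental $(n+1)$-cube into $G$ and into $\ast$; pushing the fundamental cubical chain forward and comparing to path homology via the map $H^c_\ast\to H_\ast$ of~\cite{GMJ21} gives $\widetilde{H}_{n+1}$ on $\overline{\pi}_{n+1}(CG,G)$, paralleling the absolute definition. Since both the homotopy Hurewicz-to-cubical map and the cubical-to-path comparison are natural with respect to the digraph map $q$, compatibility with $q_\ast$ is immediate: the Hurewicz map is defined by functorial pushforward of fundamental chains, and $q$ induces the collapse in both the homotopy and the homology columns.

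The substance of the argument is compatibility with the connecting homomorphisms, i.e. the square with horizontal maps $\partial$ and vertical maps $\widetilde{H}$. Here I would argue at the chain level: if a relative class is represented by $f$, then $\partial$ restricts $f$ to the appropriate free face of the cube, while on chains the connecting homomorphism in path homology takes the boundary of the pushed-forward fundamental chain. The identity to be checked is that $\partial f_\#(\iota_{n+1})$ agrees, as a path-homology cycle in $G$, with the pushforward of the fundamental $n$-chain by the restricted map, up to terms supported on degenerate faces and on the cone vertex. This is where the digraph subtleties concentrate: one must verify that the $\Omega$-invariant (regular, $\partial$-stable) chains are preserved, and that the extra faces created by the cubical interval models and by the single cone point do not contribute to the cycle in path homology.

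I expect the main obstacle to be precisely this boundary computation, together with the collapse isomorphism $q_\ast\colon H_{n+1}(CG,G)\xrightarrow{\cong}H_{n+1}(\Sigma G)$ for path homology, which has no general excision counterpart for arbitrary digraph pairs and so must be established for the cone pair directly. If instead $E_n$ and $E'_n$ are set up by an explicit ``cross with the suspension coordinate'' formula rather than through the cone pair, I would run the same computation outright: the Hurewicz image of the suspended map is the image under $\Sigma f$ of the fundamental $(n+1)$-cube, which decomposes as the suspension coordinate times $f_\#(\iota_n)$, and this is exactly $E'_n$ applied to $\widetilde{H}_n(\alpha)$. Either way, the crux is the chain-level identity relating the boundary of the suspended fundamental cube to the original fundamental cube in path homology.
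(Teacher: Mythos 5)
Your overall strategy matches the paper's: both factor $E_n$ and $E'_n$ through the cone pair, introduce a relative Hurewicz homomorphism, and reduce the theorem to (i) naturality of the Hurewicz map under a map of pairs and (ii) its compatibility with the connecting homomorphisms of the two long exact sequences. The paper organizes this as two glued squares: Proposition~\ref{ccompa} compares $E_n$ with the \emph{cubical} homology suspension $E^c_n$ via the relative Hurewicz map $H_n\colon\overline{\pi}_n(G,A)\to H^c_n(G,A)$, whose compatibility with $\partial_{n+1}$ and $\xi^c_{n+1}$ is an explicit face-map computation (Proposition~\ref{commut}); Proposition~\ref{hhcom} then shows the comparison map $L_n\colon H^c_n\to H_n$ intertwines $E^c_n$ with $E'_n$. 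Your plan to run the boundary computation directly in path homology is feasible in principle but would force you to track how the fundamental class $\omega_{n+1}$ of the cube behaves under the path boundary, which is exactly what the intermediate cubical step is designed to package; the two-stage route buys a much cleaner connecting-map calculation.

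There is, however, one step in your setup that fails as written. You describe the second leg of $E_n$ as a collapse map $q\colon(CG,G)\to(\Sigma G,\ast)$ obtained by ``collapsing $G$''. In this theory $\Sigma G$ is \emph{not} $CG/G$; it is the union $C^{+}G\cup_{G}C^{-}G$, there is no quotient digraph construction, and no digraph map collapses $G$ inside $CG$ onto the basepoint of $\Sigma G$. The correct second leg is the inclusion of pairs $(C^{+}G,G)\hookrightarrow(\Sigma G,C^{-}G)$, and $E_n$ has a third factor $j_{n+1}^{-1}$, where $j_{n+1}\colon\overline{\pi}_{n+1}(\Sigma G)\to\overline{\pi}_{n+1}(\Sigma G,C^{-}G)$ is inverted using contractibility of $C^{-}G$. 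This is not a cosmetic difference: $j_{n+1}$ is defined through the mapping path digraph and the duality isomorphism $\Phi_{n+1}$, so it is not a priori induced by a map of pairs, and the paper needs Lemma~\ref{induce1} to show it agrees with the map induced by $(\Sigma G,x_0)\to(\Sigma G,C^{-}G)$ before naturality of the Hurewicz map can be invoked on that leg. Your proposal omits this factor entirely and therefore misses the corresponding square of the diagram. You do correctly anticipate that the isomorphism $H_{n+1}(C^{+}G,G)\to H_{n+1}(\Sigma G,C^{-}G)$ has no excision argument behind it and must be verified directly for the cone pair, which is the content of Proposition~\ref{susiso}.
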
 

It should be noted that the mechanics in carrying this out are much more complex than in the case of topological spaces, and stems from the delicate construction of $\overline{\pi}_n(G)$. One manifestation of this is that it is not known if the Hurewicz homomorphism is an isomorphism in degree $n$ if the digraph homotopy groups $\pi_{m}(G)$ are all zero for $m<n$. 

However, the existence of the commutative diagram in Theorem~\ref{intropcompa} can be used to great effect in addressing the question of whether any higher digraph homotopy groups can be calculated. Currently, only one example of a nontrivial digraph homotopy group is known. In~\cite[Example 4.22]{LWYZ24} it was shown there is a digraph $G$ such that $\overline{\pi}_2(G)\neq 0$, but what the actual group is remains unknown. We use Theorem~\ref{intropcompa} together with the isomorphism $\overline{\pi}_1(G)\cong\pi_1(G)$ and the Hurewicz isomorphism 
$\pi_1(G)_{ab}\rightarrow H_1(G)$~\cite{GLMY15} from the abelianization of $\pi_1(G)$ in order to prove the following. 

\begin{thm}\label{introexistence} 
For every $n\geq 1$ and $k\geq 1$ there are digraphs $G_n$, $G^k_n$, $G_n(2)$ and $G_n(3)$ such that: 
\begin{itemize} 
   \item $\overline{\pi}_n(G_n)$ has a $\mathbb{Z}$-summand; 
   \item $\overline{\pi}_n(G^k_n)$ has a $\mathbb{Z}^{\oplus k}$-summand; 
   \item $\overline{\pi}_n(G_n(2))$ has a $\mathbb{Z}/2\mathbb{Z}$-summand; 
   \item $\overline{\pi}_n(G_n(3))$ has a $\mathbb{Z}/3\mathbb{Z}$-summand. 
\end{itemize} 
\end{thm}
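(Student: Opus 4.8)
The plan is to reduce everything to level~$1$, where the isomorphism $\overline{\pi}_1(G)\cong\pi_1(G)$ and the classical Hurewicz isomorphism $\pi_1(G)_{ab}\cong H_1(G)$ are available, and then to push the relevant summand up to level~$n$ by iterating the suspension. Concretely, I would set $G_n=\Sigma^{n-1}G$ for a suitably chosen base digraph $G$, and similarly for the other families, so that the only genuinely new work is choosing $G$ and checking that a summand of $H_1(G)$ is detected in $\overline{\pi}_n(\Sigma^{n-1}G)$. The engine is Theorem~\ref{intropcompa}: stacking the commutative square horizontally $n-1$ times, and using that the path-homology suspension $E'_m$ is an isomorphism for $m\geq 1$, gives $H_n(\Sigma^{n-1}G)\cong H_1(G)$ together with a commutative rectangle relating the iterated suspensions $E=E_{n-1}\circ\cdots\circ E_1$ and $E'=E'_{n-1}\circ\cdots\circ E'_1$, namely $\widetilde{H}_n\circ E=E'\circ\widetilde{H}_1$.

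For the free summands I would first show, by induction on $n$, that the Hurewicz homomorphism $\widetilde{H}_n\colon\overline{\pi}_n(\Sigma^{n-1}G)\to H_n(\Sigma^{n-1}G)$ is surjective. The base case is that $\widetilde{H}_1$ is the abelianization $\pi_1(G)\to\pi_1(G)_{ab}\cong H_1(G)$, which is onto; the inductive step follows because in $\widetilde{H}_{n+1}\circ E_n=E'_n\circ\widetilde{H}_n$ the right-hand composite is surjective ($E'_n$ an isomorphism, $\widetilde{H}_n$ surjective by hypothesis), forcing $\widetilde{H}_{n+1}$ to be surjective. Now take $G$ to be a wedge of $k$ directed triangles, for which a short path-homology computation gives $H_1(G)\cong\mathbb{Z}^{\oplus k}$; then $\widetilde{H}_n$ is a surjection onto the free abelian group $H_n(\Sigma^{n-1}G)\cong\mathbb{Z}^{\oplus k}$, which splits by projectivity. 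This produces $G^k_n=\Sigma^{n-1}G$ with a $\mathbb{Z}^{\oplus k}$-summand, and the case $k=1$ (a single directed triangle) gives $G_n$ with a $\mathbb{Z}$-summand.

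For the torsion summands surjectivity alone is not enough, since a surjection onto $\mathbb{Z}/m\mathbb{Z}$ need not split, so I would instead track an explicit element. The key construction --- and the step I expect to be the main obstacle --- is to exhibit a digraph $G$ with $\pi_1(G)\cong\mathbb{Z}/m\mathbb{Z}$ for $m=2,3$; granting this, let $\bar{x}\in\pi_1(G)\cong\overline{\pi}_1(G)$ be a generator and set $x=E(\bar{x})\in\overline{\pi}_n(\Sigma^{n-1}G)$. Since $\pi_1(G)$ is abelian, $\widetilde{H}_1$ is an isomorphism, so $\widetilde{H}_n(x)=E'(\widetilde{H}_1(\bar{x}))$ is a generator of $H_n(\Sigma^{n-1}G)\cong\mathbb{Z}/m\mathbb{Z}$; hence the order of $x$ is divisible by $m$, while $m\bar{x}=0$ forces $mx=E(m\bar{x})=0$, so $x$ has order exactly $m$. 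Then $\langle x\rangle\cong\mathbb{Z}/m\mathbb{Z}$ is carried isomorphically onto $H_n(\Sigma^{n-1}G)$ by $\widetilde{H}_n$, which yields a section of $\widetilde{H}_n$ and therefore a splitting $\overline{\pi}_n(\Sigma^{n-1}G)\cong\mathbb{Z}/m\mathbb{Z}\oplus\ker\widetilde{H}_n$. Setting $G_n(m)=\Sigma^{n-1}G$ gives the required $\mathbb{Z}/m\mathbb{Z}$-summand.

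The formal part of the argument --- the induction on surjectivity and the order computation --- is routine diagram chasing once Theorem~\ref{intropcompa} and the homology suspension isomorphism are in hand. The real content lies in the base digraphs: verifying that the directed triangle (and its wedges) has $H_1\cong\mathbb{Z}$ (resp. $\mathbb{Z}^{\oplus k}$), and especially realizing $\mathbb{Z}/2\mathbb{Z}$ and $\mathbb{Z}/3\mathbb{Z}$ as digraph fundamental groups. The latter is where I would concentrate the effort, either by writing down explicit small digraphs or by appealing to a realization result for digraph fundamental groups, and then confirming via the Hurewicz isomorphism that $H_1$ of these digraphs is indeed $\mathbb{Z}/m\mathbb{Z}$.
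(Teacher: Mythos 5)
Your overall strategy is the same as the paper's: iterate the square of Theorem~\ref{intropcompa}, use that the GLMY homology suspension $E'_m$ is an isomorphism (Proposition~\ref{susiso}) and that $\widetilde{H}_1$ identifies $\overline{\pi}_1(G)_{ab}$ with $H_1(G)$, and conclude that a summand of $H_1(G)$ survives into $\overline{\pi}_n(\Sigma^{n-1}G)$. The paper packages this slightly differently --- it assumes $\pi_1(G)$ abelian from the outset, so that the anticlockwise composite $E'_{n-1}\circ\cdots\circ E'_1\circ\widetilde{H}_1$ is an isomorphism and hence the iterated homotopy suspension is a split injection; this handles the free and torsion cases uniformly, whereas you split into a surjectivity-plus-projectivity argument for the free case and an element-tracking argument for the torsion case. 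Both of your formal arguments are sound (and your free case has the minor virtue of not needing $\pi_1(G)$ abelian). For the free summands the paper uses the cyclic square $0\to1\to2\to3\to0$, which has $H_1\cong\mathbb{Z}$ by~\cite[Example 2.8]{GLMY15}, and its $k$-fold box power together with the K\"unneth formula of~\cite{GMY17} to get $\mathbb{Z}^{\oplus k}$; your wedge of directed triangles would also work, but you would additionally need to verify $H_1$ of the cyclic triangle and a wedge-sum formula for path homology, neither of which the paper has to invoke.

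The genuine gap is in the torsion cases, and it sits exactly where you predicted: you never produce the base digraphs. Your argument for a $\mathbb{Z}/m\mathbb{Z}$-summand requires a digraph $G$ with $\pi_1(G)\cong\mathbb{Z}/m\mathbb{Z}$ (or at least with $H_1(G)\cong\mathbb{Z}/m\mathbb{Z}$ and a class of order exactly $m$ in $\overline{\pi}_1(G)$ hitting a generator), and this is not something that can be supplied by routine effort or by a general realization theorem --- no such realization result is available, and the difficulty is underscored by the paper's remark that no digraph with a $\mathbb{Z}/m\mathbb{Z}$-summand in $H_1$ is known for any $m\geq 4$. The paper closes this gap by importing the explicit digraphs of~\cite[Figure 5]{CHY22} with $H_1\cong\mathbb{Z}/2\mathbb{Z}$ and $\mathbb{Z}/3\mathbb{Z}$ (whence, their fundamental groups being abelian, $\pi_1\cong H_1$ via the Hurewicz isomorphism of~\cite{GLMY15}). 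Without that external input, the third and fourth bullets of the theorem remain unproved in your proposal; with it, your element-tracking argument goes through and is essentially the paper's argument in disguise.
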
 

In particular, there are nontrivial digraph homotopy groups for all $n$, they can have arbitrarily large rank, and they can have torsion. The restriction to $2$ and $3$ torsion is due to the remarkable fact that as yet no examples of digraphs are known for which $H_1(G)$ has a $\mathbb{Z}/m\mathbb{Z}$-summand for $m\geq 4$.

\section{Preliminaries}\label{sec:prelim}
In this section we review basic definitions and notations, based on the homology theories~\cite{GLMY12,GMJ21} and homotopy theory of digraphs
introduced by Grigor'yan, Lin, Muranov and Yau~\cite{GLMY15}.

\subsection{Basic definitions}
A \emph{digraph (directed graph)} $G=(V_{G}, A_{G})$ consists of a vertex set $V_{G}$ and an arrow
set $A_{G} \subset\{V_{G} \times V_{G} \backslash \operatorname{diag V_{G}}\}$ of ordered pairs of vertices,
where $\operatorname{diag V_{G}}$ is the diagonal set $\{(v,v)|v\in V_{G}\}$.
 Here, $(v, w) \in A_{G}$ is denoted by $v \rightarrow w$. For a fixed vertex $x_0$ in $G$, we say $G$ is a \emph{based digraph} with basepoint $x_0$. In what follows, we consider a digraph $G$ with basepoint~$x_0$.~\smallskip

Key examples of digraphs are line digraphs. For $n \geq 0$, let $I_n$ be the digraph whose
vertex set is $\{0,1,...,n\}$
and arrow set contains exactly one of the arrows $i \rightarrow(i+1)$ or $(i+1) \rightarrow i$
for any $i = 0, 1,...,n-1$. Such a digraph~$I_n$ is called a \emph{line digraph} of length $n$. A \emph{standard line digraph} $J_n$ is a line digraph of length $n$ with $i\rightarrow i+1$ if $i$ is even or 0, and $i\leftarrow i+1$ if $i$ is odd.
\smallskip

A \emph{digraph map} $f: G\rightarrow H$ is a map $f: V_G \rightarrow V_H $ such that for any
arrow $v\rightarrow w$ in $G$, either $f(v)\rightarrow f(w)$ or $f(v)= f(w)$.
\smallskip

A digraph $A$ is a \emph{subdigraph} of a digraph $G$, denoted by $A\subset G$, if $V(A)\subseteq V(G)$ and $E(A)\subseteq E(G)$. If $A$ is a subdigraph of a digraph $G$ then $(G,A)$ is a \emph{digraph pair}. A map of pairs
$f: (G,A)\rightarrow (H,B)$
is a digraph map
\(f: G\rightarrow H\)
such that
$f(A)\subset B$. More generally, if $C$ is also a subdigraph of~$A$ then $(G,A,C)$ is a
\emph{digraph triple}. A map of triples
$f: (G,A,C)\rightarrow (H, B,D)$
is a digraph map \(f: G\rightarrow H\)
such that $f(A)\subset B$ and $f(C)\subset D.$ In particular, if $C=A$ then $(G, A,A)$ is the
same as the digraph pair $(G,A)$.
\smallskip

If $G$ is a based digraph with basepoint $x_{0}$ then a \emph{digraph path} is a relative digraph map $\gamma: (I_n,n) \rightarrow (G,x_0)$. In the special case of a standard line digraph, a path
$\gamma: (J_n,n) \rightarrow (G,x_0)$ is called a \emph{standard path}.
\smallskip

Given a line digraph $I_{n}$, its \emph{boundary} $\partial I_{n}$ is the discrete subdigraph with vertices $\{0,n\}$ and no arrows, that is, $\partial I_{n}$ consists of the start and end vertices of $I_{n}$.
\smallskip

Let $G =(V_G, A_G)$ and $H=(V_H, A_H)$ be two digraphs.
 The \emph{box product} $G\Boxx H$ is the digraph whose vertex
set is $V_{G}\times V_{H}$ and whose arrow set consists of the arrows $(v,w)\rightarrow (v',w')$ in the cases when $v=v'$ and $w\rightarrow w'$, or $v\rightarrow v'$ and $w=w'$. The \emph{relative box product}
$(G,A)\Boxx (H,C)$ of two pairs is the pair $(G\Boxx H, A\Boxx H\cup G\Boxx B)$.
For pairs $(I_{m_{i}},\partial I_{m_{i}})$ for $1\leq i\leq n$, let $(I_{m_{i}},\partial I_{m_i})^{\Box n}$ be the relative box product  $$(I_{m_{1}},\partial I_{m_1})\Boxx (I_{m_{2}},\partial I_{m_2})\Boxx \cdots\Boxx (I_{m_{n}},\partial I_{m_n}), \quad\quad \quad n\geq 0.$$ If $n=0$ then, by convention, $(I_{m_{i}},\partial I_{m_i})^{\Box 0}$ is a discrete vertex. 
A digraph map
$f:(I_{m_{i}},\partial I_{m_i})^{\Box n}\rightarrow (G,A)$
is sometimes referred to as a \emph{grid digraph map}, and in the case of standard
line digraphs, a digraph map
$f:(J_{m_{i}},\partial J_{m_i})^{\Box n}\rightarrow (G,A)$
is sometimes referred to as a \emph{standard grid digraph map}.

\subsection{ Homotopies for Digraph Maps.} Two kinds of homotopy between digraph maps
were introduced in \cite{GLMY15}. The first homotopy is defined for arbitrary digraph maps while
the second one is defined only for paths. 


\begin{defn}Let $f,\text{ } g: (G, A)\rightarrow (H, B)$ be a map of digraph pairs. We
say that $f$ is \emph{homotopic} to $g$, denoted by $f\simeq g$, if there is a
line digraph~$I_n$ and a digraph map $F:(G,A)\,\Box\, I_n\rightarrow (H,B)$ such that $F|_{(G,A)\Box\{0\}} = f$,
$F|_{(G,A)\Box\{n\}} = g$, and $F|_{A\Box\{i\}}= f|_{A}=g|_{A}$ for all $0\leq i\leq n.$
If $n=1$, we say $f$ is \emph{direct homotopic} to $g$ and write $f\rightrightarrows g$.
\end{defn}

A digraph map $h:I_n\rightarrow I_m$ is called a \emph{shrinking map} if $h(0)=0$, $h(n)=m$ and
$h(i)\leq h(j)$ if $i\leq j$, that is, $h$ is a surjective digraph map preserving vertex order.

\begin{example}\label{shrink}Let $I_3 $ be $\xymatrix@C=0.5cm{ 0\ar[r]& 1& 2 \ar[r]\ar[l]&3}$ and $I_2$ be $\xymatrix@C=0.5cm{0\ar[r]& 1&2\ar[l]}.$ A shrinking map $h:I_3\rightarrow I_2$ is defined by $h(0) = 0$, $h(1)=1$ and $h(2)=h(3) = 2$.
\begin{center}
\begin{tikzpicture}

  \draw[->, red, thick] (1,1.5) -- (2.4,1.5);
   \draw[<-, red, thick] (2.6,1.5) -- (3.9,1.5);
    \draw[->, red, thick] (4.1,1.5) -- (5.4,1.5);
\draw[->, blue, thick] (1,0) -- (2.4,0);
   \draw[<-, blue, thick] (2.6,0) -- (3.9,0);
   \draw[->, green, thick] (1,1.5) -- (1,0.1);
   \draw[->, green, thick] (2.5,1.5) -- (2.5,0.1);
   \draw[->, green, thick] (4,1.5) -- (4,0.1);
   \draw[->, green, thick] (5.5,1.5) -- (4,0.07);

  \node[above, black] (2) at (4,1.5) {2};
   \node[above, black] at (1,1.5) {0};
    \node[above, black] at (2.5,1.5) {1};
     \node[above, black] at (5.5,1.5) {3};
     \node[below, black] at (4,0) {2};
   \node[below, black] at (1,0) {0};
    \node[below, black] at (2.5,0) {1};

\filldraw (1,1.5) circle (.04)
(2.5,1.5) circle (.04)
(4 ,1.5) circle (.04)
(5.5,1.5) circle (.04)
(1,0) circle (.04)
(2.5 ,0) circle (.04)
(4,0) circle (.04);
\end{tikzpicture}
\end{center}
\end{example}
A relative digraph map $h:(I_{M_{i}},\partial I_{M_i})^{\Box n}\rightarrow (I_{m_{i}},\partial I_{m_i})^{\Box n}$ is called an \emph{$n$-dimensional shrinking map} if it is a box product of shrinking maps, that is, $h = h_1\Boxx h_2\Boxx \cdots \Boxx h_n $, where $h_i: (I_{M_i},\partial I_{M_i})\rightarrow (I_{m_i},\partial I_{m_i})$ is a shrinking map for $1 \leq i\leq n$. In particular, if $n=1$, a $1$-dimensional shrinking map is exactly a shrinking map defined in~\cite{GLMY15}. For convenience, we will omit the dimension of the shrinking map, simply calling it a shrinking map.

Let $f:(I_{m_{i}},\partial I_{m_i})^{\Box n}\rightarrow (G,x_0)$ be a relative digraph map. If $h:(I_{M_{i}},\partial I_{M_i})^{\Box n}\rightarrow (I_{m_{i}},\partial I_{m_i})^{\Box n}$ is a shrinking map then we call the composite $\overline{f}= f\circ h:(I_{M_{i}},\partial I_{M_i})^{\Box n}\rightarrow (G,x_0)$ a \emph{subdivision} of $f$.
\medskip

\begin{defn}~\cite{LWYZ24}
Let $f: (I_{m_{i}},\partial I_{m_i})^{\Box n}\rightarrow (G,x_0)$ and $g:(I_{n_{i}},\partial I_{n_i})^{\Box n}\rightarrow (G,x_0)$ be relative digraph maps. We call $f $ \emph{one-step $F$-homotopic} to $g$ if there exist subdivisions $\overline{f}$ and $\overline{g}$ of $f$ and $g$ respectively such that $\overline{f}\dr \overline{g}$, denoted by $f\simeq_{1}g$. In this case, we say $g$ is \emph{one-step inverse $F$-homotopic} to $f$ and write $g\s_{-1}f$.

 More generally, $f$ is \emph{$F$-homotopic} to $g$ if there is a finite sequence $\{f_i\}_{i=0}^l$ such that $f_{0}=f$, $f_{l}=g$ and there are one-step $F$-homotopies $f_i\s_{1}f_{i+1}$ or $f_i\s_{-1}f_{i+1}$ for $0\leq i<l$.
\end{defn}

By~~\cite[Remark 4.4]{LWYZ24}, any grid digraph map $f: (I_{m_{i}},\partial I_{m_i})^{\Box n}\rightarrow (G,x_0)$ 
is \mbox{$1$-step} $F$-homotopic to a standard grid digraph map $\overline{f}:(J_{M_i},\partial J_{M_i})^{\Box n}\rightarrow (G,x_0)$.  In what follows, this lets us always consider standard path and standard grid digraph maps.
\medskip 

\subsection{Digraph homotopy groups} 
First consider Hom sets of standard grid digraph maps. Let
$$Hom((J_{m_i},\partial J_{m_i})^{\Box n};(G,x_0))=\{f:(J_{m_i},\partial J_{m_i})^{\Box n}\rightarrow (G,x_0)\mid\mbox{$f$ is a digraph map}\}.$$
Let $(\mathbb{Z}^{\times n},\leq)$ be the poset where $(m_{1},\ldots,m_{n})\leq (s_{1},\ldots,s_{n})$ if and only if \mbox{$m_{i}\leq s_{i}$} for all $1\leq i\leq n$.
By~\cite{Mac,Maunder} there is a directed system $$\{Hom((J_{m_i},\partial J_{m_i})^{\Box n};(G,x_0)); l^{S}_{M} \}_{M\leq S}$$ where $$l^{S}_{M}:Hom((J_{m_i},\partial J_{m_i})^{\Box n};(G,x_0))\rightarrow Hom((J_{s_i},\partial J_{s_i})^{\Box n};(G,x_0))$$
sends $f$ to the map $\overline{f}$ defined by
$$\overline{f}(i_{1},\ldots,i_{n})=\left\{\begin{array}{ll}
    f(i_{1},\ldots,i_{n}), & \mbox{if $(i_{1},\ldots,i_{n})\leq M$} \\
    f(m_{1},\ldots,m_{n}), & \mbox{otherwise}. \end{array}\right.$$
The direct limit of $\{Hom((J_{m_i},\partial J_{m_i})^{\Box n};(G,x_0)); l_{M}^{S}\}_{M\leq S}$ is
$$\lim\limits_{\rightarrow}Hom((J_{m_i},\partial J_{m_i})^{\Box n};(G,x_0))= \bigsqcup\limits_{m_i,\text{ }\forall i}Hom((J_{m_i},\partial J_{m_i})^{\Box n};(G,x_0))/\sim ,$$ where $f_{M}\sim f_{S}$ if and only if there exists $V\in \mathds{Z}^{\times n}$ with $M\leq V$ and $S\leq V$ such that $l_{M}^{V}(f_{M}) = l_{S}^{V}(f_{S})$ for any $f_{M}\in Hom((J_{m_i},\partial J_{m_i})^{\Box n};(G,x_0))$ and $f_{S}\in Hom((J_{n_i},\partial J_{n_i})^{\Box n};(G,x_0))$.
To simplify notation, let
$$Hom((J,\partial J)^{\Box n};(G,x_0))=\lim\limits_{\rightarrow}Hom((J_{m_i},\partial J_{m_i})^{\Box n};(G,x_0)).$$ Let $$\overline{\pi}_n(G):=[(J,\partial J)^{\Box n};(G,x_0)]$$ be the set of $F$-homotopy classes of $Hom((J,\partial J)^{\Box n};(G,x_0))$. 

For each coordinate $j$ in the $n$-fold box product there is a multiplication $\mu^{j}$ defined on the set $ \overline{\pi}_n(G)$ by concatenation of standard grid maps. More precisely, let $f: (J_{m_i},\partial J_{m_i})^{\Box n} \rightarrow (G,x_0)$ and $g:(J_{n_i},\partial J_{n_i})^{\Box n}\rightarrow (G,x_0)$ be standard grid digraph maps and let $ M_i = max\{m_i,n_i\}$. The product of $f$ and~$g$ along the $j^{th}$-coordinate is the digraph map $$ \mu^j(f, g) :(J_{M_1}\Boxx \cdots \Boxx J_{m_j+n_j+1}\Boxx \cdots \Boxx J_{M_n}, \partial (J_{M_1}\Boxx \cdots \Boxx J_{m_j+n_j+1}  \Boxx \cdots \Boxx J_{M_n})) \rightarrow (G,x_0)$$ defined by
 $$  \mu^j(f, g)  (i_1,i_2,...,i_j,...,i_n) =\left\{
                     \begin{array}{ll}
                       f(i_1,i_2,...,i_j,...,i_n), & \hbox{$i_k \leq m_k$, $1\leq k\leq n$} \\
                       x_0, & \hbox{$m_j\leq i_j$ $\&$ $\exists$ $k \neq j$, $ m_k< i_k\leq M_k$} \\
                        g(i_1,i_2,...,i_j-m_j,...,i_n), & \hbox{$m_j< i_j $ $\&$ $\forall $ $k \neq j$, $1 \leq i_k\leq m_k $}\\
                         x_0 ,& \hbox{$n_j\leq i_j$ $\&$ $\exists$ $k \neq j$, $ n_k< i_k\leq M_k$.}
                     \end{array}
                   \right
. $$
The map $\mu^{j}$ induces a multiplication on $\overline{\pi}_{n}(G)$, which will also denoted by $\mu^{j}$.
The \emph{inverse} $f_j^{-1}$ of $f:(J_{m_i},\partial J_{m_i})^{\Box n} \rightarrow (G,x_0)$ along the $j^{th}$-coordinate is defined by  $$f_j^{-1}(i_1,i_2,...,i_j,...,i_n)\mapsto f(i_1,i_2,...,m_j-i_j,...,i_n).$$ 
By~\cite[Proposition 4.7]{LWYZ24} the multiplication $\mu^{j}$ defines a group structure on $\overline{\pi}_{n}(G)$ for any $n\geq 1$, and this multiplication is independent of the coordinate~$j$. 
\begin{defn}\cite{LWYZ24}\label{ndef}
Let $G$ be a based digraph. For $n\geq1$, the \emph{$n^{th}$-homotopy group} $\overline{\pi}_n(G)$ is defined by $$\overline{\pi}_n(G):= [(J,\partial J)^{\Box n};(G,x_0)].$$
\end{defn}
There is a different definition of the $n^{th}$-homotopy group $\pi_{n}(G)$ of a digraph~$G$ introduced by Grigor'yan, Lin, Muranov and Yau~\cite{GLMY15}. It is known that $\pi_1(G)\cong \overline{\pi}_1(G)$~\cite[Corollary 4.24]{LWYZ24}, but the relationship 
between $\pi_{n}(G)$ and $\overline{\pi}_{n}(G)$ for $n\geq 2$ is still unclear.

\subsection{Homology Theories for Digraphs}\label{cubical}
There are two homology theories for digraphs that are interrelated, both of which are homotopy invariants. One is the path homology of digraphs introduced by Grigor'yan, Lin, Muranov and Yau~\cite{GLMY12} in 2012, which is now referred to as GLMY homology. The other is cubical homology theory, introduced by  Grigor'yan, Muranov and Jimenez~\cite{GMJ21} in 2021. The two homology theories are connected by a homomorphism from cubical homology to path homology~\cite[Proposition~5.4]{GMJ21}. 

 Given a digraph $G$, there is an \emph{elementary path complex} $$(\Lambda_{\ast}(G),\widetilde{\partial}_{\ast})=(\{\Lambda_n(G)\}_{n\geq 0},\{\widetilde{\partial}_n\}_{n\geq0}),$$ where $\Lambda_n(G)$ is the free abelian group generated by all words $v_0v_1v_2\cdots v_{n} $ for \mbox{$v_i\in V(G)$}, and $\widetilde{\partial}_{n}$ is the boundary operator $$\widetilde{\partial}_n : \Lambda_n(G)\rightarrow\Lambda_{n-1}(G), \quad  av_0v_1v_2\cdots v_{n}\mapsto \sum\limits_{i=0}^{n}(-1)^id_i(av_0v_1v_2\cdots v_n),$$ with $d_{i}$ defined by $$d_i:\Lambda_n(G)\rightarrow\Lambda_{n-1}(G), \quad av_0v_1v_2\cdots v_{n}\mapsto av_0v_1v_2\cdots v_{i-1}v_{i+1}\cdots v_n.$$  The word $v_0v_1v_2\cdots v_{n} $ is called an \emph{elementary $n$-path}. If adjacent vertices in an elementary \mbox{$n$-path} are same the path is called a \emph{non-regular} $n$-path. The non-regular $n$-paths form a sub-chain complex $(D_{\ast}(G),\widetilde{\partial}_{\ast})=(\{D_n(G)\}_{n\geq0},\{\widetilde{\partial}_n\}_{n\geq 0})$ of $(\Lambda_{\ast}(G),\widetilde{\partial}_{\ast})$. There is a quotient chain complex $(\Lambda_{\ast}(G)/D_{\ast}(G),\partial_{\ast})$, called the \emph{regular chain complex}. It is easy to see that the regular chain complex is independent of the arrow set of the digraph $G$. 
 
 In considering the arrow set $A(G)$ of $G$, Grigor'yan, Lin, Muranov and Yau defined a graded group $A_{\ast}(G)=\{A_n(G)\}_{n\geq0}$.  Here, an $n$-path $v_0v_1v_2\cdots v_n$ is \emph{allowed} if each $v_iv_{i+1}$ is an arrow in $G$ for \mbox{$0\leq i\leq n-1$}, and $A_n(G)$ is the free abelian group generated by all allowed $n$-paths. Notably, $A_{\ast}(G)$ is not closed under the boundary operator $\widetilde{\partial}_{\ast}$ while 
  $A_{\ast}(G)$ can be seen as a graded subgroup of $\Lambda_{\ast}(G)/D_{\ast}(G)$. With this in mind, the \emph{$\partial$-invariant chain complex} $$(\Omega_{\ast}(G),\partial_{\ast})=(\{\Omega_{n}(G)\}_{n\geq 0}, \{\partial_{n}(G)\}_{n\geq0})$$ is defined by \begin{align}\label{1}
\Omega_{n}(G) &= \{A_n(G)\cap \partial^{-1}A_{n-1}(G)\}. \tag{1}
\end{align}
In what follows, $(\Omega_{\ast}(G),\partial_{\ast})$ is denoted more simply by $\Omega_{\ast}(G).$ The $n$-dimensional path homology group $H_n(G)$ of the digraph $G$ is defined by the $n^{th}$-homology group of the $\partial$-invariant chain complex, i.e.
$$H_n(G) = ker\, \partial_n / Im\, \partial_{n+1}.$$

The cubical homology of digraphs was introduced by Grigor'yan, Muranov and Jimenez in~\cite{GMJ21}. Denote the $n$-fold box product of line digraphs $J_1$ by $J^n_1$; this is called the standard unit $n$-dimensional grid. There are face maps$$F_{i,k}: J_1^{n-1}\rightarrow J_1^{n}; \quad (x_1,x_2,\cdots,x_{n-1})\mapsto (x_1,x_2,\cdots,x_{i-1},k,x_{i+1},\cdots,x_n),$$ where $k=0$, 1, and degeneracy maps $$S_{i}:J_1^{n+1}\rightarrow J_1^{n}; \quad (x_1,x_2,\cdots,x_{n+1})\mapsto (x_1,x_2,\cdots,x_{i-1},x_{i+1},\cdots,x_{n+1}).$$ A digraph map $f: J_1^{n}\rightarrow G$ is called an \emph{$n$-dimensional singular cubical chain}, and the $\mathds{R}$-module generated by all the $n$-dimensional singular cubical chains of $G$ is called the \textit{$n$-dimensional chain group}, denoted by $C_{n}^c(G)$. The \textit{cubical boundary operator} is defined by $$\widetilde{\partial}^{c}_n:C_n^c(G)\rightarrow C_{n-1}^c(G); \quad f \mapsto \sum\limits_{i=1}^n(-1)^i d_i^c f,$$ where $$d_i^c f = f\circ F_{i,0}-f\circ F_{i,1}.$$
Moreover, if there exists a degeneracy map $S_i$ and a digraph map $g$ such that $f= g\circ S_i$, then we call $f$ degenerate. Let $D_n^c(G)$ be the free $\mathds{R}$-module generated by all degenerate $n$-dimensional singular cubical chains of $G$. Similar to the path homology group, there is a quotient cubical chain complex $(C_{\ast}^c(G)/D_{\ast}^c(G),  \partial_{\ast}^c)$, and its $n$-dimensional homology group is called the \textit{$n$-dimensional cubical homology group}, denoted by $H_n^c(G).$

To describe the relation between the two homology theories, some notation is needed. Let $P(a, b)$ be the set of all allowed paths of length $n$ from the vertex $(0,0,\cdots,0)$ to the vertex $(1,1,\cdots,1)$ in the digraph $J_1^n$. If $\alpha=\{\alpha_{k}\}_{k=0}^{m}$ is such a path, that is, if there is a sequence of arrows $\alpha_{0}\rightarrow\alpha_{1}\rightarrow\cdots\rightarrow\alpha_{m}$, observe that for each $1\leq k\leq m$, the vertex $\alpha_{k}$ is obtained from the vertex $\alpha_{k-1}$ by changing precisely one $0$ to a $1$. Suppose this change occurs in position~$i_{k}$. Let $\sigma(\alpha)$ be the inversion number of the sequence $\{i_{1},\ldots,i_{m}\}$.  Consider the $n$-path
$$
\omega_{n}=\sum_{\alpha \in P(a, b)}(-1)^{\sigma(\alpha)} \alpha
.$$
In~\cite{GMY14}, Grigor'yan, Muranov and Yau showed that $\omega_{n}$ generates $\Omega_n\left(J_{1}^n\right)$. They then constructed a homomorphism $$\iota_n: \Omega_n^c(G)\rightarrow \Omega_n(G), \quad f\mapsto f_n(w_n),$$ where $f_n: \Omega_{n}(J_1^n)\rightarrow  \Omega_{n}(G)$ is the chain map induced by digraph map $f$, and showed that $\iota_n$ is a chain map, implying that it induces a homomorphism 
\begin{equation}\label{Lndef} 
L_n: H_n^c(G)\rightarrow H_n(G). 
\end{equation} 

\section{Relative Homotopy Groups of Digraph Pairs}
The purpose of this section is to  define the relative homotopy groups of digraphs, which generalize the homotopy groups of digraphs in~\cite{LWYZ24}. We prove some initial properties and show that there is an exact sequence of relative homotopy groups.

 \begin{defn}Let $(G,A,C)$ and $(H,B,D)$ be digraph triples. The \emph{triple box product} is given by $$(G,A,C)\Box (H,B,D)=(G\Boxx H, A\Boxx H \cup G\Boxx B, G\Boxx D \cup C\Boxx H).$$
\end{defn}
Recall that the digraph triple $(G,A,A)$ is the digraph pair $(G,A)$. In this case, the definition of triple box product implies that $(G, A, A) \Boxx (H,B,B)$  is exactly the relative box product $(G,A)\Boxx(H,B)$. For simplicity, we will write $(G,A,A)$ as $(G,A)$.

To define a relative homotopy group of a digraph, we will need an $n$-dimensional grid triple $(J_{m_i}^{\Box n},\partial J_{m_i}^{\Box n},$ $ \overline{J}_{m_i}^{\Box (n-1)})$. Let us start with an example.

\begin{example} Let$(J_2,\partial J_2,2)$ and $(J_3,\partial J_3)=(J_3,\partial J_3,\partial J_3)$  be digraph triples, where $2$ is the digraph consisting of one vertex labelled $2$. Then the triple box product of
$(J_2,\partial J_2,2)$ and  $(J_3,\partial J_3)$ is
$$(J_2,\partial J_2,2) \Boxx(J_3,\partial J_3)= (J_2\Boxx J_3,\partial J_2\Boxx J_3 \cup J_2\Boxx \partial J_3, 2 \Boxx J_3\cup J_2\Boxx \partial J_3 ),$$
which is shown as follows:
$$
\xymatrix@R=3.5em{
(0,0) \ar@[red][d] \ar@[green][r]  &(1,0)\ar[d] & (2,0)  \ar@[green][l] \ar@[green][r]  \ar[d]& (3,0)\ar@[red][d] \\
(0,1) \ar[r]        & (1,1)        & (2,1)  \ar[l]  \ar[r]       & (3,1)      \\
(0,2) \ar@[red][r]\ar@[red][u]  & (1,2)\ar[u]  & (2,2)  \ar@[red][l]\ar[u]\ar@[red][r]   & (3,2). \ar@[red][u]
 }
 $$
Here, the subdigraph given by the red and green arrows is $\partial J_2\Boxx J_3 \cup J_2\Boxx \partial J_3$ and the subdigraph given by the red arrows is $J_3 \Boxx 2\cup J_2\Boxx \partial J_3$. Similarly, the triple box product $(J_{m_1}, \partial J_{m_1},m_1) \Boxx (J_{m_2}, \partial J_{m_2})$ of any digraph triples $(J_{m_1}, \partial J_{m_1}, m_1)$ and $(J_{m_2}, \partial J_{m_2})$ is a digraph triple given by a 2-dimensional grid, its boundary, and its boundary with the ``top" removed.
\end{example}

More generally, consider the $n$-fold triple box product of standard line digraphs,
\begin{multline*} (J_{m_1},\partial J_{m_1},m_1) \Boxx(J_{m_i},\partial J_{m_i})^{\Box (n-1)}= \\[2mm]
   (J_{m_i}^{\Box n},\partial J_{m_1}\Boxx J_{m_i}^{\Box (n-1)}\cup \partial J_{m_1} \Boxx J_{m_i}^{\Box (n-1)}, m_1\Boxx J_{m_i}^{\Box (n-1)} \cup  J_{m_1}\Boxx\partial J_{m_i}^{\Box (n-1)}).
 \end{multline*}
 Observe that
 $$\partial J_{m_{i}}^{\Box n}=\partial J_{m_1}\Boxx J_{m_i}^{\Box (n-1)}\cup \partial J_{m_1} \Boxx J_{m_i}^{\Box (n-1)}.$$
 Define $\overline{J}_{m_i}^{\Box (n-1)}$ by
 $$\overline{J}_{m_i}^{\Box (n-1)}=m_1\Boxx J_{m_i}^{\Box (n-1)} \cup  J_{m_1}\Boxx\partial J_{m_i}^{\Box (n-1)}.$$
Then $$(J_{m_i}^{\Box n},\partial J_{m_i}^{\Box n}, \overline{J}_{m_i}^{\Box (n-1)})=(J_{m_1},\partial J_{m_1},m_1)\Boxx (J_{m_i},\partial J_{m_i})^{\Box (n-1)}.$$  A digraph map $$f: (J_{m_i}^{\Box n},\partial J_{m_i}^{\Box n}, \overline{J}_{m_i}^{\Box (n-1)})\rightarrow (G,x_0)$$ will be called a \emph{triple grid map}. In the special case when $n=0$, we refer to $(J_{m_i},\partial J_{m_i})^{\Box 0}$ as the discrete vertex $0.$

Next, we consider a homotopy between triple grid maps by using subdivision. Observe that if $h: I_m\rightarrow I_n$ is a shrinking map then $h$ sends $\partial I_{m}$ to $\partial I_{n}$ and $m$ to $n$, so $h$ can be regarded as a map of digraph triples $(I_m,\partial I_m,m)\rightarrow (I_n,\partial I_n,n)$. Similarly, an $n$-dimensional shrinking map
$h:(I_{M},\partial I_{M})^{\Box n}\rightarrow (I_{m},\partial I_{m})^{\Box n}$ is a map of digraph triples
$(I_{M_{i}}^{\Box n},\partial I_{M_{i}}^{\Box n},\overline{I}_{M_{i}}^{\Box (n-1)})\rightarrow (I_{m_{i}}^{\Box n},\partial I_{m_{i}}^{\Box n},\overline{I}_{m_{i}}^{\Box (n-1)})$.
\begin{defn}
 Let $f: \jt\rightarrow (G,A,x_0)$ and $g:(J_{l_{i}}^{\Box n},\partial J_{l_i}^{\Box n}, $ $\overline{J}_{l_i}^{\Box(n-1)})\rightarrow (G,A,x_0)$ be triple grid maps. If there exist subdivisions $\overline{f}$ and $\overline{g}$ of $f$ and $g$ respectively such that $\overline{f}\dr\overline{g}$, then $f $ is said to be \emph{one-step $F$-homotopic rel $A$} to $g$, denoted by $f\simeq_{1}g\, (rel\ A)$.  Also, we call $g$ \emph{one-step inverse $F$-homotopic rel $A$} to $f$ and write $g\s_{-1}f (rel\ A)$.

 More generally, we say that $f$ is \emph{$F$-homotopic} to $g$, and write $f\simeq_{F} g$, if there is a finite sequence~$\{f_i\}_{i=0}^l$ such that $f_{0}=f$, $f_{l}=g$ and there are one-step $F$-homotopies rel $A$ $f_i\s_{1}f_{i+1}$ or $f_i\s_{-1}f_{i+1}$ for $0\leq i<l$.
\end{defn}

We now give a digraph triple analogue of the directed system of Hom sets discussed in Section~\ref{sec:prelim}. Let
\begin{multline*}
  Hom((J_{m_{i}}^{\Box n},\partial J_{m_i}^{\Box n},\overline{J}_{m_i}^{\Box(n-1)}); (G,A,x_0))= \\
   \{f\colon (J_{m_{i}}^{\Box n},\partial J_{m_i}^{\Box n},\overline{J}_{m_i}^{\Box(n-1)})\rightarrow (G,A,x_{0})\mid \mbox{$f$ is a digraph map}\}.
 \end{multline*}
Using the poset  $(\mathds{Z}^{\times n},\leq)$, there is a directed system
$$\{Hom(\jt;(G,A,x_0)); L_{M}^{S}\}_{M\leq S}$$
where $$L_{M}^{S}:Hom(\jt;(G,A,x_0)) \rightarrow Hom((J_{s_{i}}^{\Box n},\partial J_{s_i}^{\Box n},  \overline{J}_{s_i}^{\Box(n-1)}); (G,A,x_0))$$
sends $f$ to the map $\overline{f}$ defined by
$$\overline{f}(i_{1},\ldots,i_{n})=\left\{\begin{array}{ll}
    f(i_{1},\ldots,i_{n}) & \mbox{if $(i_{1},\ldots,i_{n})\leq M$} \\
    f(m_{1},\ldots,m_{n}) & \mbox{otherwise}. \end{array}\right.$$
The direct limit of $\{Hom(\jt;(G,A,x_0)); L_{M}^{S}\}_{M\leq S}$ is
  $$ \lim\limits_{\rightarrow}Hom(\jt;(G,A,x_0))=
\bigsqcup\limits_{m_i,\forall i} Hom(\jt;(G,A,x_0))/\sim
$$
where $f_{M}\sim f_{V}$ if and only if there exists $V\in \mathds{Z}^{\times n}$ with $M\leq V$ and $S\leq V$ such that $L_{M}^{V}(f_{M}) = L_{S}^{V}(f(S))$ for any $$f_{M}\in Hom((J_{m_i}^{\Box n},\partial J_{m_i}^{\Box n},\overline{J}_{m_i}^{\Box (n-1)});(G,A,x_0))\quad\mbox{and}\quad f_{S}\in Hom((J_{s_i}^{\Box n},\partial J_{s_i}^{\Box n},\overline{J}_{s_i}^{\Box (n-1)});(G,A,x_0)).$$ To simplify notation, let
$$Hom((J,\partial J,\overline{J}^{\Box n}),(G,A,x_0))=\lim\limits_{\rightarrow}Hom(\jt;(G,A,x_0)).$$
Write the equivalence class of $f:\jt \rightarrow (G,A,x_0)$ as $\{f\}$.

For elements $\{f\},\text{ }\{g\}\in \lim\limits_{\rightarrow}Hom(\jt;(G,A,x_0))$,
write $\{f\}\s_{1}\{g\}$ if and only if
$f\s_1 g$. Thus we do not distinguish between $\{f\}$ and $f$. Let
$$[(J,\partial J, \overline{J})^{\Box n};(G,A,x_0)]$$
be the set of $F$-homotopy classes of $Hom((J,\partial J, \overline{J})^{\Box n};(G,A,x_0))$
and write $[f]$ for the $F$-homotopy class of $f$. This will be the underlying set for the $n^{th}$-relative homotopy group $\overline{\pi}_n(G,A)$ of the digraph pair $(G,A)$.

\begin{thm}\label{relgroup} 
Let $G$ be a based digraph with basepoint $x_0$ and let $A$ be a subdigraph of $G$. If $n\geq 2$ then the set $[(J^{\Box n},\partial J^{\Box n},\overline{J}^{\Box (n-1)});(G, A, x_0)]$ forms a group with concatenation as multiplication.

\begin{proof}
Suppose there are digraph maps $$f:(J_{m_{i}}^{\Box n},\partial J_{m_{i}}^{\Box n}, \overline{J}_{m_i}^{\Box (n-1)})\rightarrow (G,A,x_0)\quad\mbox{and}\quad g: (J_{l_{i}}^{\Box n},\partial J_{l_{i}}^{\Box n}, \overline{J}_{l_i}^{\Box (n-1)})\rightarrow (G,A,x_0).$$ A multiplication can be defined along each coordinate of the $n$-fold box product, so for $2\leq j\leq n$ we will define a family of multiplications $\{\mu^j\}_{j=2}^n$
$$
  \mu^j:Hom((J,\partial J,\overline{J})^{\Boxx n};(G,A,x_0))\times Hom((J,\partial J,\overline{J})^{\Boxx n};(G,A,x_0))\rightarrow Hom((J,\partial J,\overline{J})^{\Boxx n};(G,A,x_0)).
$$
In box coordinate $j$, the input comes from the line digraphs $J_{m_{j}}$ for $f$ and $J_{l_{j}}$ for $g$. The lengths $m_{j}$ and $l_{j}$ may be different, so in order for the domain of the product $\mu^j(f, g)$ of $f$ and $g$ to be a grid we first need to extend the domains of~$f$ and $g$ to be grids. Note the extended grids will be of even length as the standard line digraphs $J_{\ell}$ are all of even length. Note also that such a subdivision does not change the homotopy type and we will show that the multiplication is independent of the representative of the homotopy class. Hence, we always assume that $m_i$ and $l_i$ are even and non-zero for $i\geq 1$. 

Let $M_i = max\{l_i,m_i\}$.
Define $\widetilde{f}_j$ and $\widetilde{g}_j$ as in ~\cite{LWYZ24}: 
$$\widetilde{f}_j:(J_{M_i}^{\Box(j-1)},\partial J_{M_i}^{\Box(j-1)},\overline{J}_{M_i}^{\Box(j-1)})\Boxx(J_{m_{j}},\partial J_{m_{j}}) \Boxx (J_{M_k}^{\Box(n-j)}, \partial J_{M_k}^{\Box(n-j)})\rightarrow (G,A,x_0) $$ is defined by \begin{align*}
    \widetilde{f}_j(i_1,i_2,...,i_j,...,i_n) &=
    \begin{cases}
       f(i_1,i_2,...,i_j,...,i_n), & \text{if } i_j \leq m_j, \\
       x_0, & \text{if } i_j > m_j
    \end{cases}
\end{align*}
and $$\widetilde{g}_j: (J_{M_i}^{\Box(j-1)},\partial J_{M_i}^{\Box(j-1)},\overline{J}_{M_i}^{\Box(j-1)})\Boxx(J_{l_{j}},\partial J_{l_{j}}) \Boxx (J_{M_k}^{\Box(n-j)}, \partial J_{M_k}^{\Box(n-j)})\rightarrow (G,A,x_0)  $$ is defined by \begin{align*}
\widetilde{g}_j(i_1,i_2,...,i_j,...,i_n) &=
    \begin{cases}
       g(i_1,i_2,...,i_j,...,i_n), & \text{if } i_j \leq l_j, \\
       x_0, & \text{if } i_j >l_j.
    \end{cases}
\end{align*}

Using $\widetilde{f}_{j}$ and $\widetilde{g}_{j}$, for $2\leq j\leq n$ define $$ \mu^j(f,g):(J_{M_i}^{\Box(j-1)},\partial J_{M_i}^{\Box(j-1)},\overline{J}_{M_i}^{\Box(j-1)})\Boxx(J_{m_{j}+l_j+1},\partial J_{m_{j}+l_j+1}) \Boxx (J_{M_k}^{\Box(n-j)}, \partial J_{M_k}^{\Box(n-j)})\rightarrow (G,A,x_0) $$
by
\begin{align*}
    \mu^j(f,g)(i_1,i_2,...,i_j,...,i_n) =
    \begin{cases}
        \widetilde{f}(i_1,i_2,...,i_j,...,i_n), & \text{if } i_j \leq m_j, \\
        \widetilde{g}(i_1,i_2,...,i_j-m_j,...,i_n), & \text{if } i_j > m_j.
    \end{cases}
\end{align*}
That is, $\mu^j(f,g) = \widetilde{f}_j\vee_j \widetilde{g}_j,$ where $\vee_j$ is concatenation along the $j^{th}$ coordinate.

Next, we check that $\mu^j$ induces a map on homotopy sets: 
\begin{multline*}
[(J^{\Box n},\partial J^{\Box n},\overline{J}^{\Box (n-1)});(G,A,x_0)]\times[(J^{\Box n},\partial J^{\Box n},\overline{J}^{\Box (n-1)});(G,A,x_0)]\rightarrow \\
[(J^{\Box n},\partial J^{\Box n},\overline{J}^{\Box (n-1)});(G,A,x_0)].
\end{multline*}
Suppose that $f\s_1 f^{'}$. Then there exist subdivisions $\overline{f}$ and $ \overline{f^{'}}$ of $f$ and $f^{'}$ respectively
such that $\overline{f}\rightrightarrows \overline{f^{'}}$. By~\cite[Corollary 4.6]{LWYZ24},
there is a common subdivision $\overline{\widetilde{f}}$ of $\widetilde{f}$ and $\overline{f}$ and a
common subdivision $ \overline{\widetilde{f^{'}}}$ of $\widetilde{f^{'}}$ and $\overline{f^{'}}$ such that $\overline{\widetilde{f}}\dr \overline{\widetilde{f^{'}}}$.
This implies that $\mu^j(\overline{\widetilde{f}},g)\dr \mu^j(\overline{\widetilde{f^{'}}},g)$. Therefore  $\mu^j(\overline{\widetilde{f}},g)$ and $\mu^j(\overline{\widetilde{f'}},g)$ are subdivisions of $\mu^j(f,g)$ and $\mu^j(f',g)$ respectively, so $\mu^j(f,g)\s_1 \mu^j(f^{'},g)$. Similarly, if $g\s_1 g^{'}$, then $\mu^j(f,g)\s_1 \mu^j(f,g^{'})$. Moreover, $\mu^j(f,g)\s_1 \mu^j(f,g^{'})\s_1 \mu^j(f^{'},g^{'})$. Hence  if $f\s_1 f^{'}$ or $g\s_1 g^{'}$, then $\mu^j(f,g)\s_1 \mu^j(f^{'},g^{'})$.
Consequently, $\mu^j$ induces a well-defined map on homotopy sets.

The unit element $e_{x_0}$ is easily checked to be the constant digraph map $J_2^{\Box n}\rightarrow x_0$. Associativity and the existence of inverses follow as in the proof of~\cite[Proposition 4.7]{LWYZ24}. 
\end{proof}
\end{thm}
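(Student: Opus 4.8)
The plan is to transfer the construction of the group structure on the absolute groups $\overline{\pi}_n(G)$ from \cite[Proposition 4.7]{LWYZ24} to the triple setting, the only new ingredient being the need to keep track of the relative data throughout. In the triple $(J_{m_i}^{\Box n},\partial J_{m_i}^{\Box n},\overline{J}_{m_i}^{\Box(n-1)})$ it is the first coordinate that is distinguished: the face $\{i_1=0\}$ is free to map into $A$, while the opposite face $\{i_1=m_1\}$ together with every face in the coordinates $2,\ldots,n$ make up $\overline{J}_{m_i}^{\Box(n-1)}=m_1\Boxx J_{m_i}^{\Box(n-1)}\cup J_{m_1}\Boxx\partial J_{m_i}^{\Box(n-1)}$ and are forced to $x_0$. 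For this reason I would only concatenate along a coordinate $j$ with $2\le j\le n$, and this is precisely where the hypothesis $n\ge 2$ enters: one needs at least one coordinate along which multiplication does not disturb the distinguished first coordinate.

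Fixing such a $j$, I would define $\mu^j(f,g)$ by first extending $f$ and $g$ to a common grid via the $x_0$-collars $\widetilde f_j,\widetilde g_j$ and then concatenating in coordinate $j$, exactly as written above. The first verification is that $\mu^j(f,g)$ is again a triple grid map. Its $\{i_1=0\}$ face is obtained by gluing the $\{i_1=0\}$ faces of $\widetilde f_j$ and $\widetilde g_j$ and so still maps into $A$; meanwhile the two extreme faces in coordinate $j$, and the inserted collar, all lie in $J_{m_1}\Boxx\partial J_{m_i}^{\Box(n-1)}\subseteq\overline{J}_{m_i}^{\Box(n-1)}$ and so map to $x_0$. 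Thus the triple structure survives, and it does so precisely because $j\ge2$. Descent to $F$-homotopy classes is then the computation already displayed: given $f\simeq_{1} f'$, one uses \cite[Corollary 4.6]{LWYZ24} to choose common subdivisions compatible with both the collaring and the realising direct homotopy, obtaining $\mu^j(f,g)\simeq_{1}\mu^j(f',g)$, and symmetrically in $g$.

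It remains to check the group axioms, and here I would follow the absolute argument of \cite[Proposition 4.7]{LWYZ24} essentially verbatim: the constant map $e_{x_0}$ is a two-sided unit up to a shrinking homotopy collapsing the collar, associativity comes from a reparametrising shrinking map in coordinate $j$, and the inverse of $[f]$ is the class of the reflection $f_j^{-1}$ in coordinate $j$, nullhomotoped by the standard fold homotopy. The one genuinely new point---and the step I expect to be the main obstacle---is verifying that each of these homotopies is a homotopy \emph{of triples}: that at every stage the $\{i_1=0\}$ face still maps into $A$ and $\overline{J}_{m_i}^{\Box(n-1)}$ still maps to $x_0$. Because all of the collapsing, reparametrising and folding is carried out in the free coordinate $j\ge2$, whose extreme faces already belong to $\overline{J}_{m_i}^{\Box(n-1)}$, these homotopies leave the distinguished first coordinate untouched and are therefore automatically rel $A$; this is exactly what allows the absolute proofs to be imported. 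Finally, one may record that $\mu^j$ is independent of $j$ by an Eckmann--Hilton argument, using that $\mu^j$ and $\mu^{j'}$ commute for distinct $j,j'\in\{2,\ldots,n\}$.
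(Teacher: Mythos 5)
Your proposal is correct and follows essentially the same route as the paper: extend $f$ and $g$ to a common grid via $x_0$-collars $\widetilde f_j,\widetilde g_j$, concatenate in a coordinate $j\ge 2$, verify descent to $F$-homotopy classes via common subdivisions using \cite[Corollary 4.6]{LWYZ24}, and import the unit, associativity and inverse arguments from \cite[Proposition 4.7]{LWYZ24}. Your explicit check that concatenation in coordinate $j\ge 2$ preserves the triple structure (and that the imported homotopies are automatically rel $A$) is a useful elaboration of a point the paper leaves implicit, but it is not a different argument.
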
 
\begin{rem}\label{mujindependence} 
The multiplication constructed in the proof of Theorem~\ref{relgroup} was based on concatenation in box coordinate $j$, with $j\geq 2$. But as in the proof of~\cite[Proposition 4.7]{LWYZ24}, any choice of box coordinate $j\geq 2$ induces the same multiplication on 
$[(J,\partial J,\overline{J} )^{\Box n}; (G, A, x_0)]$. 
\end{rem}

In what follows the multiplication on
$[(J,\partial J,\overline{J})^{\Box n};(G, A, x_0)]$
will be taken to be in the second coordinate and will be denoted by ``$\cdot$".
\begin{defn}
Let $(G,A)$ be a digraph pair with basepoint $x_0$. For $n\geq 2$, the \emph{$n^{th}$-relative homotopy group} of $(G, A)$ is $$\overline{\pi}_n(G, A) = [(J,\partial J,\overline{J})^{\Box n};(G, A, x_0)].$$
\end{defn}
When $n=1$, by definition, $\overline{\pi}_1(G, A) = [(J,\partial J, \overline{J});(G, A, x_0)]. $ However, when attempting to multiply, the starting vertex of $g$ may be not same as the end vertex of $f$, so the concatenation of digraph maps $f$ and $g$ does not make sense. Hence, $\overline{\pi}_1(G, A)$ is a set, instead of group. 

We now establish several fundamental properties of relative digraph homotopy groups.

\begin{prop}\label{inducedhom} 
Any based digraph map $l\colon (G,A,x_0)\rightarrow (H,B,y_0)$ induces a morphism
$l_{n}\colon\overline{\pi}_n(G,A)\rightarrow \overline{\pi}_n(H,B)$ for $n\geq 1$. If $n\geq 2$ then
$l_{n}$ is a homomorphism.
\end{prop}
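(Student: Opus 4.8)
The plan is to define $l_n$ on representatives by post-composition, setting $l_n([f]) = [l\circ f]$, and then to check in turn that this is well-defined on triple grid maps, compatible with the direct system, invariant under $F$-homotopy, and---when $n\geq 2$---compatible with concatenation. For a triple grid map $f\colon\jt\to(G,A,x_0)$, the composite $l\circ f$ is again a digraph map, and since $l$ carries the triple $(G,A,x_0)$ into $(H,B,y_0)$ it sends $\partial J_{m_i}^{\Box n}$ into $B$ and $\overline{J}_{m_i}^{\Box(n-1)}$ to $y_0$; as the domain is unchanged, $l\circ f$ is a triple grid map with target $(H,B,y_0)$. Because the stabilization maps $L_M^S$ are defined pointwise---keeping the value $f(i_1,\ldots,i_n)$ or repeating the terminal value $f(m_1,\ldots,m_n)$---and post-composition with $l$ is also pointwise, one checks directly that $l\circ L_M^S(f)=L_M^S(l\circ f)$. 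Thus post-composition descends to a map of direct limits $Hom((J,\partial J,\overline{J})^{\Box n};(G,A,x_0))\to Hom((J,\partial J,\overline{J})^{\Box n};(H,B,y_0))$.

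For homotopy invariance it suffices to handle a single one-step $F$-homotopy rel $A$. Suppose $f\simeq_1 g$, witnessed by subdivisions $\overline{f}=f\circ h$ and $\overline{g}=g\circ k$ (for shrinking maps $h,k$) with $\overline{f}\rightrightarrows\overline{g}$. Then $l\circ\overline{f}=(l\circ f)\circ h$ and $l\circ\overline{g}=(l\circ g)\circ k$ are subdivisions of $l\circ f$ and $l\circ g$. If $F$ realizes the direct homotopy rel $A$, then $l\circ F$ is a digraph map restricting to $l\circ\overline{f}$ and $l\circ\overline{g}$ at the two ends and keeping the relative part in $B$ and the distinguished part at $y_0$ throughout, since $l$ carries $A$ to $B$ and $x_0$ to $y_0$. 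Hence $l\circ\overline{f}\rightrightarrows l\circ\overline{g}$, so $l\circ f\simeq_1 l\circ g$; iterating along the defining sequence gives $l\circ f\simeq_F l\circ g$. Therefore $l_n([f])=[l\circ f]$ is well defined, and since $l$ sends the constant map $e_{x_0}$ to $e_{y_0}$ it is a based map, establishing the claim for $n\geq 1$.

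For $n\geq 2$ it remains to show $l_n$ respects the multiplication. The product $\mu^j(f,g)=\widetilde{f}_j\vee_j\widetilde{g}_j$ is built pointwise, its values coming either from $f$, from $g$, or equalling the basepoint $x_0$. Since $l(x_0)=y_0$, post-composition commutes with the basepoint extensions $\widetilde{f}_j,\widetilde{g}_j$, so $l\circ\widetilde{f}_j=\widetilde{(l\circ f)}_j$ and similarly for $g$, and it commutes with the concatenation $\vee_j$. Consequently $l\circ\mu^j(f,g)=\mu^j(l\circ f,l\circ g)$, which yields $l_n([f]\cdot[g])=l_n([f])\cdot l_n([g])$.

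The step I expect to require the most care is homotopy invariance: one must verify that post-composing a direct homotopy rel $A$ with $l$ preserves all three relative conditions simultaneously, and that subdivision interacts correctly with post-composition. The remaining steps are pointwise verifications using only that $l$ is a map of triples.
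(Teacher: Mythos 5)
Your proposal is correct and follows essentially the same route as the paper: define $l_n$ by post-composition, observe that this is compatible with the directed system of Hom sets and with $F$-homotopy, and for $n\geq 2$ use that concatenation is defined pointwise so that $l\circ(f\cdot g)=(l\circ f)\cdot(l\circ g)$. The paper states these verifications more tersely; your version simply spells out the same checks in detail.
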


\begin{proof}
Any based digraph map $l: (G,A,x_0)\rightarrow (H,B,y_0)$ induces a morphism of directed systems of Hom sets and therefore a map of their inverse limits, that is, $l$ induces a map
$$Hom((J,\partial J,\overline{J})^{\Box n};(G,A,x_0))\rightarrow Hom((J,\partial J,\overline{J})^{\Box n};(G,A,x_0)).$$
This then passes to $F$-homotopy classes, giving a map
$$[(J,\partial J,\overline{J})^{\Box n};(G,A,x_0)]\rightarrow [(J,\partial J,\overline{J})^{\Box n};(G,A,x_0)],$$
that is, a map
$$l_{n}:\overline{\pi}_n(G,A)\rightarrow \overline{\pi}_n(H,B)$$ for any $n\geq1.$ 

Next, for any digraph grid maps $f\colon(J_{m_{i}}^{\Box n},\partial J_{m_{i}}^{\Box n}, \overline{J}_{m_i}^{\Box (n-1)})\rightarrow (G,A,x_0)$ and $g\colon (J_{l_{i}}^{\Box n},\partial J_{l_{i}}^{\Box n}, \overline{J}_{l_i}^{\Box (n-1)})$
$\rightarrow (G,A,x_0)$, consider the string of equalities 
$$l_n(f \cdot g) = l \circ (f \cdot g ) = (l \circ f) \cdot (l \circ g)  = l_n(f )\cdot l_n(g).$$ The first and third hold by definition of $l_{n}$ while the second holds since $\cdot$ is concatenation. Thus, $l_n$ is a homomorphism for $n\geq 2$. 
\end{proof}

\begin{prop}\label{functoriality} For any integer $n\geq 2$, $\overline{\pi}_n: \mathcal{D}ig_{\ast} \rightarrow \mathcal{G}p$ is a functor from the category of based digraph pairs to the category of groups.
\begin{proof}
Arguing as for the homomorphism property in the proof of Proposition~\ref{inducedhom} shows that  $\overline{\pi}_n(\phi \circ \psi) = \overline{\pi}_n(\phi)\circ \overline{\pi}_n(\psi)$ and $\overline{\pi}_n (id_G) = id_{\overline{\pi}_n(G)}$ for all $n\geq 1$.
\end{proof}
\end{prop}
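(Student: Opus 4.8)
The plan is to verify the two functor axioms, since the object- and morphism-level assignments are already in hand. On objects, Theorem~\ref{relgroup} shows that $\overline{\pi}_n(G,A)$ carries a group structure for every based digraph pair $(G,A)$ whenever $n\geq 2$. On morphisms, Proposition~\ref{inducedhom} supplies, for each based digraph map $l\colon(G,A,x_0)\rightarrow(H,B,y_0)$, a group homomorphism $l_n\colon\overline{\pi}_n(G,A)\rightarrow\overline{\pi}_n(H,B)$. So it remains only to check compatibility with composition and with identities.

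The key point I would emphasise is that the induced map $l_n$ is given explicitly by post-composition: on Hom sets it sends $f$ to $l\circ f$, and this descends to $F$-homotopy classes by the well-definedness established in the proof of Proposition~\ref{inducedhom}, so that $l_n([f])=[l\circ f]$. Granting this, both axioms reduce to elementary facts about composition of digraph maps. For composition, given based digraph maps $\psi\colon(G,A,x_0)\rightarrow(H,B,y_0)$ and $\phi\colon(H,B,y_0)\rightarrow(K,C,z_0)$, I would compute, for a representative triple grid map $f$,
$$(\phi\circ\psi)_n([f])=[(\phi\circ\psi)\circ f]=[\phi\circ(\psi\circ f)]=\phi_n([\psi\circ f])=\phi_n(\psi_n([f])),$$
where the middle equality is associativity of composition of digraph maps. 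For the identity, $(id_G)_n([f])=[id_G\circ f]=[f]$, so $(id_G)_n=id_{\overline{\pi}_n(G,A)}$.

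There is no real obstacle here beyond bookkeeping: everything follows once the induced morphism is recognised as post-composition, and the only substantive inputs, namely that the target is a group and that $l_n$ is a homomorphism, are supplied by Theorem~\ref{relgroup} and Proposition~\ref{inducedhom}. The one point warranting care is that $l_n$ is well-defined on $F$-homotopy classes independently of the chosen representative and its subdivision; but this is exactly what was verified in the proof of Proposition~\ref{inducedhom}, so the functor axioms then follow formally.
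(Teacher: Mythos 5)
Your proposal is correct and follows essentially the same route as the paper: the paper's proof simply invokes the fact that $l_n$ is given by post-composition (as used for the homomorphism property in Proposition~\ref{inducedhom}), and you have made that same argument explicit, with the composition and identity axioms reducing to associativity and unitality of composition of digraph maps. The only inputs needed — the group structure from Theorem~\ref{relgroup} and the well-definedness of $l_n$ on $F$-homotopy classes from Proposition~\ref{inducedhom} — are exactly the ones you cite.
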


\begin{prop}\label{map}
If $f\simeq g: (G,A,x_0)\rightarrow (H,B,y_0)$, then $\overline{\pi}_n(f)= \overline{\pi}_n(g)$ for $n\geq1$.
\end{prop}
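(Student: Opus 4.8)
The plan is to verify $\overline{\pi}_n(f)=\overline{\pi}_n(g)$ on representatives. By Proposition~\ref{inducedhom}, both induced maps send the class of a triple grid map $\phi\colon\jt\to(G,A,x_0)$ to $[f\circ\phi]$ and $[g\circ\phi]$ respectively, so it suffices to prove that $f\circ\phi$ and $g\circ\phi$ are $F$-homotopic rel $B$ as triple grid maps into $(H,B,y_0)$, for every such $\phi$.

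The homotopy $f\simeq g$ supplies a line digraph $I_N$ and a digraph map $F\colon(G,A)\Boxx I_N\to(H,B)$ with $F|_{(G,A)\Boxx\{0\}}=f$, $F|_{(G,A)\Boxx\{N\}}=g$ and $F|_{A\Boxx\{i\}}=f|_A=g|_A$ for all $i$. First I would form the composite
\[
\Phi=F\circ(\phi\Boxx\mathrm{id}_{I_N})\colon J_{m_i}^{\Box n}\Boxx I_N\longrightarrow H,
\]
which is a digraph map since the box product of digraph maps is a digraph map. I would then check that $\Phi$ is a homotopy of triple grid maps between $f\circ\phi$ and $g\circ\phi$: restricting to the slices $\{0\}$ and $\{N\}$ recovers $f\circ\phi$ and $g\circ\phi$; for any $i$ the boundary $\partial J_{m_i}^{\Box n}$ lands under $\phi$ in $A$, whence $\Phi$ sends it into $B$ because $F$ carries the subpair $A\Boxx I_N$ into $B$; and the distinguished face $\overline{J}_{m_i}^{\Box(n-1)}$ is collapsed by $\phi$ to $x_0$, so, using $x_0\in A$ together with $F|_{A\Boxx\{i\}}=f|_A$, we obtain $\Phi(\overline{J}_{m_i}^{\Box(n-1)}\Boxx\{i\})=F(x_0,i)=f(x_0)=y_0$ for every $i$. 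Thus $\Phi$ keeps the boundary inside $B$ and the distinguished face fixed at $y_0$ throughout, which is exactly the rel $B$ condition for triple grid maps.

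Finally I would convert this $I_N$-homotopy into an $F$-homotopy. Writing $\Phi_i=\Phi|_{J_{m_i}^{\Box n}\Boxx\{i\}}$, each $\Phi_i$ is a triple grid map into $(H,B,y_0)$ by the previous paragraph, and for each edge of $I_N$ joining $i$ and $i+1$ the restriction of $\Phi$ to $J_{m_i}^{\Box n}\Boxx\{i,i+1\}\cong J_{m_i}^{\Box n}\Boxx I_1$ is a direct homotopy $\Phi_i\dr\Phi_{i+1}$ (or $\Phi_i\dl\Phi_{i+1}$, according to the orientation of the edge) that keeps the boundary in $B$ and the distinguished face at $y_0$. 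Each such step is therefore a one-step $F$-homotopy rel $B$ with trivial subdivisions, and concatenating the $N$ steps yields $f\circ\phi\simeq_F g\circ\phi$ rel $B$. Hence $[f\circ\phi]=[g\circ\phi]$, giving $\overline{\pi}_n(f)=\overline{\pi}_n(g)$; since the argument uses no group structure, it applies verbatim to the based-set case $n=1$.

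The step I expect to be the main obstacle is the last one: passing from the single long homotopy $\Phi$ over $I_N$ to the chain of one-step $F$-homotopies that the definition of $\simeq_F$ demands. The care required is in checking that every intermediate slice $\Phi_i$ is genuinely a triple grid map and that the rel $B$ constraints persist simultaneously across all slices, so that each edge of $I_N$ produces a legitimate direct homotopy rel $B$ rather than merely a homotopy of underlying digraph maps. Once the orientation bookkeeping for the edges of $I_N$ is handled this is routine, but it is where the delicacy of the triple-grid definitions actually bites.
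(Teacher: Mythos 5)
Your proof is correct and is essentially the argument the paper intends: the paper's own proof is a one-line citation of the absolute-case argument in \cite[Proposition 4.1]{LWYZ24}, and your write-up is exactly that adaptation (compose a representative with the homotopy, observe the stationarity of $F$ on $A$ forces the boundary and distinguished-face conditions to persist on every slice, then cut the $I_N$-homotopy into one-step $F$-homotopies with trivial subdivisions). No gaps.
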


\begin{proof}
The argument in~\cite[Proposition 4.1]{LWYZ24} for the absolute case is easily adapted.
\end{proof}

\begin{cor}
If $(G, A, x_0)\s (H , B,y_0)$, then $\overline{\pi}_n(G,A)\cong \overline{\pi}_n(H,B)$ for $n\geq 1.$ 
\end{cor}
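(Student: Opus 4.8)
The plan is to deduce the corollary as a purely formal consequence of the homotopy invariance established in Proposition~\ref{map} together with the functoriality of Proposition~\ref{functoriality}, exactly as one argues for the homotopy invariance of the ordinary homotopy groups of spaces.

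First I would unwind the hypothesis $(G,A,x_0)\simeq(H,B,y_0)$. By definition of a homotopy equivalence of based digraph triples, this provides maps of triples
\[f\colon (G,A,x_0)\longrightarrow (H,B,y_0)\qquad\text{and}\qquad g\colon (H,B,y_0)\longrightarrow (G,A,x_0)\]
whose composites satisfy $g\circ f\simeq \mathrm{id}_{(G,A,x_0)}$ and $f\circ g\simeq \mathrm{id}_{(H,B,y_0)}$, where the homotopies are homotopies of maps of triples fixing the relevant subdigraph and basepoint data.

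Next I would apply the functor $\overline{\pi}_n$. Proposition~\ref{inducedhom} supplies induced morphisms $f_n=\overline{\pi}_n(f)$ and $g_n=\overline{\pi}_n(g)$, which are group homomorphisms when $n\geq 2$ and maps of based sets when $n=1$. Functoriality (Proposition~\ref{functoriality}) then gives
\[g_n\circ f_n=\overline{\pi}_n(g\circ f)\qquad\text{and}\qquad f_n\circ g_n=\overline{\pi}_n(f\circ g),\]
together with $\overline{\pi}_n(\mathrm{id})=\mathrm{id}$. Applying Proposition~\ref{map} to the homotopies $g\circ f\simeq\mathrm{id}$ and $f\circ g\simeq\mathrm{id}$ shows $\overline{\pi}_n(g\circ f)=\overline{\pi}_n(\mathrm{id})=\mathrm{id}$ and $\overline{\pi}_n(f\circ g)=\mathrm{id}$. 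Hence $g_n\circ f_n=\mathrm{id}$ and $f_n\circ g_n=\mathrm{id}$, so $f_n$ is a bijection with inverse $g_n$; for $n\geq 2$ it is moreover a group isomorphism. This yields $\overline{\pi}_n(G,A)\cong\overline{\pi}_n(H,B)$ for all $n\geq 1$, as claimed.

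The only genuinely delicate point, and the place where I would concentrate, is verifying that the homotopies coming from a homotopy equivalence of \emph{triples} are of precisely the form required to invoke Proposition~\ref{map}: the proposition is phrased for homotopies of maps of pairs $(G,A,x_0)\to(H,B,y_0)$, so I would confirm that a triple homotopy equivalence respects the subdigraph data throughout (in particular that it is rel basepoint and carries the $A$-data to the $B$-data at every stage of the homotopy). Since Proposition~\ref{map} itself is obtained by adapting~\cite[Proposition 4.1]{LWYZ24}, this compatibility is exactly the bookkeeping that the cited adaptation already controls, so no new obstacle arises and the argument is otherwise entirely formal.
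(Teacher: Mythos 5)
Your argument is correct and is precisely the one the paper intends: the corollary is stated with no written proof because it follows immediately from Proposition~\ref{inducedhom}, the functoriality of Proposition~\ref{functoriality}, and the homotopy invariance of Proposition~\ref{map}, exactly as you lay out. Your closing remark about checking that the triple homotopies respect the subdigraph data is the right point to flag, and it is indeed absorbed by the adaptation of~\cite[Proposition 4.1]{LWYZ24} cited in Proposition~\ref{map}.
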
 
\vspace{-0.8cm}~$\qqed$\medskip 

\begin{prop} \label{gp} For any based digraph pair $(G,A)$,
the group $\overline{\pi}_n(G,A)$ is abelian if $n\geq 3$.
\begin{proof}
Suppose there are $n$-dimensional grid maps $f: (J_{m_1},\partial J_{m_1}, m_1)\Boxx (J_{m_i},\partial J_{m_i})^{\Box (n-1)}  \rightarrow (G,A,x_0)$ and $g: (J_{l_1},\partial J_{l_1}, l_1) \Boxx (J_{l_i},\partial J_{l_i})^{\Box (n-1)} 
 \rightarrow (G,A,x_0)$.  
By Remark~\ref{mujindependence} and the definition of the concatenation multiplication, we have
$[\mu^3(f,0)]=[\mu^{2}(f,0)]=[f]\cdot[0] =[f]$ and $[\mu^3(0,f)]=[\mu^{2}(0,f)]=[0]\cdot[f] =[f]$.  
Therefore $$[\mu^3(f,0)]\cdot[\mu^3(0,g)] = [f]\cdot[g] =[\mu^2(0,f)]\cdot[\mu^2(g,0)] =[\mu^3(0,g)]\cdot[\mu^3(f,0)] = [g]\cdot [f] .$$
\begin{center}
\begin{tikzpicture}
    \def\l{1}
    
\foreach \x in {0,1} {
        \foreach \y in {0,1} {
            \fill[gray, opacity=0.5] (\x, \y, 0) -- (\x+1, \y, 0) -- (\x+1, \y+1, 0) -- (\x, \y+1, 0) -- cycle;
        }
    }
    \foreach \x in {0,1} {
        \foreach \y in {0,1} {
            \draw[thick] (\x*\l,\y*\l,0) -- (\x*\l+\l,\y*\l,0);
            \draw[thick] (\x*\l,\y*\l,0) -- (\x*\l,\y*\l+\l,0);
            \draw[thick] (\x*\l+\l,\y*\l,0) -- (\x*\l+\l,\y*\l+\l,0);
            \draw[thick] (\x*\l,\y*\l+\l,0) -- (\x*\l+\l,\y*\l+\l,0);

            \draw[thick] (\x*\l,\y*\l, \l) -- (\x*\l+\l,\y*\l, \l);
            \draw[thick] (\x*\l,\y*\l, \l) -- (\x*\l,\y*\l+\l, \l);
            \draw[thick] (\x*\l+\l,\y*\l, \l) -- (\x*\l+\l,\y*\l+\l, \l);
            \draw[thick] (\x*\l,\y*\l+\l, \l) -- (\x*\l+\l,\y*\l+\l, \l);

            \draw[thick] (\x*\l,\y*\l,0) -- (\x*\l,\y*\l, \l);
            \draw[thick] (\x*\l+\l,\y*\l,0) -- (\x*\l+\l,\y*\l, \l);
            \draw[thick] (\x*\l,\y*\l+\l,0) -- (\x*\l,\y*\l+\l, \l);
            \draw[thick] (\x*\l+\l,\y*\l+\l,0) -- (\x*\l+\l,\y*\l+\l, \l);
        }
    }

    \node at (0.5, 0.5, 0.5) {\textbf{f}};
    \node at (1.5, 0.5, 0.5) {\textbf{0}};
    \node at (0.5, 1.5, 0.5) {\textbf{0}};
    \node at (1.5, 1.5, 0.5) {\textbf{g}};

\draw[thick] (3.5,1.2,1.2) node[right] {=};
    \draw[->, thick] (-0.5,0,0) -- (2.5,0,0) node[right] {\textbf{y}};
    \draw[->, thick] (0,-0.5,0) -- (0,2.5,0) node[above] {\textbf{z}};
    \draw[->, thick] (0,0,-0.5) -- (0,0,2) node[above] {\textbf{x}};
\end{tikzpicture}
\begin{tikzpicture}
    \def\l{1}

    \foreach \x in {0,1} {
        \foreach \y in {0,1} {
            \fill[gray, opacity=0.5] (\x, \y, 0) -- (\x+1, \y, 0) -- (\x+1, \y+1, 0) -- (\x, \y+1, 0) -- cycle;
        }
    }

    \foreach \x in {0,1} {
        \foreach \y in {0,1} {
            \draw[thick] (\x*\l,\y*\l,0) -- (\x*\l+\l,\y*\l,0);
            \draw[thick] (\x*\l,\y*\l,0) -- (\x*\l,\y*\l+\l,0);
            \draw[thick] (\x*\l+\l,\y*\l,0) -- (\x*\l+\l,\y*\l+\l,0);
            \draw[thick] (\x*\l,\y*\l+\l,0) -- (\x*\l+\l,\y*\l+\l,0);

            \draw[thick] (\x*\l,\y*\l, \l) -- (\x*\l+\l,\y*\l, \l);
            \draw[thick] (\x*\l,\y*\l, \l) -- (\x*\l,\y*\l+\l, \l);
            \draw[thick] (\x*\l+\l,\y*\l, \l) -- (\x*\l+\l,\y*\l+\l, \l);
            \draw[thick] (\x*\l,\y*\l+\l, \l) -- (\x*\l+\l,\y*\l+\l, \l);

            \draw[thick] (\x*\l,\y*\l,0) -- (\x*\l,\y*\l, \l);
            \draw[thick] (\x*\l+\l,\y*\l,0) -- (\x*\l+\l,\y*\l, \l);
            \draw[thick] (\x*\l,\y*\l+\l,0) -- (\x*\l,\y*\l+\l, \l);
            \draw[thick] (\x*\l+\l,\y*\l+\l,0) -- (\x*\l+\l,\y*\l+\l, \l);
        }
    }

    \node at (0.5, 0.5, 0.5) {\textbf{0}};
    \node at (1.5, 0.5, 0.5) {\textbf{f}};
    \node at (0.5, 1.5, 0.5) {\textbf{g}};
    \node at (1.5, 1.5, 0.5) {\textbf{0}};

    \draw[->, thick] (-0.5,0,0) -- (2.5,0,0) node[right] {\textbf{y}};
    \draw[->, thick] (0,-0.5,0) -- (0,2.5,0) node[above] {\textbf{z}};
    \draw[->, thick] (0,0,-0.5) -- (0,0,2) node[above] {\textbf{x}};

\end{tikzpicture}
\end{center}
Hence, $[f]\cdot[g] = [g]\cdot [f].$ Thus $\overline{\pi}_n(G,A)$ is abelian for $n\geq 3.$
\end{proof}
\end{prop}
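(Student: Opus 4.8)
The plan is to run the Eckmann--Hilton argument, just as for the higher relative homotopy groups of a topological space. The only reason the bound is $n\geq 3$ rather than $n\geq 2$ is that the first box coordinate is reserved for the relative structure: it carries the subgrid $\overline{J}_{m_i}^{\Box(n-1)}$ that must map into $A$, so the concatenation multiplications $\mu^j$ are available only for $2\leq j\leq n$. Running Eckmann--Hilton requires two \emph{distinct} such coordinates, and this first occurs when $n\geq 3$, giving the coordinates $j=2$ and $j=3$.

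First I would record two facts already in hand. By Theorem~\ref{relgroup} the constant map $e_{x_0}$ is a two-sided unit for each $\mu^j$, and by Remark~\ref{mujindependence} the multiplications $\mu^2$ and $\mu^3$ induce one and the same operation ``$\cdot$'' on $\overline{\pi}_n(G,A)$. Thus the whole content of the proposition is commutativity, and it suffices to exhibit the interchange (middle-four) law relating $\mu^2$ and $\mu^3$.

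The heart of the argument is therefore geometric. Given triple grid maps, I would, after subdividing so that all the relevant lengths agree, place four maps in a $2\times2$ block occupying the corners obtained by concatenating simultaneously along coordinates $2$ and $3$. Reading this block first along coordinate $2$ and then along coordinate $3$ realizes $\mu^3(\mu^2(-,-),\mu^2(-,-))$, while reading it first along coordinate $3$ and then along coordinate $2$ realizes $\mu^2(\mu^3(-,-),\mu^3(-,-))$; since both describe the very same grid digraph map, the two are equal in $\overline{\pi}_n(G,A)$. This is exactly the configuration drawn in the two cubes above, with $f$ and $g$ on one diagonal and the constant map $x_0$ (written $0$) in the off-diagonal corners. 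Feeding units into this interchange law on the anti-diagonal then gives
$$ [f]\cdot[g]=\mu^2(\mu^3([e_{x_0}],[f]),\mu^3([g],[e_{x_0}]))=\mu^3(\mu^2([e_{x_0}],[g]),\mu^2([f],[e_{x_0}]))=[g]\cdot[f], $$
which is the desired commutativity.

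The main obstacle is the honest verification of the interchange law at the level of actual digraph maps: one must confirm that the two orders of concatenation yield identical triple grid maps (or $F$-homotopic ones after passing to a common subdivision via \cite[Corollary 4.6]{LWYZ24}), and that throughout the rearrangement the relative condition---that the first-coordinate subgrid $\overline{J}^{\Box(n-1)}$ lands in $A$ and the boundary $\partial J^{\Box n}$ goes to $x_0$---is never violated. This bookkeeping, rather than any new conceptual ingredient, is where the care in the definitions of $\widetilde{f}_j$, $\widetilde{g}_j$ and in the alignment of grid lengths is spent.
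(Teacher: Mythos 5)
Your proposal is correct and follows essentially the same route as the paper: the paper's proof is exactly the Eckmann--Hilton argument run on the two concatenations $\mu^2$ and $\mu^3$, using the common unit $e_{x_0}$, the coincidence of the induced operations from Remark~\ref{mujindependence}, and the $2\times 2$ block with $f$, $g$ on one diagonal and the constant map on the other (the two cubes in the figure) to realize the interchange.
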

Similar to topological spaces, if $n=1$ then $\overline{\pi}_1(G,A)$ is the set consisting of all the homotopy classes of paths $\gamma: (J_{m},\partial J_m, m)\rightarrow (G,A,x_{0})$ for $m\geq 2.$ If $n=2$ then $\overline{\pi}_2(G,A)$ is a group but may not abelian.  

We aim to prove that there is an exact sequence for relative digraph homotopy groups that is analogous to the exact sequence for relative homotopy groups of pairs of spaces. To do this, we introduce the relative reduced path-digraph $\overline{P}(G,A,x_0)$.

Recall the notion of subdivision class from~\cite{LWYZ24}. If $G$ is a based digraph, the \emph{minimal path} $f_{min}$
of a standard path $f:J_{m}\rightarrow G$ is given by collapsing vertex $j+1$ to vertex $j$ if $f(j)=f(j+1)$. Define an equivalence relation on all the standard paths of $G$ by saying that
two paths $f$ and $f'$ are \emph{subdivision equivalent} if the minimal path $f_{min}$ of
$f$ is the same as the minimal path $f'_{min}$ of $f'$. The subdivision class of $f$ is denoted
by $\langle f\rangle$.

\begin{defn}
Let $(G,A)$ be a based digraph pair. The \emph{relative reduced path-digraph} $\overline{P}(G,A,x_0)$ is the digraph whose vertex set consists of the subdivision classes $\langle\gamma\rangle$ of digraph maps $\gamma:(J_{m},\partial J_{m},m)\rightarrow (G,A,x_0)$, with arrow set given by the condition that $\langle\gamma\rangle\rightarrow \langle\eta\rangle$ if and only if $\gamma\s_1 \eta\,(rel A)$ or $\eta \s_{-1}\gamma\,(rel A)$. The relative reduced path-digraph is pointed, with basepoint~$\langle e_{x_0}\rangle$ is obtained from the constant digraph map $e_{x_0}:(J_2,\partial J_2,2)\rightarrow x_0.$
\end{defn}

If $A=x_{0}$, so the triple $(G,A,x_{0})$ is the same as the pair $(G,x_{0})$, then the definition of the relative reduced path-digraph reproduces the definition of the reduced path-digraph in~\cite{LWYZ24}. In this case $\overline{P}(G,A,x_{0})$ is more simply denoted as $\overline{P}G$. There is an evaluation map
\[e:\overline{P}G\rightarrow G\]
defined by $e(\langle\lambda\rangle)=\lambda(0)$.

\begin{defn}[\cite{LWYZ24}]Let $X$ be a digraph with basepoint $x_0$ and $G$ be a digraph with basepoint $\ast$. For any based digraph map $f:X\rightarrow G$, the \emph{mapping path digraph} $P_f$ is the pullback of $f$ and $e$:
$$
\xymatrix{
  P_f \ar[d]_{f'} \ar[r]_{q} \pushoutcorner
                & \overline{P}G \ar[d]^{e}  \\
  X  \ar[r]_{f}
               & G.             }
$$
More precisely, $P_f$ is the based digraph with vertex set
$$V(P_f)=\{(x,\langle\lambda\rangle)\in V(X)\times V(\overline{P}G)\mid e(\langle\lambda\rangle)=f(x)\}$$
and arrow set given by the condition that $(x,\langle\lambda\rangle) \rightarrow (x',\langle\lambda'\rangle)$ in $P_f$ if and only if one of two possibilities holds:
\begin{itemize}
    \item $x\rightarrow x'$ and $\langle\lambda\rangle \rightarrow \langle\lambda'\rangle$;
    \item $x= x'$ and $ \langle\lambda\rangle \rightarrow \langle\lambda'\rangle$.
\end{itemize}
The digraph $P_{f}$ is pointed with basepoint $(x_0,\langle e_{\ast}\rangle)$.
\end{defn}

\begin{prop}\label{iso}Let $(G,A)$ be a based
 digraph pair. The relative reduced path-digraph $\overline{P}(G,A,x_0)$ is isomorphic to the mapping path digraph $P_{i}$ of the inclusion digraph map $i:A\rightarrow G.$
\begin{proof}
Observe that if $(a,\langle\lambda\rangle)\in V(P_{i})$ then $e(\langle\lambda\rangle))=i(a)$. By definition of the evaluation map, $e(\langle\lambda\rangle)=\lambda(0)$, and as $i$ is an inclusion, $i(a)=a$. Thus $$V(P_i)=\{(a,\langle\lambda\rangle)\}\in V(A)\times V(\overline{P}G)\mid \lambda(0) = a\}.$$ Next observe that $(a,\langle\lambda\rangle)\in V(P_i)$ means that $\langle\lambda\rangle\in V(\overline{P}G)$, so we have $\lambda:(J_{m},m)\rightarrow (G,x_{0})$. The condition that $\lambda(0)=a$ therefore implies that $\lambda(\partial J_m) \subset A$. Thus $\lambda$ may be regarded as a map $\lambda: (J_{m},\partial J_{m},m) \rightarrow (G,A,x_0)$. This lets us define a map $$ \theta: P_{i}\rightarrow \overline{P}(G,A,x_0), \quad \quad \quad (a,\langle\lambda\rangle)\mapsto \langle\lambda\rangle.$$ Let us check $\theta$ is a digraph map. By definition, any arrow $(a,\langle\lambda\rangle) \rightarrow (a',\langle\lambda'\rangle)$ is in $P_i$ if and only if either
\begin{itemize}
    \item $a\rightarrow a'$ and $\langle\lambda\rangle \rightarrow \langle\lambda'\rangle$;
    \item or $a= a'$ and $ \langle\lambda\rangle \rightarrow \langle\lambda'\rangle$.
\end{itemize}
That is, $(a,\langle\lambda\rangle) \rightarrow (a',\langle\lambda'\rangle)$ in $P_i$ if and only if $ \langle\lambda\rangle \rightarrow \langle\lambda'\rangle$. Hence, $\theta$ is a digraph map. Clearly, there is also a digraph map $$\zeta: \overline{P}(G,A,x_0)  \rightarrow P_{i}, \quad \quad \quad   \langle\lambda\rangle \mapsto  (\lambda(0), \langle\lambda\rangle).$$
For any vertex $(a,\langle\lambda\rangle) \in P_i$, $$\zeta\circ \theta (a,\langle\lambda\rangle) = \zeta(\langle\lambda\rangle) = (\lambda(0), \langle\lambda\rangle) = (a,\langle\lambda\rangle) .$$ Therefore, $\zeta\circ \theta =id_{P_i}$. For any vertex $\langle\lambda\rangle \in \overline{P}(G,A,x_0),$ $$ \theta \circ \zeta ( \langle\lambda\rangle) = \theta(\lambda(0),\langle\lambda\rangle) = \langle\lambda\rangle = id_{\overline{P}(G,A,x_0)} .$$ Hence, $\theta: P_i\rightarrow \overline{P}(G,A,x_0)$ is an isomorphism between digraphs. 
\end{proof}
\end{prop}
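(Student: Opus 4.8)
The plan is to establish the isomorphism by constructing mutually inverse digraph maps $\theta\colon P_i\to\overline{P}(G,A,x_0)$ and $\zeta\colon\overline{P}(G,A,x_0)\to P_i$, so I would proceed by first identifying the vertices of $P_i$ explicitly, then defining the two maps, verifying each is a digraph map, and finally checking that the composites are identities. The starting observation is that the pullback defining $P_i$ has vertices $(a,\langle\lambda\rangle)$ with $a\in V(A)$ and $e(\langle\lambda\rangle)=i(a)$; since $i$ is an inclusion, $i(a)=a$, and since $e(\langle\lambda\rangle)=\lambda(0)$, the constraint collapses to $\lambda(0)=a$. Thus a vertex of $P_i$ is determined by $\langle\lambda\rangle$ together with the condition that its initial vertex $\lambda(0)$ lies in $A$.

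The key conceptual step is to reinterpret the data of a vertex of $P_i$ as a vertex of the relative reduced path-digraph. A vertex $\langle\lambda\rangle\in V(\overline{P}G)$ is a subdivision class of a standard path $\lambda\colon(J_m,m)\to(G,x_0)$, so automatically $\lambda(m)=x_0$; the extra pullback condition $\lambda(0)=a\in A$ then forces $\lambda(\partial J_m)\subset A$, which is precisely the condition required to regard $\lambda$ as a map of triples $(J_m,\partial J_m,m)\to(G,A,x_0)$. This is the heart of the argument: it shows that the pullback condition defining $P_i$ is exactly the relative boundary condition built into $\overline{P}(G,A,x_0)$. With this identification in hand, I would define $\theta(a,\langle\lambda\rangle)=\langle\lambda\rangle$ and $\zeta(\langle\lambda\rangle)=(\lambda(0),\langle\lambda\rangle)$, the latter being well-defined precisely because $\lambda(0)\in A$.

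To verify that $\theta$ is a digraph map, I would note that the arrow condition in $P_i$ requires in either case that $\langle\lambda\rangle\to\langle\lambda'\rangle$, so an arrow of $P_i$ always maps to an arrow of $\overline{P}(G,A,x_0)$; since both arrow conditions reduce to $\langle\lambda\rangle\to\langle\lambda'\rangle$, $\theta$ sends arrows to arrows. For $\zeta$ one uses the same reduction in the other direction together with the observation that the initial-vertex assignment $\langle\lambda\rangle\mapsto\lambda(0)$ respects the reduced-path arrow relation, since a one-step $F$-homotopy rel $A$ does not move the endpoint in $A$ in a way that breaks adjacency. Finally, the composites are checked directly: $\zeta\circ\theta(a,\langle\lambda\rangle)=(\lambda(0),\langle\lambda\rangle)=(a,\langle\lambda\rangle)$ using $\lambda(0)=a$, and $\theta\circ\zeta(\langle\lambda\rangle)=\langle\lambda\rangle$, so both composites are identities.

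The main obstacle I anticipate is not the algebra of the composites, which is routine, but making precise the claim that a vertex of $\overline{P}G$ together with the condition $\lambda(0)\in A$ genuinely yields a well-defined map of triples independent of the chosen representative of the subdivision class. One must check that the condition $\lambda(\partial J_m)\subset A$ is invariant under subdivision and under the one-step $F$-homotopy relation generating the arrows, so that $\theta$ and $\zeta$ are well-defined on subdivision classes rather than on individual paths; this is where the care in the definitions of $\overline{P}G$ and $\overline{P}(G,A,x_0)$ must be matched carefully, and it is the step I would write out most explicitly.
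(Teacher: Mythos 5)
Your proposal is correct and follows essentially the same route as the paper: identify $V(P_i)$ via the pullback condition $e(\langle\lambda\rangle)=i(a)$ collapsing to $\lambda(0)=a$, reinterpret this as the relative boundary condition so that $\lambda$ becomes a map of triples, define $\theta$ and $\zeta$ exactly as you do, and check the composites are identities. The extra care you flag about invariance of $\lambda(\partial J_m)\subset A$ under subdivision is a reasonable point to make explicit but does not change the argument.
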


Taking inspiration from topological spaces, we next explore the relationship between the absolute homotopy group $\pi_{n-1}(\overline{P}(G,A,x_0))$ and the relative homotopy group $\overline{\pi}_n(G,A)$. This first involves constructing a mapping space for digraph triples $(H,B,D)^{(G,A,C)}$.

\begin{defn}
Let $(G,A,C)$ and $(H,B,D)$ be digraph triples.
The \emph{mapping digraph triple} $(H,B,D)^{(G,A,C)} $ is the digraph whose vertex set is the elements of $Hom(\left(G,A,C\right);\left(H,B,D\right))$ and whose arrows $f\rightarrow g$ satisfy  $f(v)\rightarrow g(v)$ or $f(v)= g(v)$ for any $v\in V(G)$.
\end{defn}

Since $\jt = (J_{m_1},\partial J_{m_1},m_1)\Boxx (J_{m_i},\partial J_{m_i})^{\Boxx (n-1)}$, there is a duality map
\begin{small}
$$
  d_{m_{1}}: Hom(\jt;(G,A,x_0))\rightarrow 
    Hom(\jp;((G,A,x_0)^{(J_{m_1},\partial J_{m_1},m_1)},e_{x_0}^{J_{m_1}}))
$$
\end{small}

\noindent 
defined by $d_{m_1}(\gamma)(a_1,a_2,\dots,a_{n-1})(j)=\gamma(j,a_1,a_2,\dots,a_{n-1})$,
where $e^{J_{m_1}}_{x_0}: J_{m_1}\rightarrow x_0.$ Observe that $d_{m_1}$ is an isomorphism because it has an inverse map 
\begin{small}
$$
  d_{m_{1}}^{-1}:Hom(\jp;((G,A,x_0)^{(J_{m_1},\partial J_{m_1},m_1)},e_{x_0}^{J_{m_1}}))\rightarrow 
   Hom(\jt;(G,A,x_0))
$$
\end{small}
\noindent 
defined by $d_{m_1}^{-1}(\eta)(j,a_1,a_2,\dots,a_{n-1})=\eta(a_1,a_2,\dots,a_{n-1})(j)$. 
There is a projection map $$p_{m_1}:((G,A,x_0)^{(J_{m_1},\partial J_{m_1},0)},e_{x_0}^{J_{m_1}}) \rightarrow (\overline{P}(G,A,x_0),\langle e_{x_0}\rangle),\quad \quad \quad \lambda \mapsto \langle \lambda \rangle.$$
Define  
\begin{small} 
$$
   \phi_{m_{1}}: Hom(\jt;(G,A,x_0))\rightarrow
   Hom(\jp;(\overline{P}(G,A,x_0),\langle e_{x_0}\rangle))
   $$
\end{small} 

\noindent 
by the composite $\phi_{m_1} = p_{m_1}\circ d_{m_1}$. Taking the union of all digraph grid maps of any length gives a map 
\begin{small}
\begin{multline*}
 \phi^n  = \bigcup\limits_{m_1\geq 0}\phi_{m_1}: \bigcup\limits_{m_i\geq 0\atop i\geq 1} Hom(\jt;(G,A,x_0)) \rightarrow \\ \bigcup\limits_{m_i\geq 0 \atop\text{ }i\geq 1} Hom(\jp;(\overline{P}(G,A,x_0),\langle e_{x_0}\rangle)).
\end{multline*}
\end{small}
\begin{lem}\label{surjec}Let $(G,A)$ be a based digraph pair. For $n\geq 1$, the map $\phi^n$ is surjective.
\begin{proof}
Let $\widetilde{f}\in Hom(\jp;$ $(\overline{P}(G,A,x_0),\langle e_{x_0}\rangle))$, so $\widetilde{f}$ is a map
$\widetilde{f}: \jp\rightarrow (\overline{P}(G,A,x_0),\langle e_{x_0}\rangle)$. Let $a\in \jp$. Then $\widetilde{f}(a)$ is a map $\widetilde{f}(a): (J_{m},\partial J_{m},m) \rightarrow (G,A,x_0)$.

If $n=1$ then $(J_{m_i},\partial J_{m_i})^{\Box 0}$ is a discrete vertex and any digraph map $f: (J_m,\partial J_m,m)\rightarrow (G,A,x_0)$ is a vertex in $\overline{P}(G,A,x_0)$, so the lemma holds.
If $n=2$, the lemma holds by adjusting the length of representations in subdivision classes of paths.
%

Suppose that for $n<k$ the lemma holds. Let us check it holds for $n=k$. Given a map  $\widetilde{f}: (J_{m_{i}},\partial J_{m_{i}})^{\Box (k-2)} \Boxx (J_{m_{k-1}},\partial J_{m_{k-1}})\rightarrow (\overline{P}(G,A,x_0),\langle e_{x_0}\rangle).$ Fixing $j$ in $J_{m_{k-1}},$ let $$\widetilde{f}_j:= \widetilde{f}|_{(J_{m_{i}},\partial J_{m_{i}})^{\Box (k-2)}\Box j}:(J_{m_{i}},\partial J_{m_{i}})^{\Box (k-2)}\Boxx j\rightarrow (\overline{P}(G,A,x_0), \langle e_{x_0}\rangle).$$ Then there is a digraph map $$f_j: (J_{m_{k }^j},\partial J_{m_{k}^j},m_{k}^j)\Boxx (J_{m_{i}},\partial J_{m_{i}})^{\Box (k-2)}\Boxx j\rightarrow (G, A,x_0)$$ such that $\phi_{m_{k}^j}(f_j)=\widetilde{f}_j$ since the lemma holds for $n=k-1$. 
Similarly, for the same fixed $j$ in $J_{m_{k-1}}$, let 
$$\widetilde{f}_{j+1}:= \widetilde{f}|_{(J_{m_{i}},\partial J_{m_{i}})^{\Box (k-2)}\Box (j+1)}:(J_{m_{i}},\partial J_{m_{i}})^{\Box (k-2)}\Boxx (j+1)\rightarrow (\overline{P}(G,A,x_0), \langle e_{x_0}\rangle).$$
Then there is a digraph map $$f_{j+1}: (J_{m_{k}^{j+1}},\partial J_{m_{k}^{j+1}}, m_{k}^{j+1})\Boxx (J_{m_{i}},\partial J_{m_{i}})^{\Box (k-2)}\Boxx (j+1)\rightarrow (G,A,x_0)$$ such that $\phi_{m_{k}^{j+1}}(f_{j+1})=\widetilde{f}_{j+1}$.  

Consider $j\rightarrow j+1$,   (the discussion for $j\leftarrow j+1$ is similar). Repeat the procedure over all $a\neq a_0$ as in~\cite[Lemma 4.17]{LWYZ24} to obtain a digraph map
$$f_{j\rightarrow j+1}: (J_{M_{k}^{j+1}},\partial J_{M_{k}^{j+1}}, M_{k}^{j+1})\Boxx (J_{m_{i}},\partial J_{m_{i}})^{\Box (k-2)}\Boxx (j\rightarrow j+1)\rightarrow (G, A,x_0)$$
 such that $\phi_{M_{k}^{j+1}}(f_{j\rightarrow j+1})=\widetilde{f}_{j\rightarrow j+1},$
where $\widetilde{f}_{j\rightarrow j+1}= \widetilde{f}|_{(J_{m_{i}},\partial J_{m_{i}})^{\Box (k-2)}\Box (j\rightarrow j+1)}.$

Iterating over all $j\in J_{m_{k-1}}$, we finally obtain the digraph map $f$ asserted, where $$f:(J_{m_{k}},\partial J_{m_{k}}, m_{k} ) \Boxx (J_{m_{i}},\partial J_{m_{i}})^{\Box (k-1)}\rightarrow (G,A,x_0)$$ is the digraph map such that $\phi_{m_{k}}(f) = \widetilde{f}$. This proves the lemma.
\end{proof}
\end{lem}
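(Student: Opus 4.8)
The plan is to prove surjectivity by induction on $n$, reducing the statement to the problem of lifting a digraph map valued in subdivision classes of paths to a genuine triple grid map. Since the duality map $d_{m_1}$ is already an isomorphism, the only non-formal content is to invert the projection $p_{m_1}$, which sends an honest path $\lambda$ to its subdivision class $\langle\lambda\rangle$. Concretely, a map $\widetilde f\colon\jp\to(\overline{P}(G,A,x_0),\ex)$ assigns to each grid vertex $a$ a subdivision class $\langle\lambda_a\rangle$, and the task is to choose honest representatives $\lambda_a$, all of a common length in the path coordinate, that assemble into a single map $f\colon\jt\to(G,A,x_0)$ with $\phi_{m_1}(f)=\widetilde f$, all the while respecting the triple structure: the initial vertex of each $\lambda_a$ must land in $A$, and the subdigraph $\overline{J}_{m_i}^{\Box(n-1)}$ must collapse to $x_0$.

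For the base cases $n=1$ and $n=2$ I would argue directly. When $n=1$ the domain grid is a single vertex, so $\widetilde f$ is merely a vertex of $\overline{P}(G,A,x_0)$, that is a subdivision class, and any chosen representative path is an honest preimage. When $n=2$ the grid is the line digraph $J_{m_1}$, and $\widetilde f$ records subdivision classes $\langle\lambda_j\rangle$ along the line together with the arrows between consecutive classes; here I would subdivide the representatives to a common path length $m$ so that adjacent paths differ by a single direct homotopy rel $A$, turning the sequence into a genuine map on $(J_m,\partial J_m,m)\Boxx(J_{m_1},\partial J_{m_1})$.

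For the inductive step I would assume the claim below $k$ and treat $n=k$. Slicing the $(k-1)$-dimensional grid by fixing the last coordinate at a vertex $j\in J_{m_{k-1}}$ gives restrictions $\widetilde f_j\colon(J_{m_i},\partial J_{m_i})^{\Box(k-2)}\Boxx j\to(\overline{P}(G,A,x_0),\ex)$, each of which is lifted, by the inductive hypothesis, to a triple grid map $f_j$ with its own path length $m_k^j$. It then remains to glue adjacent slices: for each arrow $j\to j+1$ (the case $j\leftarrow j+1$ being symmetric) the relation $\langle\lambda\rangle\to\langle\lambda'\rangle$ in $\overline{P}(G,A,x_0)$ unwinds, after subdivision, to a direct homotopy rel $A$, and I would follow the gluing construction of \cite[Lemma 4.17]{LWYZ24} --- promoting the neighbouring slice lengths to a common value $M_k^{j+1}$ and realizing the comparison as the box product of the slices with the single arrow $j\to j+1$ --- to build a map over $(j\to j+1)$. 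Iterating over all $j\in J_{m_{k-1}}$ patches these maps into the desired $f$ with $\phi_{m_k}(f)=\widetilde f$.

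The main obstacle is the simultaneous length-matching. The inductively produced slices $f_j$ carry, in general, different path lengths $m_k^j$ and different internal subdivisions, so promoting them all to a common grid while keeping every comparison across an arrow realized by a single direct homotopy rel $A$ is delicate; this is precisely where the analysis of \cite[Lemma 4.17]{LWYZ24} is required, now performed rel $A$. The only genuinely new ingredient is the triple bookkeeping: throughout the common-refinement process one must preserve the conditions that $\partial J_{m_i}^{\Box k}$ maps into $A$ and that the top face $m_1\Boxx J_{m_i}^{\Box(k-1)}$ together with the cylinder $J_{m_1}\Boxx\partial J_{m_i}^{\Box(k-1)}$ --- that is, all of $\overline{J}_{m_i}^{\Box(k-1)}$ --- collapses to the basepoint $x_0$.
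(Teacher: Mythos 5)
Your proposal follows essentially the same route as the paper's proof: induction on $n$ with the $n=1$ and $n=2$ cases handled directly by choosing and length-adjusting representatives, and the inductive step carried out by slicing along the last box coordinate, lifting each slice via the inductive hypothesis, and gluing adjacent slices across an arrow $j\rightarrow j+1$ using the common-subdivision construction of~\cite[Lemma 4.17]{LWYZ24}, performed rel $A$. Your added remarks on length-matching and the triple bookkeeping correctly identify where the care is needed, but do not change the argument.
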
 

Taking direct limits of Hom sets, the map $\phi^{n}$ induces a map 
\begin{small} 
$$ 
Hom((J^{\Box n},\partial J^{\Box n},\overline{J}^{\Box (n-1)});(G,A,x_0))\rightarrow 
Hom((J,\partial J)^{\Box(n-1)};(\overline{P}(G,A,x_{0}),\langle e_{x_{0}}\rangle)). 
$$
\end{small} 

\noindent 
Taking homotopy classes then induces a map 
$$\Phi_{n}:[(J^{\Box n},\partial J^{\Box n},\overline{J}^{\Box (n-1)}),(G,A,x_0)]\rightarrow [((J,\partial J)^{\Box(n-1)},(\overline{P}(G,A,x_{0}),\langle e_{x_{0}}\rangle))],$$ 
that is, a map 
$$\Phi_n:\overline{\pi}_{n}(G,A)\rightarrow \overline{\pi}_{n-1}(\overline{P}(G,A,x_0)).$$

\begin{prop}\label{dual}
Let $(G,A)$ be a based digraph pair. For $n\geq 2$, the map $$\overline{\pi}_{n}(G,A)\stackrel{\Phi_n}{\longrightarrow} \overline{\pi}_{n-1}(\overline{P}(G,A,x_0)$$ 
is an isomorphism. 
\begin{proof}
We check properties of $\Phi_{n}$ one at a time.
\medskip

\noindent
\textit{Well-defined}.
Let $$f:\jt\rightarrow (G,A,x_0)$$ and $$g:(J_{l_i}^{\Box n},\partial J_{l_i}^{\Box n},\overline{J}_{l_i}^{\Box (n-1)})\rightarrow (G,A,x_0)$$ represent elements $\{f\}, \{g\}\in Hom((J^{\Box n},\partial J^{\Box n},\overline{J}^{\Box (n-1)});(G,A,x_0)).$ Recall that $\{f\}\s_{1} \{g\}$ or $\{f\}\s_{-1} \{g\}$ if and only if for any representatives $f$ and~$g$ of $\{f\}$ and $\{g\}$ respectively we have $f\s_1 g$ or $f\s_{-1}g$.

Suppose that $f\simeq_{F}g$. Then by definition of an $F$-homotopy there is a sequence of maps $\{f_{l}\}_{l=0}^{s}$ such that $f_{l}\s_1 f_{l+1}$ or $f_{l}\s_{-1} f_{l+1}$. By definition of $\phi^n$ and the fact that $\jt = (J_{m_1},\partial J_{m_1},m_1)\Boxx (J_{m_i},\partial J_{m_i})^{\Boxx (n-1)}$, we need to verify that $ \phi_{m_{1}}(f)\simeq_{F}\phi_{l_{1}}(g)$. If so, then $\Phi_{n}(f)\simeq_{F}\Phi_{n}(g).$ It is therefore sufficient to show that if $f\simeq_{1}g$ or $f\simeq_{-1}g$ then $ \phi_{m_1}(f)\simeq_{F}\phi_{l_1}(g)$. Let   $\widetilde{f}=\phi_{m_1}(f)$ and $\widetilde{g}=\phi_{l_1}(g).$

If $f\simeq_{1}g$, then there exist two subdivisions $$\overline{f}:(J_{M_i}^{\Box n},\partial J_{M_i}^{\Box n},\overline{J}_{M_i}^{\Box (n-1)})\rightarrow (G,A,x_0)$$ and $$\overline{g}:(J_{M_i}^{\Box n},\partial J_{M_i}^{\Box n},\overline{J}_{M_i}^{\Box (n-1)})\rightarrow (G,A,x_0)$$ of $f$ and $g$ respectively such that $\overline{f}\dr \overline{g} $. Thus $\phi_{M_{1}}(\overline{f})=\widetilde{\overline{f}}\dr \widetilde{\overline{g}}=\phi_{M_{1}}(\overline{g})$. Now let us consider the relationship between $\widetilde{\overline{g}}$ and $\widetilde{g}$ in the following three cases.

\begin{enumerate}
  \item If $\overline{g}$ is a subdivision of $g$ in the last $n-1$ coordinates, then $\widetilde{\overline{g}}$ is a subdivision of $\widetilde{g}$. Therefore $\widetilde{\overline{g}}\s_1 \widetilde{g}.$
  \item If $\overline{g}$ is a subdivision of $g$ in the first coordinate, then $\widetilde{\overline{g}}_{a}$ is a subdivision of $\widetilde{g}_a$ for any $a\in (J_{M_{i}},\partial J_{M_{i}})^{\Boxx (n-1)}$, so $\langle\widetilde{\overline{g}}_{a}\rangle = \langle\widetilde{g}_a\rangle$ in $\overline{P}(G,A,x_0)$. Hence $\widetilde{\overline{g}} = \widetilde{g}$.
  \item By the subdivision decomposition, if $\overline{g}$ is a subdivision of $g$ not only in the last $n-1$ coordinates but also in first cooordinate, then $\widetilde{\overline{g}}\s_1 \widetilde{g}$ by considering both cases above.
\end{enumerate}

In conclusion, $\widetilde{\overline{g}}\s_1 \widetilde{g}$ or $\widetilde{\overline{g}}= \widetilde{g}$.  Similarly,  $\widetilde{\overline{f}}\s_1\widetilde{f}$ or $\widetilde{\overline{f}}=\widetilde{f}$. Therefore $\widetilde{f}\s_F \widetilde{g}.$ Hence if $f\simeq_{F}g$ then $\widetilde{f}\simeq_{F}\widetilde{g}$. This implies that $\Phi_{n}$ is well-defined.
\medskip

\noindent
\textit{Surjectivity}. 
This follows form the definition of $\Phi_n$ since each $\phi_n$ is surjective by Lemma~\ref{surjec}. 
\medskip

\noindent
\textit{Injectivity}.
We only need to show that $\{f\}\simeq_{F}\{g\}$ if $[\Phi_n(\{f\})]=[\Phi_n(\{g\})]$. Since $\Phi_n(\{f\}) = \{\phi_{m_1}(f)\}$ and $\Phi_n(\{g\}) = \{\phi_{l_1}(g)\}$,
if $\Phi_n(\{f\})\simeq_{F}\Phi_n(\{g\})$, then there is a sequence of digraph maps $\{\widetilde{f}_i\}_{i=0}^{l}$ from $\phi_{m_1}(f)$ to $\phi_{l_1}(g)$ such that $\widetilde{f}_{i}\s_1\widetilde{f}_{i+1}$ or $\widetilde{f}_{i}\s_{-1}\widetilde{f}_{i+1}$ for $0\leq i\leq l-1$.

Suppose that $\widetilde{f}_{0}$ and $\widetilde{f}_{1}$ are maps
$$\widetilde{f}_0:(J_{m_{i}},\partial J_{m_i})^{\Boxx (n-1)}\rightarrow (\overline{P}(G,A,x_0), \langle e_{x_0} \rangle)$$
and $$\widetilde{f}_1:(J_{k_{i}},\partial J_{k_i})^{\Boxx (n-1)}\rightarrow (\overline{P}(G,A,x_0),\langle e_{x_0} \rangle).$$
By Lemma~\ref{surjec} there exist maps $$f_0:(J_{m_{1}},\partial J_{m_{1}}, m_{1}) \Boxx(J_{m_{i}},\partial J_{m_i})^{\Boxx  (n-1)}\rightarrow (G,A,x_0)$$ and
$$f_1:(J_{k_{1}},\partial J_{k_{1}}, k_{1}) \Boxx(J_{k_{i}},\partial J_{k_i})^{\Boxx (n-1)}\rightarrow (G,A,x_0)$$ such that
$\phi_{m_1}(f_0) = \widetilde{f}_0$ and $\phi_{k_{1}}(f_1) = \widetilde{f}_1$. To prove $\Phi_n$ is injective, we only need to prove $f_0\s_1 f_1$ if
$\widetilde{f}_{0}\s_1 \widetilde{f}_1$ and $f_0\s_{-1} f_1$ if $\widetilde{f}_{0}\s_{-1} \widetilde{f}_1$. By definition of a one-step $F$-homotopy, there are subdivisions
$\overline{\widetilde{f}}_0$ of $\widetilde{f}_{0}$ and $\overline{\widetilde{f}}_1$ of
$ \widetilde{f}_1$ by $q_0$ and $q_1$ respectively such that
$\overline{\widetilde{f}}_0 \dr \overline{\widetilde{f}}_1$,
where $$\overline{\widetilde{f}}_0, ~\overline{\widetilde{f}}_1:(J_{M_{i}},\partial J_{M_i})^{\Boxx (n-1)}\rightarrow (\overline{P}(G,A,x_0),\langle e_{x_0}\rangle).$$ 
Thus there exist maps $$\overline{f}_0: (J_{L_{1}},\partial J_{L_{1}},L_{1}) \Boxx (J_{M_{i}},\partial J_{M_i})^{\Boxx (n-1)}\rightarrow (G,A,x_0)$$
 and $$\overline{f}_1: (J_{K_{1}},\partial J_{K_{1}},K_{1}) \Boxx(J_{M_{i}},\partial J_{M_i})^{\Box (n-1)}\rightarrow (G,A,x_0)$$
such that $\phi_{L_1}(\overline{f}_0) = \overline{\widetilde{f}}_0$ and $\phi_{K_{1}}(\overline{f}_1) = \overline{\widetilde{f}}_1$. It is clear that $\overline{f}_0$ and $\overline{f}_1$ are subdivisions of $f_0$ and $f_1$ respectively.

Since $\overline{\widetilde{f}}_0 \dr \overline{\widetilde{f}}_1$, by Lemma \ref{surjec} there is a digraph map $$F: (J_{M_{1}},\partial J_{M_{1}}, M_{1}) \Boxx (J_{M_{i}},\partial J_{M_i})^{\Boxx (n-1)}\Boxx J_1\rightarrow (G,A,x_0) $$ such that $$\phi_{M_{1}}(F)|_{(J_{M_{i}},\partial J_{M_i})^{\Boxx (n-1)}\Boxx 0} = \overline{\widetilde{f}}_0$$ and $$\phi_{M_{1}}(F)|_{(J_{M_{i}},\partial J_{M_i})^{\Boxx (n-1)}\Boxx 1} = \overline{\widetilde{f}}_1.$$
Denoting $$\overline{f}_{0}':=F|_{ (J_{M_{1}},\partial J_{M_{1}},M_{1}) \Box(J_{M_{i}},\partial J_{M_i})^{\Boxx (n-1)}\Boxx 0}$$ and $$\overline{f}_1':=F|_{(J_{M_{1}},\partial J_{M_{1}},M_{1}) \Box (J_{M_{i}},\partial J_{M_i})^{\Boxx (n-1)}\Boxx 1},$$ we obtain $\overline{f}_0'\dr \overline{f}_1'$.

Next, we claim that $\overline{f}_0'$ is a subdivision of $\overline{f}_0$. For any $a\in (J_{M_{i}},\partial J_{M_i})^{\Boxx (n-1)}$ and maps  $$(\overline{f}_{0})_a: (J_{L_{1}},\partial J_{L_{1}},L_{1})\rightarrow(G,A,x_0)\quad\mbox{and}\quad(\overline{f}'_{0})_a: (J_{M_{1}},\partial J_{M_{1}}, M_1)\rightarrow(G,A,x_0) $$ satisfying $\langle(\overline{f}'_{0})_a\rangle= \langle(\overline{f}_{0})_a\rangle$, there is a common subdivision by some shrinking map $q:(J_{M_{1}},\partial J_{M_1},M_1)\rightarrow (J_{L_{1}},\partial J_{L_1},L_1)$ for all $a$. Hence $\overline{f}'_0$ is the subdivision of $\overline{f}_0$ by $q \Boxx id^{\Boxx (n-1)}$. As $\overline{f}_0$ is a subdivision of $f_0$, we have shown that $\overline{f}'_0$ is a subdivision of $f_0$. Similarly, $\overline{f}'_1$ is a subdivision of $f_1$. Thus $f_0\s_1 f_1.$ Similarly, $f_0\s_{-1} f_1$ if $\widetilde{f}_{0}\s_{-1} \widetilde{f}_1$. It follows that if $\widetilde{f} = \phi_{m_1}(f)\simeq_{F}\phi_{l_{m_1}}(g) = \widetilde{g}$, then $f\simeq_{F}g$. Hence $\Phi_n$ is injective.
\medskip

\noindent
\textit{Homomorphism}. It is sufficient to show $[\{\phi_{M_{1}}(f\cdot g)\}]= [\{\phi_{m_1}(f)\}\cdot\{\phi_{l_{1}}(g)\}],$ where $M_{1} = max\{m_{1},~l_{1}\}$.
By the definition of the multiplication,
\begin{small} 
$$
f\cdot g =\widetilde{f}\vee \widetilde{g}: (J_{M_{1}},\partial J_{M_{1}}, M_{1}) \Boxx  (J_{(m_{2}+l_2)},\partial J_{(m_{2}+l_2)})\Boxx (J_{M_i},\partial J_{M_i})^{\Boxx (n-2)}) \rightarrow  
(G,A,x_0), 
$$
\end{small} 

\noindent 
where $\widetilde{f}$ and $\widetilde{g}$ are the respective subdivisions of $f$ and $g$ obtained by extending their domains and $M_i=max\{m_i,~l_i\}$ for $3\leq i\leq n$.
Then
$$  \phi_{M_{1}}(f\cdot g)(i_2,i_3,...,i_{n})= \left\{
                     \begin{array}{ll}
                      \phi_{M_{1}}(\widetilde{f})(i_2,i_3,...,i_n), & \hbox{$i_2\leq m_2$;} \\
                        \phi_{M_{1}}(\widetilde{g})(i_2-m_2,i_3,...,i_n), & \hbox{$i_2> m_2$;}
                     \end{array}
                   \right
.$$
and
$$  (\phi_{m_1}(f)\cdot \phi_{l_{1}}(g))(i_2,i_3,...,i_{n})= \left\{
                     \begin{array}{ll}
                       (\widetilde{\phi_{m_{1}}(f)})(i_2,i_3,...,i_n), & \hbox{$i_2\leq m_2$;} \\
                       (\widetilde{\phi_{l_{1}}(g)})(i_2-m_2,i_3,...,i_n), & \hbox{$i_2> m_2$.}
                     \end{array}
                   \right
.$$
For any $(i_2,i_3,...,i_n)\in (J_{(m_{2}+l_2)},\partial J_{(m_{2}+l_2)})\Boxx (J_{M_i},\partial J_{M_i})^{\Boxx (n-2)}$, we have $$\phi_{M_{1}}(f\cdot g)(i_2,i_3,...,i_{n})=(\phi_{m_1}(f)\cdot \phi_{l_{1}}(g))(i_1,i_2,...,i_{n}).$$ Therefore $\{\phi_{M_{1}}(f\cdot g)\}= \{\phi_{m_1}(f)\}\cdot\{\phi_{l_{1}}(g)\}.$ Thus $\Phi_n(f\cdot g) = \Phi_n(f)\cdot \Phi_n(g),$ implying that $\Phi_{n}$ is a homomorphism.
\medskip

Collectively, this shows that $\Phi_{n}$ is an isomorphism.
\end{proof}
\end{prop}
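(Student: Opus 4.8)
The plan is to verify four properties of $\Phi_n$ separately: that it is well-defined, surjective, injective, and a homomorphism. The conceptual backbone is the duality isomorphism $d_{m_1}$, which already identifies the Hom set of triple grid maps into $(G,A,x_0)$ with the Hom set of $(n-1)$-dimensional grid maps into the mapping digraph triple. Composing with the projection $p_{m_1}$ sending a path to its subdivision class produces $\phi_{m_1}$, and $\Phi_n$ is the induced map on $F$-homotopy classes obtained after passing to direct limits. Since $d_{m_1}$ is a bijection on the underlying Hom sets, the entire difficulty is concentrated in controlling how subdivisions and $F$-homotopies interact with the projection $\langle - \rangle$.

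First I would establish that $\Phi_n$ is well-defined by showing that a one-step $F$-homotopy $f \simeq_1 g$ upstairs descends to $\phi_{m_1}(f) \simeq_F \phi_{l_1}(g)$ downstairs. The key is to separate a subdivision into the part affecting the first box coordinate and the part affecting the remaining $n-1$ coordinates: subdivisions in the last $n-1$ coordinates yield honest subdivisions of the image grid map, hence a one-step $F$-homotopy, whereas subdivisions purely in the first coordinate become invisible under $\langle - \rangle$, since the resulting fiberwise paths are subdivision-equivalent, leaving $\phi_{m_1}$ unchanged. Treating a general subdivision as a composite of these two types then gives the claim. Surjectivity is then immediate from Lemma~\ref{surjec}, since $\phi^n$ is already surjective on Hom sets.

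The crux is injectivity. Here I would lift a single one-step $F$-homotopy $\widetilde{f}_0 \dr \widetilde{f}_1$, obtained after passing to a common subdivision, back to a direct homotopy upstairs. Applying Lemma~\ref{surjec} to this homotopy, regarded as a grid map on the $(n-1)$-grid boxed with $J_1$, produces a lift $F$ upstairs whose two end restrictions $\overline{f}_0'$ and $\overline{f}_1'$ satisfy $\overline{f}_0' \dr \overline{f}_1'$. It then remains to identify these restrictions as subdivisions of the original lifts $f_0$ and $f_1$: fiberwise over each $a$ in the $(n-1)$-grid the paths $(\overline{f}_0')_a$ and $(f_0)_a$ have the same subdivision class, so a single common shrinking map $q$ boxed with the identity realizes $\overline{f}_0'$ as a subdivision of $f_0$, and similarly for the other end. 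Transitivity of the subdivision relation then yields $f_0 \simeq_1 f_1$, and the analogous argument handles the inverse homotopy, so an $F$-homotopy downstairs lifts to one upstairs.

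Finally, for the homomorphism property I would unwind the concatenation multiplication on both sides and compare the piecewise defining formulas of $\phi_{M_1}(f \cdot g)$ and $\phi_{m_1}(f) \cdot \phi_{l_1}(g)$, using that concatenation occurs in box coordinate $2$ both upstairs and downstairs; the two formulas then agree coordinatewise. The step I expect to be the main obstacle is injectivity, precisely because subdivisions in the first coordinate are erased by the projection to subdivision classes, so reconstructing a genuine homotopy upstairs from one downstairs forces one to combine the lifting supplied by Lemma~\ref{surjec} with a careful common-subdivision argument in order to match the lifted endpoints to the original maps.
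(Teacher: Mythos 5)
Your proposal is correct and follows essentially the same route as the paper's proof: the same three-way case analysis of subdivisions (first coordinate versus last $n-1$ coordinates) for well-definedness, surjectivity via Lemma~\ref{surjec}, the same lifting of the direct homotopy through Lemma~\ref{surjec} combined with a fiberwise common-subdivision argument for injectivity, and the same coordinatewise comparison of the concatenation formulas for the homomorphism property. No substantive differences to report.
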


Now we can prove Theorem~\ref{introexact}, that there is a long exact sequence of relative homotopy groups.
By~\cite[Theorem 5.9]{LWYZ24}, for any based digraph map
$f:X\rightarrow G$
there is a long exact sequence
$$\xymatrix@C=0.5cm{
 \cdots\ar[r]&\overline{\pi}_{n+1}(P_{i})\ar[r]^{f'_{n+1}} &\overline{\pi}_{n+1}(X) \ar[r]^{f_{n+1}}&  \overline{\pi}_{n+1}(G) \ar[r]^{\delta_{n}}&
    \overline{\pi}_{n}(P_{f})\ar[r]^{f'_{n}}&
  \overline{\pi}_n(X) \ar[r]^{f_{n}} & \overline{\pi}_n(G)}$$
of based sets for $n\geq 0$, and if $n\geq 1$ then this is a long exact sequence of groups.

In the case of a digraph pair $(G,A)$, apply this to the digraph inclusion
$i: A\rightarrow G$. By Proposition~\ref{iso}, there is a digraph isomorphism
$$\zeta \colon \overline{P}(G,A,x_{0}) \rightarrow P_{i},\quad\quad \langle \gamma \rangle \mapsto (\gamma(0), \langle \gamma \rangle)$$ and by
Proposition~\ref{dual} there is a group isomorphism
$$\Phi_{n+1}:\overline{\pi}_{n+1}(G,A)\rightarrow \overline{\pi}_{n}(\overline{P}(G,A,x_0)),\quad \quad [f]\mapsto [\widetilde{f}],$$ where $f: (J_{m_1},\partial J_{m_1},m_1)\Box (J_{m_i}^{\Box n}, \partial J_{m_i}^{\Box n}) \rightarrow (G,A,x_0)$ and $$\widetilde{f}: (J_{m_i}, \partial J_{m_i})^{\Box n} \rightarrow (G,A,x_0),\quad\quad a\mapsto f|_{J_{m_1\Box a}} .$$
Let $\partial_{n+1}$ be the composite
\begin{equation} 
\label{partialdef} 
\partial_{n+1}:\overline{\pi}_{n+1}(G,A)\stackrel{\Phi_{n+1}}{\longrightarrow}\overline{\pi}_{n}(\overline{P}(G,A,x_{0}))\stackrel{\zeta_{n}}{\longrightarrow}\overline{\pi}_{n}(P_{i})\stackrel{i'_{n}}{\longrightarrow}\overline{\pi}_{n}(A), 
\end{equation} where $i'_n$ is induced by digraph map $$i': P_i \rightarrow A,\quad\quad (\gamma(0),\langle \gamma\rangle) \mapsto\gamma(0)$$
and let $j_{n+1}$ be the composite
\begin{equation} 
\label{jdef} 
j_{n+1}:\overline{\pi}_{n+1}(G)\stackrel{\delta_{n}}{\longrightarrow}\overline{\pi}_{n}(P_{i})\stackrel{\zeta_{n}^{-1}}{\longrightarrow}\overline{\pi}_{n}(\overline{P}(G,A,x_{0}))\stackrel{\Phi_{n+1}^{-1}}{\longrightarrow}\overline{\pi}_{n+1}(G,A). 
\end{equation} 
Note that both $\partial_{n+1}$  and  $j_{n+1}$ are group homomorphisms if $n\geq 1$ since
each of $\Phi_{n+1}$, $\zeta_{n}$, $\delta_{n+1}$ and $i_{n}'$ is. We obtain the following, which restates Theorem~\ref{introexact}.

\begin{thm}\label{exact}
Let $(G,A)$ be a based digraph pair. Then there is a long exact sequence
$$\xymatrix@C=0.5cm{
 \cdots\ar[r]&\overline{\pi}_{n+2}(G,A)\ar[r]^-{\partial_{n+2}} &\overline{\pi}_{n+1}(A) \ar[r]^{i_{n+1}}&  \overline{\pi}_{n+1}(G) \ar[r]^-{j_{n+1}}&
    \overline{\pi}_{n+1}(G,A)\ar[r]^-{\partial_{n+1}}&
  \overline{\pi}_n(A) \ar[r]^{i_{n}} & \overline{\pi}_n(G)}$$
of based sets for any $n\geq0 $. If $n\geq 1$, it is a long exact sequence of groups.~$\qqed$
\end{thm}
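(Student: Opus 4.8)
The plan is to obtain the sequence of Theorem~\ref{exact} by transporting the long exact sequence of~\cite[Theorem 5.9]{LWYZ24} along the isomorphisms already established in Propositions~\ref{iso} and~\ref{dual}. First I would apply~\cite[Theorem 5.9]{LWYZ24} to the based digraph map $f=i\colon A\rightarrow G$, taking $X=A$. This produces the long exact sequence
$$\xymatrix@C=0.45cm{\cdots\ar[r]& \overline{\pi}_{n+1}(A)\ar[r]^-{i_{n+1}}&\overline{\pi}_{n+1}(G)\ar[r]^-{\delta_n}&\overline{\pi}_n(P_i)\ar[r]^-{i'_n}&\overline{\pi}_n(A)\ar[r]^-{i_n}&\overline{\pi}_n(G)}$$
of based sets for $n\geq 0$, which is a sequence of groups for $n\geq 1$. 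This is the sequence whose terms $\overline{\pi}_n(P_i)$ I will replace by $\overline{\pi}_{n+1}(G,A)$.

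Next I would assemble the comparison isomorphisms. Proposition~\ref{iso} gives a digraph isomorphism $\zeta\colon\overline{P}(G,A,x_0)\rightarrow P_i$, which induces an isomorphism $\zeta_n\colon\overline{\pi}_n(\overline{P}(G,A,x_0))\rightarrow\overline{\pi}_n(P_i)$ for every $n\geq 0$. Proposition~\ref{dual} gives an isomorphism $\Phi_{n+1}\colon\overline{\pi}_{n+1}(G,A)\rightarrow\overline{\pi}_n(\overline{P}(G,A,x_0))$ for $n\geq 1$; for $n=0$ the well-definedness, surjectivity (via Lemma~\ref{surjec}) and injectivity arguments in the proof of Proposition~\ref{dual} still apply and show $\Phi_1$ is a bijection of based sets. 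Writing $\Psi_{n+1}=\zeta_n\circ\Phi_{n+1}$, I obtain a map $\Psi_{n+1}\colon\overline{\pi}_{n+1}(G,A)\rightarrow\overline{\pi}_n(P_i)$ that is a group isomorphism for $n\geq 1$ and a based-set bijection for $n=0$, while the remaining terms $\overline{\pi}_n(A)$ and $\overline{\pi}_n(G)$ are left untouched.

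I would then form the ladder comparing the displayed sequence with the target sequence of Theorem~\ref{exact}, using $\Psi_{n+1}$ on the $\overline{\pi}_{n+1}(G,A)$ versus $\overline{\pi}_n(P_i)$ terms and the identity elsewhere. The square around $j_{n+1}$ commutes precisely because the definition~\eqref{jdef} gives $j_{n+1}=\Phi_{n+1}^{-1}\circ\zeta_n^{-1}\circ\delta_n$, so $\Psi_{n+1}\circ j_{n+1}=\delta_n$; the square around $\partial_{n+1}$ commutes because~\eqref{partialdef} gives $\partial_{n+1}=i'_n\circ\zeta_n\circ\Phi_{n+1}=i'_n\circ\Psi_{n+1}$; and the squares involving $i_n$ and $i_{n+1}$ commute trivially since their vertical maps are identities. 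Thus the two sequences are isomorphic as ladders of based sets, and of groups in the degrees where the maps are homomorphisms.

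Finally, since the top row is exact and every vertical map is a bijection compatible with basepoints, exactness transports verbatim to the bottom row: at each term the condition $\mathrm{im}=(\text{outgoing map})^{-1}(\ast)$ is carried across by the bijections, giving group-exactness for $n\geq 1$ and based-set exactness for $n\geq 0$. I expect the only delicate point to be bookkeeping rather than mathematics, since the substantive content has been absorbed into Propositions~\ref{iso} and~\ref{dual}. What remains is (i) confirming that the two nontrivial squares commute, which is immediate from the definitions~\eqref{partialdef} and~\eqref{jdef}, and (ii) handling the bottom of the sequence ($n=0,1$), where one must interpret exactness at the based-set level and check that $\Phi_1$ is a based-set bijection so that this portion transports correctly.
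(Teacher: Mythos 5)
Your proposal is correct and follows essentially the same route as the paper: apply the Puppe sequence of \cite[Theorem 5.9]{LWYZ24} to the inclusion $i\colon A\rightarrow G$, transport $\overline{\pi}_n(P_i)$ to $\overline{\pi}_{n+1}(G,A)$ via the isomorphisms of Propositions~\ref{iso} and~\ref{dual}, and define $\partial_{n+1}$ and $j_{n+1}$ exactly as in~(\ref{partialdef}) and~(\ref{jdef}). Your extra remark about extending $\Phi_1$ to a based-set bijection at the bottom of the sequence is a point the paper leaves implicit, but otherwise the arguments coincide.
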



Further, a map of digraph triples induces a commutative diagram of long exact sequences of relative homotopy groups.

\begin{prop}\label{mor}
Any map $f\colon(G,A,x_0)\rightarrow (H,B,y_0)$ of digraph triples induces a commutative diagram of exact sequences
$$\xymatrix@C=0.5cm{
 \cdots\ar[r]&\overline{\pi}_{n+2}(G,A)\ar[r]^-{\partial_{n+2}} \ar[d] &\overline{\pi}_{n+1}(A) \ar[r]^{i_{n+1}}\ar[d] &  \overline{\pi}_{n+1}(G) \ar[r]^-{j_{n+1}}\ar[d] &
    \overline{\pi}_{n+1}(G,A)\ar[r]^-{\partial_{n+1}}\ar[d]&
  \overline{\pi}_n(A) \ar[r]^{i_{n}}\ar[d]  & \overline{\pi}_n(G)\ar[d] \\
   \cdots\ar[r]&\overline{\pi}_{n+2}(H,B)\ar[r]^-{\partial_{n+2}} &\overline{\pi}_{n+1}(B) \ar[r]^{i_{n+1}} &  \overline{\pi}_{n+1}(H) \ar[r]^-{j_{n+1}}&
    \overline{\pi}_{n+1}(H,B)\ar[r]^-{\partial_{n+1}}&
  \overline{\pi}_n(B) \ar[r]^{i_{n}}  & \overline{\pi}_n(H).
  }$$
\begin{proof}
By~\cite[Proposition 5.10]{LWYZ24}, a commutative diagram of based digraphs
$$\xymatrix{
     X\ar[d]^{u}\ar[r]^{i} & G\ar[d]^{f} \\
     Y\ar[r]^{g} & H   }$$
induces a commutative diagram of exact sequences
$$\xymatrix@C=0.5cm{
 \cdots\ar[r]&\overline{\pi}_{n+1}(P_{f})\ar[r]^-{f'_{n+1}} \ar[d] &\overline{\pi}_{n+1}(X) \ar[r]^{f_{n+1}}\ar[d] &  \overline{\pi}_{n+1}(G) \ar[r]^-{\delta_{n}}\ar[d] &
    \overline{\pi}_{n}(P_{f})\ar[r]^-{f'_{n}}\ar[d]&
  \overline{\pi}_n(X) \ar[r]^{f_{n}}\ar[d]  & \overline{\pi}_n(G)\ar[d] \\
   \cdots\ar[r]&\overline{\pi}_{n+1}(P_{f})\ar[r]^-{g'_{n+1}} &\overline{\pi}_{n+1}(Y) \ar[r]^{g_{n+1}} &  \overline{\pi}_{n+1}(H) \ar[r]^-{\delta_{n}}&
    \overline{\pi}_{n}(P_{g})\ar[r]^-{g'_{n}}&
  \overline{\pi}_n(Y) \ar[r]^{g_{n}}  & \overline{\pi}_n(H).
  }$$

In our case, the digraph map $f: (G,A,x_0)\rightarrow (H,B,y_0)$ is equivalent to a commutative diagram of based digraphs
 $$\xymatrix{
  A \ar[d]_{f} \ar[r]^{i}
                & G \ar[d]^{f}  \\
  B  \ar[r]^{j}
                & H.             } $$
Thus there is a commutative diagram of exact sequences
$$\xymatrix@C=0.5cm{
 \cdots\ar[r]&\overline{\pi}_{n+1}(P_{i})\ar[r]^-{i'_{n+1}} \ar[d] &\overline{\pi}_{n+1}(A) \ar[r]^{i_{n+1}}\ar[d] &  \overline{\pi}_{n+1}(G) \ar[r]^-{\delta_{n}}\ar[d] &
    \overline{\pi}_{n}(P_{i})\ar[r]^-{i'_{n}}\ar[d]&
  \overline{\pi}_n(A) \ar[r]^{i_{n}}\ar[d]  & \overline{\pi}_n(G)\ar[d] \\
   \cdots\ar[r]&\overline{\pi}_{n+1}(P_{j})\ar[r]^-{j'_{n+1}} &\overline{\pi}_{n+1}(B) \ar[r]^{j_{n+1}} &  \overline{\pi}_{n+1}(H) \ar[r]^-{\delta_{n}}&
    \overline{\pi}_{n}(P_{j})\ar[r]^-{j'_{n}}&
  \overline{\pi}_n(B) \ar[r]^{j_{n}}  & \overline{\pi}_n(H).
  }$$
The definitions of $\partial_{n+1}$ and $j_{n+1}$ imply that this commutative diagram can be changed into the one in the statement of the proposition provided there is a commutative diagram of digraphs
$$\xymatrix@C=0.5cm{
    \overline{\pi}_{n}(P_{i})\ar[r]^-{\theta_{n}}\ar[d]&
  \overline{\pi}_n(\overline{P}(G,A,x_{0})) \ar[rr]^{\Phi_{n+1}^{-1}}\ar[d]  & & \overline{\pi}_{n+1}(G,A)\ar[d] \\
    \overline{\pi}_{n}(P_{j})\ar[r]^-{\theta_{n}}&
  \overline{\pi}_n(\overline{P}(H,B,y_{0})) \ar[rr]^{\Phi_{n+1}^{-1}}  & & \overline{\pi}_{n+1}(H,B)
  }$$
and a corresponding commutative diagram of digraphs with respect to $\theta^{-1}_{n}$ and $\Phi_{n+1}$. Since $\theta$ is a map of based digraphs the functoriality property in Proposition~\ref{functoriality} implies that the left square commutes, and similarly for $\theta^{-1}_{n}$. By definition, $\Phi_{n+1}$ is an inverse limit induced by maps $\phi_{n}=p\circ d_{n}$. Here, $p$ is a projection of digraphs, so the functoriality property in Proposition~\ref{functoriality} implies that it commutes with maps of digraphs, while $d_{n}$ is the duality isomorphism of Hom sets, whose definition shows that it commutes with maps of digraphs. Thus the right square of the corresponding diagram with respect to $\Phi_{n+1}$ commutes, as does the right square of the pictured diagram with respect to $\Phi^{-1}_{n+1}$. 
  \end{proof}
\end{prop}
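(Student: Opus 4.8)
The plan is to deduce the naturality of the relative homotopy long exact sequence from the already-established naturality of the mapping path long exact sequence in~\cite[Proposition 5.10]{LWYZ24}, and then to transport it along the identifications furnished by Propositions~\ref{iso} and~\ref{dual}.

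First I would observe that a map of triples $f\colon(G,A,x_0)\rightarrow(H,B,y_0)$ is exactly the data of a commutative square of based digraph maps
$$\xymatrix{ A\ar[r]^{i}\ar[d]_{f} & G\ar[d]^{f}\\ B\ar[r]^{j} & H, }$$
in which the horizontal arrows are the inclusions and the vertical arrows are the restrictions of $f$. Feeding this square into~\cite[Proposition 5.10]{LWYZ24} produces a commutative ladder between the long exact sequences associated to the mapping path digraphs $P_i$ and $P_j$, with connecting maps $\delta_n$ and fibre maps $i'_n$, $j'_n$.

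Next I would recall that, by the definitions in~\eqref{partialdef} and~\eqref{jdef}, the boundary map $\partial_{n+1}$ and the map $j_{n+1}$ of the asserted sequence are obtained from this mapping path sequence by composing with the isomorphisms $\zeta_n$ of Proposition~\ref{iso} and $\Phi_{n+1}$ of Proposition~\ref{dual} (together with the fibre map $i'_n$). Thus, to convert the ladder above into the one claimed in the proposition, it suffices to verify that these two families of isomorphisms are natural with respect to $f$; that is, that the square
$$\xymatrix@C=1.6cm{ \overline{\pi}_n(P_i)\ar[r]^-{\zeta_n^{-1}}\ar[d] & \overline{\pi}_n(\overline{P}(G,A,x_0))\ar[r]^-{\Phi_{n+1}^{-1}}\ar[d] & \overline{\pi}_{n+1}(G,A)\ar[d]\\ \overline{\pi}_n(P_j)\ar[r]^-{\zeta_n^{-1}} & \overline{\pi}_n(\overline{P}(H,B,y_0))\ar[r]^-{\Phi_{n+1}^{-1}} & \overline{\pi}_{n+1}(H,B) }$$
and its analogue for $\zeta_n$ and $\Phi_{n+1}$ both commute. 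The left-hand square commutes because $\zeta$ comes from a digraph isomorphism that is itself natural in the triple, so the functoriality of Proposition~\ref{functoriality} applies directly.

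For the right-hand square I would unwind the formula $\Phi_{n+1}=p\circ d$ from Proposition~\ref{dual}. The duality map $d$ is the currying isomorphism $d(\gamma)(a)(j)=\gamma(j,a)$ on Hom sets, which manifestly intertwines postcomposition by $f$ on source and target, and the projection $p\colon\lambda\mapsto\langle\lambda\rangle$ onto subdivision classes commutes with $f$ because $f$ carries subdivision-equivalent paths to subdivision-equivalent paths. Since $\Phi_{n+1}$ is assembled from these two natural maps by passing to the direct limit over grid lengths and then to $F$-homotopy classes, naturality is inherited at each stage. The main obstacle I anticipate is the bookkeeping at this last stage: one must confirm that the map induced by $f$ on the direct limit of Hom sets is well defined and compatible with the $F$-homotopy relation uniformly in the grid lengths. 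Because each constituent $d$ and $p$ is already natural at the level of the individual Hom sets, this reduces to a formal, if slightly tedious, verification rather than a substantive difficulty.
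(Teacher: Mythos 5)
Your proposal follows the paper's proof essentially verbatim: it invokes the naturality of the mapping path long exact sequence from~\cite[Proposition 5.10]{LWYZ24} applied to the square of inclusions, then reduces to the naturality of the identifications $\zeta_n$ (Proposition~\ref{iso}) and $\Phi_{n+1}$ (Proposition~\ref{dual}), verifying the latter by decomposing $\Phi_{n+1}$ into the duality map $d$ and the projection $p$ onto subdivision classes exactly as the paper does. The only difference is that you flag the passage to direct limits and $F$-homotopy classes as a point needing care, which the paper passes over silently; your argument is correct.
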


\section{Digraph Hurewicz and suspension homomorphisms}
In this section we define a digraph Hurewicz homomorphism and, using the exact sequence of relative digraph homotopy groups in Theorem~\ref{exact}, show that there is a digraph suspension homomorphism. The two homomorphisms are then shown to commute in a precise way. 

\subsection{The digraph Hurewicz homomorphism} We first construct a Hurewicz homomorphism  $$H_n:\bar{\pi}_n(G,A)\rightarrow H_n^c(G,A)$$ from digraph homotopy groups to cubical digraph homology. 

Suppose $(G,A)$ is a digraph pair with base-point $x_0$. For an arbitrary $n$-dimensional cube $I_1^n$, there is an obvious digraph isomorphism  $$\sigma =(\sigma^1,\sigma^2,\cdots,\sigma^n): J_1^n\rightarrow I_1^n; \quad (x_0,x_1,\cdots, x_n)\mapsto  (y_0,y_1,\cdots,y_n),$$ where $\sigma^j :J_1\rightarrow I_1$ is defined by  $$\sigma^j(x_j) = y_j = \left\{ \begin{array}{lll}
 x_j, & \hbox{\text{ if $0\rightarrow 1$ in $j^{th}$-axis of $I_1^n$};} \\
        1- x_j   , & \hbox{$\text{ if $1\rightarrow 0$ in $j^{th}$-axis of $I_1^n$}$.}
 \end{array}\right.$$
Write the inversion number of the $j^{th}$-axis of $\sigma$ by $t(\sigma^j(0),\sigma^j(1))$ and  write the total inversion number of $\sigma$ by $T(\sigma) = \sum\limits_{i=1}^n t(\sigma^j(0),\sigma^j(1))$.

To define the Hurewicz homomorphism on any element $f:(J_{m_i}^{\Box n},\partial J_{m_i}^{\Box n},\bar{J}_{m_i}^{\Box (n-1)}) $ $\rightarrow (G,A,x_0)$ in $\bar{\pi}_n(G,A)$, we decompose $f$ into an $n$-dimensional singular cubical chain $\sum\limits_s a_sf_s$, where each $f_s: J_1^{\Box n}\rightarrow G$. Restrict to the subdigraph $I_1^{\Box n}$ of $J_{m_i}^{\Box n}$ that originates from vertex $(i_1,i_2,\cdots, i_n)$ in $J_{m_i}^{\Box n}$, and denote this subdigraph by $I_{i_1,i_2,\cdots,i_n}^{\Box n}$.  There is a digraph isomorphism $$\sigma_{i_1,i_2,\cdots,i_n}: J_1^{\Box n} \rightarrow I_{i_1,i_2,\cdots,i_n}^{\Box n} $$ from which we obtain a singular $n$-dimensional cubical chain $$f_{i_1,i_2,\cdots,i_n}: J_1^{\Box n} \xrightarrow{\sigma_{i_1,i_2,\cdots,i_n}}I_{i_1,i_2,\cdots,i_n}^{\Box n}{i}\stackrel{i}{\longrightarrow} J_{m_i}^{\Box n}\stackrel{f}{\longrightarrow}  G.$$
Let 
 $$h_n: Hom((J,\partial J,\overline{J})^{\Box n},(G,A,x_0))\rightarrow C_n^c(G)/C_n^c(A)$$ be a homomorphism defined by   
 $$\{f\}\mapsto [\sum \limits_{0\leq i_k\leq m_k-1\atop
1\leq k\leq n}(-1)^{T(\sigma_{i_1,i_2,\cdots,i_n})}f_{i_1,i_2,\cdots, i_n}]$$ where on the right we have chosen a representative of the class $\{f\}$. We will show $h_n$ is well-defined in Lemma \ref{subind}, that is, the image of $h_n$ is independent of choice of representative $f$ in $\{f\}.$ 
Granting this, $h_n$ then induces a Hurewicz map  $$H_n: \bar{\pi}_n(G,A)\rightarrow H_n^c(G,A);\quad \quad [\{f\}]\mapsto [h_n(\{f\})]$$ between digraph homotopy groups and cubical digraph homology groups for $n\geq1$. .
\begin{lem}\label{subind}
The homomorphism $h_n$ is well-defined for $n\geq 1.$
\begin{proof}
To prove $h_n$ is well-defined, we need to check it is independent of subdivision. That is, we need to check that for any shrinking map  $h:J_{M_i}^n\rightarrow J_{m_i}^n$ and digraph map $f:(J_{m_i}^{\Box n},\partial J_{m_i}^{\Box n},\bar{J}_{m_i}^{\Box (n-1)})\rightarrow (G,A,x_0)$ we have $$[\sum \limits_{0\leq i_k\leq m_k-1\atop
1\leq k\leq n}(-1)^{T(\sigma_{i_1,i_2,\cdots,i_n})}f_{i_1,i_2,\cdots, i_n}] = [\sum \limits_{0\leq i_k\leq M_k-1\atop
1\leq k\leq n}(-1)^{T(\sigma_{i_1,i_2,\cdots,i_n})}(h\circ f)_{i_1,i_2,\cdots, i_n}].$$

Since any $n$-dimensional shrinking map $h$ can be decomposed as an $n$-fold box product of 1-dimensional shrinking maps~\cite{LWYZ24}, i.e. $h = h_1\Boxx h_2\Boxx \cdots\Boxx h_n$ where each $h_i$ is the composition of one-step subdivisions, it suffices to prove $$[\sum \limits_{0\leq i_k\leq m_k-1\atop
1\leq k\leq n}(-1)^{T(\sigma_{i_1,i_2,\cdots,i_n})}f_{i_1,i_2,\cdots, i_n}] = [\sum \limits_{\substack{0\leq i_1\leq m_1\\0\leq i_k\leq m_k-1\\
2\leq k\leq n}}(-1)^{T(\sigma_{i_1,i_2,\cdots,i_n})}(\bar{h}\circ f)_{i_1,i_2,\cdots, i_n}]$$ where $$\bar{h} = h_1\Boxx id \Boxx \cdots\Boxx id: J_{m_1+1}\Boxx J_{m_2}\Boxx\cdots \Boxx J_{m_n}\rightarrow J_{m_i}^{\Box n}.$$

Let $\bar{f}=f\circ \bar{h}$. Suppose that $h_1:J_{m_1+1}\rightarrow J_{m_1}$ is the one-step subdivision of $id: J_{m_1}\rightarrow J_{m_1}$ at vertex $s_0$, i.e. $h_1(s_0)=h_1(s_0+1) $. Observe that: 
\begin{align*} 
& \sum \limits_{0\leq i_1\leq m_1\atop 0\leq i_k\leq m_k-1;
2\leq k\leq n}(-1)^{T(\sigma_{i_1,i_2,\cdots,i_n})}\bar{f}_{i_1,i_2,\cdots, i_n} \\
&= \sum \limits_{0\leq i_k\leq m_k-1\atop
2\leq k\leq n}(-1)^{T(\sigma_{i_2,\cdots,i_n})}\left(\sum\limits_{0\leq i_1\leq m_1}(-1)^{t(\sigma^1_{i_1,i_2,\cdots,i_n}(0),\sigma^1_{i_1,i_2,\cdots,i_n}(1))}\bar{f}_{i_1,i_2,\cdots, i_n} \right) \\
&= \sum \limits_{0\leq i_k\leq m_k-1\atop
2\leq k\leq n}(-1)^{T(\sigma_{i_2,\cdots,i_n})}\bigg(\sum\limits_{0\leq i_1\leq s_0-1}(-1)^{t(\sigma^1_{i_1,i_2,\cdots,i_n}(0),\sigma^1_{i_1,i_2,\cdots,i_n}(1))}\bar{f}_{i_1,i_2,\cdots, i_n}\\
& \hspace{10mm}+ (-1)^{t(\sigma^1_{s_0,i_2,\cdots,i_n}(0),\sigma^1_{s_0,i_2,\cdots,i_n}(1))}\bar{f}_{s_0,i_2,\cdots, i_n} \\ 
& \hspace{10mm}+ \sum\limits_{s_0+1\leq i_1\leq m_1}(-1)^{t(\sigma^1_{i_1,i_2,\cdots,i_n}(0),\sigma^1_{i_1,i_2,\cdots,i_n}(1))}\bar{f}_{i_1,i_2,\cdots, i_n} \bigg)
\end{align*}
\begin{align*}
&= \sum \limits_{0\leq i_k\leq m_k-1\atop
2\leq k\leq n}(-1)^{T(\sigma_{i_2,\cdots,i_n})}(\sum\limits_{0\leq i_1\leq s_0-1}(-1)^{t(\sigma^1_{i_1,i_2,\cdots,i_n}(0),\sigma^1_{i_1,i_2,\cdots,i_n}(1))}\bar{f}_{i_1,i_2,\cdots, i_n})\\
& \hspace{10mm} +\sum \limits_{0\leq i_k\leq m_k-1\atop
2\leq k\leq n}(-1)^{T(\sigma_{i_2,\cdots,i_n})}(\sum\limits_{s_0+1\leq i_1\leq m_1}(-1)^{t(\sigma^1_{i_1,i_2,\cdots,i_n}(0),\sigma^1_{i_1,i_2,\cdots,i_n}(1))}\bar{f}_{i_1,i_2,\cdots, i_n} )\\
& \hspace{10mm}+ \sum \limits_{0\leq i_k\leq m_k-1\atop
2\leq k\leq n}(-1)^{T(\sigma_{i_2,\cdots,i_n})}((-1)^{t(\sigma^1_{i_1,i_2,\cdots,i_n}(0),\sigma^1_{i_1,i_2,\cdots,i_n}(1))}\bar{f}_{s_0,i_2,\cdots, i_n})\\
&= \sum \limits_{0\leq i_k\leq m_k-1\atop
1\leq k\leq n}(-1)^{T(\sigma_{s_1,i_2,\cdots,i_n})}f_{s_0,i_2,\cdots, i_n}\\
& \hspace{10mm}+ \sum \limits_{0\leq i_k\leq m_k-1\atop
2\leq k\leq n}(-1)^{T(\sigma_{i_2,\cdots,i_n})}((-1)^{t(\sigma^1_{s_0,i_2,\cdots,i_n}(0),\sigma^1_{i_1,i_2,\cdots,i_n}(1))}\bar{f}_{s_0,i_2,\cdots, i_n}).
\end{align*}
 Since $h_1$ is the one-step subdivision of $id: J_{m_1}\rightarrow J_{m_1}$ at vertex $s_0$, $\bar{f}_{s_0,i_2,\cdots, i_n}$ is a degeneracy. Thus $\bar{f}_{s_0,i_2,\cdots, i_n}$ lies in $[0]\in C_n^c(G)/C_n^c(A)$. Therefore $$[\sum \limits_{0\leq i_k\leq m_k-1\atop
1\leq k\leq n}(-1)^{T(\sigma_{i_1,i_2,\cdots,i_n})}f_{i_1,i_2,\cdots, i_n} ]= [\sum \limits_{\substack{0\leq i_k\leq M_k-1\\
1\leq k\leq n}}(-1)^{T(\sigma_{i_1,i_2,\cdots,i_n})}(\bar{h}\circ f)_{i_1,i_2,\cdots, i_n}].$$ Inductively, we have for any shrinking map $h$, $$[\sum \limits_{0\leq i_k\leq m_k-1\atop
1\leq k\leq n}(-1)^{T(\sigma_{i_1,i_2,\cdots,i_n})}f_{i_1,i_2,\cdots, i_n} ]= [\sum \limits_{\substack{0\leq i_k\leq M_k-1\\
1\leq k\leq n}}(-1)^{T(\sigma_{i_1,i_2,\cdots,i_n})}(h\circ f)_{i_1,i_2,\cdots, i_n}].$$ Hence, $h_n$ is well-defined.
\end{proof}
\end{lem}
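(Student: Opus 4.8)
The plan is to establish that $h_n$ does not depend on the chosen representative of an $F$-homotopy class. Since any two representatives of a class are related by a shrinking map, this reduces to showing invariance under subdivision: for every shrinking map $h\colon J_{M_i}^{\Box n}\to J_{m_i}^{\Box n}$ and every $f$, the cubical chains assigned to $f$ and to $f\circ h$ must coincide in the target $C_n^c(G)/C_n^c(A)$.

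First I would reduce to an elementary move. Using the box-product decomposition of shrinking maps recalled earlier, $h = h_1\Box\cdots\Box h_n$ where each $h_j$ is a one-dimensional shrinking map, and each $h_j$ is in turn a composite of one-step subdivisions, each duplicating a single vertex. Because the chain $h_n(f)$ is assembled coordinate by coordinate from the unit-cube decomposition, it is enough to treat $\bar h = h_1\Box \mathrm{id}\Box\cdots\Box \mathrm{id}$ with $h_1\colon J_{m_1+1}\to J_{m_1}$ the one-step subdivision at a vertex $s_0$, so that $h_1(s_0)=h_1(s_0+1)$.

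Next I would expand the signed sum defining the chain for $\bar f = f\circ\bar h$, splitting the first index into the ranges $0\le i_1\le s_0-1$, $i_1=s_0$, and $s_0+1\le i_1\le m_1$. The two outer ranges reindex bijectively onto the slabs of $J_{m_i}^{\Box n}$ that build $h_n(f)$, and I would check that the corresponding inversion numbers $T(\sigma_{i_1,\dots,i_n})$ agree under this reindexing so that the signs match exactly. The remaining middle slab $\bar f_{s_0,i_2,\dots,i_n}$ is the essential point: since $\bar h$ identifies $s_0$ with $s_0+1$, the map $\bar f$ is constant along the first coordinate there, so this cube factors through the degeneracy $S_1$ and is therefore degenerate, hence represents $0$ in the target. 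Consequently the chain for $\bar f$ equals the chain for $f$.

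The main obstacle I anticipate is the sign bookkeeping rather than the degeneracy argument: I must verify that the total inversion numbers of the cube-identifying isomorphisms $\sigma_{i_1,\dots,i_n}$ are genuinely preserved, and not merely preserved up to sign, when the surviving slabs are reindexed across the inserted vertex, since only then do the non-degenerate terms line up correctly. Granting the single-move case, an induction on the number of one-step subdivisions together with the coordinate-wise decomposition of $h$ upgrades the result to an arbitrary shrinking map, proving that $h_n$ is well-defined.
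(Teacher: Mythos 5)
Your proposal follows essentially the same route as the paper's proof: reduce to subdivision invariance, decompose the shrinking map into one-step subdivisions acting in a single box coordinate, split the signed sum over the first index into the two outer ranges (which reindex onto the terms for $f$ with matching inversion numbers) and the middle slab at $s_0$, observe that the middle slab is degenerate because $h_1(s_0)=h_1(s_0+1)$ and hence vanishes in $C_n^c(G)/D_n^c(G)$, and conclude by induction on the number of one-step subdivisions. The sign-bookkeeping concern you flag is exactly the computation the paper carries out explicitly, and it works out as you anticipate.
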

\begin{thm}\label{Hure}
Let $G$ be a digraph with basepoint $x_0.$ Then $$H_n: \bar{\pi}_n(G,A)\rightarrow H_n^c(G,A)$$ is a map of sets for $n\geq 1$ and a group homomorphism for $n\geq 2$.
\begin{proof}
Let $f,\text{ }g:(J_{m_i}^{\Box n},\partial J_{m_i}^{\Box n},\bar{J}_{m_i}^{\Box (n-1)})\rightarrow (G,A,x_0)$ be digraph maps. To show that $H_n$ is well-defined  we need to check
 $$ \partial^c_{n}\circ h_{n}(\{f\})=\partial^c_{n}([\sum \limits_{0\leq i_k\leq m_k-1\atop
1\leq k\leq n}(-1)^{T(\sigma_{i_1,1_2,\cdots,i_n})}f_{i_1,i_2,\cdots, i_n}]) \in C_{n-1}^c(A)$$
and if $\{f\}\simeq_F \{g\}$ then 
$$[\sum \limits_{0\leq i_k\leq m_k-1\atop
1\leq k\leq n}(-1)^{T(\sigma_{i_1,1_2,\cdots,i_n})}f_{i_1,i_2,\cdots, i_n}] = [\sum \limits_{0\leq j_k\leq l_k-1\atop
1\leq k\leq n}(-1)^{T(\sigma_{j_1,j_2,\cdots,j_n})}g_{j_1,j_2,\cdots, j_n}].$$ 
\medskip 

\noindent 
\textit{Well-definedness under the boundary map}. Using the definitions of $h_n(\{f\})$, $\partial^{c}_{n}$, $d^{c}_{s}$, the face maps $F_{s,0}$ and $F_{s,1}$, the fact $\partial^{c}_{n}$ is a homomorphism and properties of the inversion number, there is a sequence of equalities:

\begin{align*}
& \partial^{c}_{n}(h_n(\{f\})) \\ 
& = \partial^{c}_{n}([\sum \limits_{0\leq i_k\leq m_k-1\atop
1\leq k\leq n}(-1)^{T(\sigma_{i_1,i_2,\cdots,i_n})}f_{i_1,i_2,\cdots, i_n}]) \\
& = [ \sum \limits_{0\leq i_k \leq m_k-1\atop 1\leq k\leq n}(-1)^{T(\sigma_{i_1,i_2,\cdots,i_n})}\partial^{c}_{n}(f_{i_1,i_2,\cdots, i_n})] \\
&= [\sum \limits_{0\leq i_k\leq m_k-1\atop
1\leq k\leq n}(-1)^{T(\sigma_{i_1,i_2,\cdots,i_n})}(\sum \limits_{s=1}^n (-1)^s d_s^cf_{i_1,i_2,\cdots, i_n})] \\
&= [\sum \limits_{0\leq i_k\leq m_k-1\atop
1\leq k\leq n}(-1)^{T(\sigma_{i_1,i_2,\cdots,i_n})}(\sum \limits_{s=1}^n (-1)^s (f_{i_1,i_2,\cdots, i_n}\circ F_{s,0}-f_{i_1,i_2,\cdots, i_n}\circ F_{s,1}))] \\
&= [\sum \limits_{0\leq i_k\leq m_k-1\atop
1\leq k\leq n} \sum \limits_{s=1}^n
(-1)^{T(\sigma_{i_1,i_2,\cdots,i_n})+s}(f_{i_1,i_2,\cdots,  i_n}|_{J_1\Box \cdots \Box\{0\}\Box\cdots \Box J_1}
 -f_{i_1,i_2,\cdots,  i_n}|_{J_1\Box \cdots \Box\{1\}\Box\cdots \Box J_1})]\\
&= [\sum\limits_{s=1}^{n} \sum\limits_{\substack{0\leq i_k\leq m_k-1 \\ 1\leq k\leq n \\ k \neq s}}
 (-1)^{T(\sigma_{i_1,i_2,\cdots,i_{s-1},i_{s+1},\cdots,i_n})+s}
(\sum\limits_{i_s =0}^{m_s -1} (-1)^{2t(\sigma^{s}_{i_1,i_2,\cdots,i_n}(0),\sigma^{s}_{i_1,i_2,\cdots,i_n}(1))} \\
& \hspace{10mm} (f_{i_1,i_2,\cdots,  i_n}|_{J_1\Box \cdots \Box\{i_s\}\Box\cdots \Box J_1}-f_{i_1,i_2,\cdots,  i_n}|_{J_1\Box \cdots \Box\{i_s+1\}\Box\cdots \Box J_1}))]
\end{align*}
\begin{align*}
 &=  [\sum \limits_{s=1}^{n}  \sum\limits_{\substack{0\leq i_k\leq m_k-1 \\
1\leq k\leq n \\ k \neq s}}
(-1)^{T(\sigma_{i_1,i_2,\cdots,i_{s-1},i_{s+1},\cdots,i_n})+s} \\
& \hspace{10mm} (\sum\limits_{i_s =0}^{m_s -1}
(f_{i_1,i_2,\cdots,  i_n}|_{J_1\Box \cdots \Box\{i_s\}\Box\cdots \Box J_1}-f_{i_1,i_2,\cdots,  i_n}|_{J_1\Box \cdots \Box\{i_s+1\}\Box\cdots \Box J_1}))]\\
 &= [\sum\limits_{s=1}^n  \sum\limits_{\substack{0\leq i_k\leq m_k-1\\
1\leq k\leq n \\  k \neq s}}
(-1)^{T(\sigma_{i_1,i_2,\cdots,i_{s-1},i_{s+1},\cdots,i_n})+s} \\
& \hspace{10mm}(f_{i_1,i_2,\cdots,  i_n}|_{J_1\Box \cdots \Box\{0\}\Box\cdots \Box J_1}-f_{i_1,i_2,\cdots,  i_n}|_{J_1\Box \cdots \Box\{m_s\}\Box\cdots \Box J_1})].
 \end{align*}  
Since $f_{i_1,i_2,\cdots,  i_n}|_{J_1\Box \cdots \Box\{0\}\Box\cdots \Box J_1}$ and $f_{i_1,i_2,\cdots,  i_n}|_{J_1\Box \cdots \Box\{m_s\}\Box\cdots \Box J_1}$ lie in the boundary of $J_{m_i}^{\Box n}$, we have
 $$f_{i_1,i_2,\cdots,  i_n}|_{J_1\Box \cdots \Box\{0\}\Box\cdots \Box J_1}, \quad f_{i_1,i_2,\cdots,  i_n}|_{J_1\Box \cdots \Box\{m_s\}\Box\cdots \Box J_1}\in C_{n-1 }^c(A).$$ Hence,
   $\partial^c_n([\sum \limits_{0\leq i_k\leq m_k-1\atop
1\leq k\leq n}(-1)^{T(\sigma_{i_1,i_2,\cdots,i_n})}f_{i_1,i_2,\cdots, i_n}])=[0].$ 
\medskip 

\noindent 
\textit{Well-definedness under homotopy}. Since $\{f\}\s_{1}\{g\}$ if and only if
$f\s_1 g$. Thus we will ignore the subdivision class $\{f\}$ and directly consider it as $f$ later.
It suffices to check a $1$-step $F$ homotopy: if $f\simeq_1 g$ then $H_n(f) =[h_n(f)] = [h_n(g)] = H_n(g) $. Suppose there are two shrinking maps $h:J_{M_i}^{\Box n}\rightarrow J_{m_i}^{\Box n}$ and $h^{'}:J_{M_i}^{\Box n}\rightarrow J_{l_i}^{\Box n}$ such that $f\circ h \dr g\circ h^{'}$. By the definition of $1$-step $F$-homotopy, there is a digraph map $F: J_{M_i}^{\Box n} \Box J_1\rightarrow G$ such that $F|_{J_{M_i}^{\Box n} \Box \{0\}} = f\circ h$ and $F|_{J_{M_i}^{\Box n} \Box \{1\}} = g\circ h'.$ 

Using definitions and properties as in the case of well-definedness under the boundary map, there is a sequence of equalities:
 \begin{align*} & \partial^{c}_{n+1}(h_{n+1}(F))\\
 &= \partial^c_{n+1}([\sum \limits_{0\leq i_k\leq M_k-1 \atop
1\leq k\leq n+1}(-1)^{T(\sigma_{i_1,i_2,\cdots,i_{n},0})}F_{i_1,i_2,\cdots, i_{n},0}]) \\
& =  [\sum \limits_{0\leq i_k\leq M_k-1 \atop
1\leq k\leq n+1}(-1)^{T(\sigma_{i_1,i_2,\cdots,i_{n},0})}(\sum \limits_{s=1}^{n+1} (-1)^s (F_{i_1,i_2,\cdots, i_{n},0}\circ F_{s,0}-F_{i_1,i_2,\cdots,i_{n},0} \circ F_{s,1}))] \\
& = [\sum \limits_{0\leq i_k\leq M_k-1 \atop
1\leq k\leq n+1}(-1)^{T(\sigma_{i_1,i_2,\cdots,i_{n},0})}(\sum \limits_{s=1}^{n} (-1)^s (F_{i_1,i_2,\cdots,i_{n},0}\circ F_{s,0}-F_{i_1,i_2,\cdots, i_{n},0} \circ F_{s,1})] \\
& \hspace{10mm} + [(-1)^{n+1} (F_{i_1,i_2,\cdots,i_{n},0}\circ F_{n+1,0}-F_{i_1,i_2,\cdots, i_{n},0} \circ F_{n+1,1}))]\\
& = [\sum \limits_{0\leq i_k\leq M_k-1 \atop
1\leq k\leq n}(-1)^{T(\sigma_{i_1,i_2,\cdots,i_{n},0})}(\sum \limits_{s=1}^{n} (-1)^s (F_{i_1,i_2,\cdots,i_{n},0}\circ F_{s,0}-F_{i_1,i_2,\cdots, i_{n},0} \circ F_{s,1}) ]\\
& \hspace{10mm} + [(-1)^{n+1} (F_{i_1,i_2,\cdots,i_{n},0}\circ F_{n+1,0}-F_{i_1,i_2,\cdots, i_{n},0} \circ F_{n+1,1})) ]\\
& =  [\sum \limits_{0\leq i_k\leq M_k-1 \atop
1\leq k\leq n} \sum \limits_{s=1}^{n}(-1)^{T(\sigma_{i_1,i_2,\cdots,i_n,0})+s} (F_{i_1,i_2,\cdots, i_n,0}\circ F_{s,0}-F_{i_1,i_2,\cdots, i_n,0} \circ F_{s,1})] 
\end{align*}
\begin{align*}
& \hspace{10mm} +  [\sum\limits_{0\leq i_k\leq M_k-1\atop
1\leq k\leq n} (-1)^{T(\sigma_{i_1,i_2,\cdots,i_n,0})+n+1}(F_{i_1,i_2,\cdots, i_n,0}\circ F_{n+1,0}-F_{i_1,i_2,\cdots, i_n,0} \circ F_{n+1,1})]\\
& = [\sum \limits_{0\leq i_k\leq M_k-1 \atop
1\leq k\leq n} \sum \limits_{s=1}^{n}(-1)^{T(\sigma_{i_1,i_2,\cdots,i_n,0})+s} (F_{i_1,i_2,\cdots,  i_n,0}|_{J_1\Box \cdots \Box\{0\}\Box\cdots \Box J_1}-F_{i_1,i_2,\cdots,  i_n,0}|_{J_1\Box \cdots \Box\{1\}\Box\cdots \Box J_1})]\\
&\hspace{10mm}   +  [\sum \limits_{0\leq i_k\leq M_k-1 \atop
1\leq k\leq n} (-1)^{T(\sigma_{i_1,i_2,\cdots,i_n,0})+n+1}(F_{i_1,i_2,\cdots, i_n,0}-F_{i_1,i_2,\cdots, i_n,1})]\\
& =  [\sum \limits_{s=1}^{n}  \sum\limits_{\substack{0\leq i_k\leq M_k-1 \\ 
1\leq k\leq n \\ k\neq s}} (-1)^{T(\sigma_{i_1,i_2,\cdots,i_{s-1},i_{s+1},\cdots,i_n,0})+s} \bigg(\sum\limits_{0\leq i_s\leq m_s-1} (-1)^{2t(\sigma^{s}_{i_1,i_2,\cdots,i_n,0}(0),\sigma^{s}_{i_1,i_2,\cdots,i_n,0}(1))}\\
&\hspace{10mm} (F_{i_1,i_2,\cdots,  i_n}|_{J_1\Box \cdots \Box\{i_s\}\Box\cdots \Box J_1}-F_{i_1,i_2,\cdots,  i_n}|_{J_1\Box \cdots \Box\{i_s+1\}\Box\cdots \Box J_1})\bigg) ]\\ 
& \hspace{10mm} +  [\sum \limits_{0\leq i_k\leq M_k-1 \atop
1\leq k\leq n} (-1)^{T(\sigma_{i_1,i_2,\cdots,i_n,0})+n+1} 
(F_{i_1,i_2,\cdots, i_n,0}\circ F_{n+1,0}-F_{i_1,i_2,\cdots, i_n,0} \circ F_{n+1,1})] \\
& =  [\sum \limits_{s=1}^{n}  \sum \limits_{\substack{0\leq i_k\leq M_k-1\\
1\leq k\leq n \\ k\neq s}} (-1)^{T(\sigma_{i_1,i_2,\cdots,i_{s-1},i_{s+1},\cdots,i_n,0})+s} \\
& \hspace{10mm} (F_{i_1,i_2,\cdots,0,\cdots,  i_n}|_{J_1\Box \cdots \Box\{0\}\Box\cdots \Box J_1}-F_{i_1,i_2,\cdots, \{m_s-1\},\cdots,i_n}|_{J_1\Box \cdots \Box\{m_s\}\Box\cdots \Box J_1} ) ]\\
& \hspace{10mm} +  [\sum \limits_{0\leq i_k\leq M_k-1\atop
1\leq k\leq n} (-1)^{T(\sigma_{i_1,i_2,\cdots,i_n,0})+n+1}(F_{i_1,i_2,\cdots, i_n,0}\circ F_{n+1,0}-F_{i_1,i_2,\cdots, i_n,0} \circ F_{n+1,1}) ] \\
& =[\sum \limits_{s=1}^{n}  \sum\limits_{\substack{0\leq i_k\leq M_k-1\\
1\leq k\leq n \\ k\neq s}} (-1)^{T(\sigma_{i_1,i_2,\cdots,i_{s-1},i_{s+1},\cdots,i_n,0})+s} \\
& \hspace{10mm} (F_{i_1,i_2,\cdots,0,\cdots,  i_n}|_{J_1\Box \cdots \Box\{0\}\Box\cdots \Box J_1}-F_{i_1,i_2,\cdots, \{m_s-1\},\cdots,i_n}|_{J_1\Box \cdots \Box\{m_s\}\Box\cdots \Box J_1} ) ] \\
& \hspace{10mm}+ [(-1)^{n+1}(h_n(f\circ h)-h_n(g\circ h'))].
\end{align*}
Hence 
\begin{align*} h_n(f\circ h)-h_n(g\circ h') & = [(-1)^{n+1}(\partial^{c}_{n+1} (h_{n+1}(F))]\\ 
& \hspace{-20mm}-[\sum \limits_{s=1}^{n}  \sum \limits_{\substack{ 0\leq i_k\leq M_k-1 \\
1\leq k\leq n \\ k\neq s}} (-1)^{T(\sigma_{i_1,i_2,\cdots,i_{s-1},i_{s+1},\cdots,i_n,0})} (F_{i_1,i_2,\cdots,0,\cdots, i_n,0}-F_{i_1,i_2,\cdots,m_s,\cdots, i_n,0})]. 
\end{align*} 
Since $$\sum \limits_{s=1}^{n}  \sum \limits_{\substack{0\leq i_k\leq M_k-1 \\
    1\leq k\leq n\\ k\neq s}} (-1)^{T(\sigma_{i_1,i_2,\cdots,i_{s-1},i_{s+1},\cdots,i_n,0})} (F_{i_1,i_2,\cdots,0,\cdots, i_n,0}-F_{i_1,i_2,\cdots,m_s,\cdots, i_n,0})\in C_{n+1}^c(A)$$  
it follows that $$h_n(f\circ h)-h_n(g\circ h') \in \partial_{n+1}^c(C^c_{n+1}(G)/C^c_{n+1}(A)).$$ Thus $H_n([f\circ h]) = [h_n(f\circ h)]=[h_n(g\circ h')] = H_n[g\circ h']$. Then by Lemma \ref{subind}, we have $H_n([\{f\}])= H_n([\{f\circ h\}])  = H_n[\{g\circ h'\}] = H_n([\{g\}])$.
\vspace{5mm} 
\medskip

\noindent
\textit{$H_{n}$ is a homomorphism.}  Now suppose that there are digraph maps 
$$f:(J_{m_{i}}^{\Box n},\partial J_{m_{i}}^{\Box n}, \overline{J}_{m_i}^{\Box (n-1)})\rightarrow (G,A,x_0)\quad \text{and} g: (J_{l_{i}}^{\Box n},\partial J_{l_{i}}^{\Box n}, \overline{J}_{l_i}^{\Box (n-1)})\rightarrow (G,A,x_0).$$ 
We prove that $H_n([f\cdot g]) = H_n([f])\cdot H_n([g])$ for $n\geq 2.$ By Remark~\ref{mujindependence}, $f\cdot g$ is obtained by first adjusting the lengths in each coordinate except the second so they match, and then multiplying in any box coordinate but the first coordinate. With this in mind, let $M_j = max\{m_j,l_j\}$ for $j\neq 2.$ Define 
$$\widetilde{f}: (J_{M_1},\partial J_{M_1}, M_1)\Box (J_{m_2},\partial J_{m_2})\Box (J_{M_i},\partial J_{M_i})^{\Box (n-2)} \rightarrow (G,A,x_0)$$
by $$\widetilde{f}(i_1,i_2,...,i_n) = \left\{
                     \begin{array}{ll}
                       f(i_1,i_2,...,i_n) , & \hbox{$i_j \leq  m_j \text{, }j\geq 1$;} \\
                       x_0, & \hbox{$\text{ else}$,}
                     \end{array}
                   \right
.$$ and define 
$$\widetilde{g}:  (J_{M_1},\partial J_{M_1}, M_1)\Box (J_{l_2},\partial J_{l_2})\Box (J_{M_i},\partial J_{M_i})^{\Box (n-2)} \rightarrow (G,A,x_0)$$ 
by $$\widetilde{g}(i_1,i_2,...,i_n) = \left\{
                     \begin{array}{ll}
                     g(i_1,i_2,...,i_n) , & \hbox{$i_j \leq  l_j, \text{, }j\geq 1 $;} \\
                       x_0 , & \hbox{$\text{ else}$.}
                     \end{array}
                   \right.$$
                   Clearly, $\widetilde{f}$ and $\widetilde{g}$ are the subdivisions of $f$ and $g$ respectively. 
Then 
$$f\cdot g :  (J_{M_1},\partial J_{M_1}, M_1)\Box (J_{m_2+l_2+1},\partial J_{m_2+l_2+1})\Box (J_{M_i},\partial J_{M_i})^{\Box (n-2)} \rightarrow (G,A,x_0)$$ 
is defined by 
$$f\cdot g = \widetilde{f}\vee \widetilde{g} = \left\{
                     \begin{array}{ll}
                      \widetilde{f}(i_1,i_2,...,i_n), & \hbox{$0\leq i_2\leq m_2$;} \\
                        \widetilde{g}(i_1,i_2-m_2,...,i_n), & \hbox{$m_2\leq i_2\leq m_2+l_2+1$.}
                     \end{array}
                   \right
.$$
Observe that: 
\begin{align*} h_{n}(f\cdot g)&= [\sum \limits_{\substack{0\leq i_2\leq m_2+l_2+1\\ 0\leq i_k\leq M_k-1\\
k\neq 2}}(-1)^{T(\sigma_{i_1,i_2,\cdots,i_{n}})}(\widetilde{f}\vee \widetilde{g})_{i_1,i_2,\cdots, i_{n}}] \\
& =[\sum \limits_{\substack{0\leq i_2\leq m_2-1\\ 0\leq i_k\leq M_k-1\\
k\neq 2}}(-1)^{T(\sigma_{i_1,i_2,\cdots,i_n})}
(\widetilde{f}\vee \widetilde{g})_{i_1,i_2,\cdots, i_{n}}] \\
&\hspace{10mm} + [\sum \limits_{\substack{m_2\leq i_2\leq m_2+l_2\\ 0\leq i_k\leq M_k-1\\
k\neq 2}}(-1)^{T(\sigma_{i_1,i_2,\cdots,i_{n}})}(\widetilde{f}\vee \widetilde{g})_{i_1,i_2,\cdots, i_{n}}] \\
&=  [\sum \limits_{\substack{0\leq i_2\leq m_2-1\\ 0\leq i_k\leq M_k-1\\
k\neq 2}}(-1)^{T(\sigma_{i_1,i_2,\cdots,i_n})}
\widetilde{f}_{i_1,i_2,\cdots, i_{n}} + \hspace{-5mm}\sum \limits_{\substack{m_1\leq i_1\leq m_1+l_1\\ 0\leq i_k\leq M_k-1\\
2\leq k\leq n}}(-1)^{T(\sigma_{i_1,i_2,\cdots,i_{n}})}\widetilde{g}_{i_1,i_2,\cdots, i_{n}} ]\\
& = h_n(\widetilde{f}) +  h_n(\widetilde{g})
\end{align*} 
Since $f$ is obtained from $\widetilde{f}$ by composing with a shrinking map, $H_n([\widetilde{f}]) = H_n([f]) $, and similarly $H_n([\widetilde{g}]) = H_n([g]) $. Thus $H_n$ is a group homomorphism for $n\geq 2$.
\end{proof}
\end{thm}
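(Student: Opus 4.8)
The plan is to verify three properties of $H_n$ in turn: that $h_n(\{f\})$ represents a relative cycle, so that its class in $H_n^c(G,A)$ is defined; that $H_n$ is invariant under $F$-homotopy; and that for $n\geq 2$ it carries concatenation to the additive structure on homology. Since Lemma~\ref{subind} already gives subdivision-invariance of $h_n$, each of the latter two steps reduces to checking a single direct homotopy $\rightrightarrows$, respectively a single concatenation on a common grid.

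\textbf{Cycle condition.} First I would compute $\partial^c_n h_n(\{f\})$ directly from $\partial^c_n = \sum_s (-1)^s d^c_s$ with $d^c_s f = f\circ F_{s,0} - f\circ F_{s,1}$. Expanding the defining alternating sum over all unit subcubes $I^{\Box n}_{i_1,\dots,i_n}$ and interchanging the order of summation, the essential point is a telescoping cancellation: for each axis $s$ the inner sum $\sum_{i_s}(f|_{\{i_s\}}-f|_{\{i_s+1\}})$ collapses to $f|_{\{0\}}-f|_{\{m_s\}}$, with the inversion-number weight $T(\sigma_{i_1,\dots,i_n})$ combining with the face sign $(-1)^s$ through a factor $(-1)^{2t(\sigma^s)}=1$ so that all interior faces cancel. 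What survives lies in $J_1\Box\cdots\Box\{0\}\Box\cdots\Box J_1$ and $J_1\Box\cdots\Box\{m_s\}\Box\cdots\Box J_1$, that is in $\partial J^{\Box n}_{m_i}$. Because $f$ sends the boundary into $A$, these chains lie in $C^c_{n-1}(A)$, so $\partial^c_n h_n(\{f\})=[0]$ in $C^c_{n-1}(G)/C^c_{n-1}(A)$ and $[h_n(\{f\})]$ is a genuine relative cycle.

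\textbf{Homotopy invariance.} By subdivision-invariance it suffices to treat a one-step $F$-homotopy $f\circ h \rightrightarrows g\circ h'$, realized by $F\colon J^{\Box n}_{M_i}\Box J_1 \to G$ with $F|_{\cdot\Box\{0\}}=f\circ h$ and $F|_{\cdot\Box\{1\}}=g\circ h'$. I would apply the boundary computation of the previous step to the $(n+1)$-chain $h_{n+1}(F)$: the $(n+1)^{\mathrm{st}}$ direction contributes exactly $\pm\bigl(h_n(f\circ h)-h_n(g\circ h')\bigr)$, while the remaining faces lie on $\partial J^{\Box n}_{M_i}\Box J_1$ and hence map into $A$. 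Thus $h_n(f\circ h)-h_n(g\circ h')$ differs from $\pm\partial^c_{n+1}h_{n+1}(F)$ by a chain in $C^c_\ast(A)$, so the two represent the same class in $H_n^c(G,A)$; combined with Lemma~\ref{subind} this yields $H_n(f)=H_n(g)$.

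\textbf{Homomorphism property.} For $n\geq 2$, Remark~\ref{mujindependence} lets me compute $f\cdot g$ by first extending $f,g$ to subdivisions $\widetilde f,\widetilde g$ on a common grid and concatenating in the second coordinate. Since subdivisions leave $H_n$ unchanged, I would evaluate $h_n$ directly on $\widetilde f\vee\widetilde g$: the defining sum splits along the concatenation seam, the subcubes in the $\widetilde f$-block giving $h_n(\widetilde f)$ and those in the $\widetilde g$-block giving $h_n(\widetilde g)$, while the intervening constant slab contributes only degenerate chains. Hence $h_n(f\cdot g)=h_n(\widetilde f)+h_n(\widetilde g)$, and passing to classes gives $H_n([f\cdot g])=H_n([f])+H_n([g])$. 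The main obstacle throughout is the sign bookkeeping in the cycle computation: one must check that the inversion-number weights $T(\sigma_{i_1,\dots,i_n})$ interact with the alternating face signs precisely so that interior faces cancel and the surviving boundary faces are correctly identified. This same combinatorial care drives the homotopy-invariance step and is the most delicate part of the argument.
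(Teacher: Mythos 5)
Your proposal is correct and follows essentially the same route as the paper's proof: the telescoping expansion of $\partial^c_n h_n(\{f\})$ leaving only boundary faces in $C^c_{n-1}(A)$, the reduction of homotopy invariance to a one-step direct homotopy whose prism chain $h_{n+1}(F)$ exhibits $h_n(f\circ h)-h_n(g\circ h')$ as a relative boundary, and the splitting of $h_n(\widetilde f\vee\widetilde g)$ along the concatenation seam in the second coordinate. No gaps to report.
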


Further, the Hurewicz homomorphism is compatible with digraph maps between digraph pairs, as described in the following proposition.
\begin{prop}\label{cubnat} Any digraph map $f: (G,A)\rightarrow (H,B)$ of digraph pairs induces a commutative diagram 
 $$\xymatrix{
    \overline{\pi}_{n}(G,A) \ar[d]_{H_n} \ar[r]^{f_n} & \overline{\pi}_{n}(H,B) \ar[d]^{H_n}  \\
    H^c_{n}(G,A)  \ar[r]^{f^c_n} & H^c_{n}(H,B),             } $$ 
where $H_n$ is the Hurewicz homomorphism.
\begin{proof}
    By Proposition~\ref{inducedhom}, $f$ induces a map $f_n:  \overline{\pi}_{n}(G,A) \rightarrow  \overline{\pi}_{n}(H,B)$ for $n\geq 1$ and a homomorphism for $n\geq 2$. By \cite[Proposition 10]{GMJ21}, $f$ also induces a homomorphism $f^c_n:  H^c_{n}(G,A) \rightarrow   H^c_{n}(H,B)$ for $n\geq 0.$  
    
Observe that for any $\gamma:(J_{m_i}^{\Box n},\partial J_{m_i}^{\Box n},\bar{J}_{m_i}^{\Box (n-1)})\rightarrow (G,A,x_0)$ representing a class in $ \overline{\pi}_{n}(G,A)$, we have
\begin{align*} f_n^c
\circ H_n([\gamma])&= f_n^c ([\sum \limits_{0\leq i_k\leq m_k-1\atop
1\leq k\leq n}(-1)^{T(\sigma_{i_1,i_2,\cdots,i_n})}\gamma_{i_1,i_2,\cdots, i_n}]) \\
& = [\sum \limits_{0\leq i_k\leq m_k-1\atop
1\leq k\leq n}(-1)^{T(\sigma_{i_1,i_2,\cdots,i_n})} f\circ \gamma_{i_1,i_2,\cdots, i_n}]
\end{align*}
and 
\begin{align*} 
H_n\circ f_n([\gamma])&= H_n([f\circ \gamma]) \\
 &=  [\sum \limits_{0\leq i_k\leq m_k-1\atop
1\leq k\leq n}(-1)^{T(\sigma'_{i_1,i_2,\cdots,i_n})}(f\circ\gamma)_{i_1,i_2,\cdots, i_n}]. 
\end{align*}
Since $f\circ \gamma: (J_{m_i}^{\Box n},\partial J_{m_i}^{\Box n},\bar{J}_{m_i}^{\Box (n-1)})\rightarrow (H,B,y_0) $ and $\gamma:(J_{m_i}^{\Box n},\partial J_{m_i}^{\Box n},\bar{J}_{m_i}^{\Box (n-1)})\rightarrow (G,A,x_0) $ have the same domain $J_{m_i}^{\Box n}$, we have $T(\sigma'_{i_1,i_2,\cdots,i_n}) = T(\sigma_{i_1,i_2,\cdots,i_n})$. Hence $ f_n^c
\circ H_n([\gamma]) = H_n\circ f_n([\gamma])$.
\end{proof}
\end{prop}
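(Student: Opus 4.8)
The plan is to verify commutativity of the square directly on representatives, reducing the claim to a term-by-term comparison of two signed sums of singular cubical chains. First I would recall the three maps involved. By Proposition~\ref{inducedhom}, $f_n$ is induced by post-composition, $[\gamma]\mapsto[f\circ\gamma]$; by~\cite{GMJ21}, the map $f_n^c$ on cubical homology is likewise induced by post-composing chains with $f$; and $H_n$ sends $[\gamma]$ to the class of $\sum_{0\le i_k\le m_k-1}(-1)^{T(\sigma_{i_1,\ldots,i_n})}\gamma_{i_1,\ldots,i_n}$, where $\gamma_{i_1,\ldots,i_n}$ is the restriction of $\gamma$ to the unit subcube based at $(i_1,\ldots,i_n)$, precomposed with the fixed isomorphism $\sigma_{i_1,\ldots,i_n}\colon J_1^{\Box n}\to I_{i_1,\ldots,i_n}^{\Box n}$.

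I would then evaluate both composites on a chosen representative $\gamma\colon(J_{m_i}^{\Box n},\partial J_{m_i}^{\Box n},\overline{J}_{m_i}^{\Box(n-1)})\to(G,A,x_0)$. Applying $H_n$ and then $f_n^c$ yields the class of $\sum(-1)^{T(\sigma_{i_1,\ldots,i_n})}(f\circ\gamma_{i_1,\ldots,i_n})$, since $f_n^c$ merely post-composes each unit cube by $f$. Applying $f_n$ and then $H_n$ yields the class of $\sum(-1)^{T(\sigma'_{i_1,\ldots,i_n})}(f\circ\gamma)_{i_1,\ldots,i_n}$, where $\sigma'$ denotes the subcube isomorphisms used to decompose $f\circ\gamma$.

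Matching these two sums rests on two observations. First, the cube decomposition commutes with post-composition: $(f\circ\gamma)_{i_1,\ldots,i_n}=f\circ\gamma_{i_1,\ldots,i_n}$, because restricting to a unit subcube and precomposing with $\sigma_{i_1,\ldots,i_n}$ involve only the domain grid, whereas $f$ acts on the target. Second, and this is where I expect the only genuine care to be needed, the two families of signs must coincide. The decisive point is that $\gamma$ and $f\circ\gamma$ share the identical domain grid $J_{m_i}^{\Box n}$, and each inversion number $T(\sigma_{i_1,\ldots,i_n})$ is determined entirely by the arrow orientations of that grid and not by the target; hence $T(\sigma'_{i_1,\ldots,i_n})=T(\sigma_{i_1,\ldots,i_n})$ for every multi-index. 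A term-by-term comparison then gives $f_n^c\circ H_n([\gamma])=H_n\circ f_n([\gamma])$.

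Finally, I would note that the equality already holds at the chain level, so no compatibility with the cubical boundary operator needs re-checking; both composites land in $H_n^c(H,B)$. Independence of the representative $\gamma$ is not a separate worry, since $H_n$ is well-defined by Lemma~\ref{subind} and Theorem~\ref{Hure}, while $f_n$ and $f_n^c$ are well-defined by Proposition~\ref{inducedhom} and~\cite{GMJ21} respectively. Thus the substantive work is the sign bookkeeping of the previous paragraph, which the shared-domain observation resolves cleanly.
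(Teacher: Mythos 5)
Your proposal is correct and follows essentially the same route as the paper: evaluate both composites on a representative $\gamma$, observe that restriction to unit subcubes commutes with post-composition by $f$, and note that the inversion numbers agree because $\gamma$ and $f\circ\gamma$ share the domain grid $J_{m_i}^{\Box n}$. The only difference is that you make the identity $(f\circ\gamma)_{i_1,\ldots,i_n}=f\circ\gamma_{i_1,\ldots,i_n}$ explicit where the paper leaves it implicit, which is a minor improvement in exposition rather than a different argument.
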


Next, we show that the Hurewicz homomorphism $H_n: \overline{\pi}_n(G,A)\rightarrow H^c_n(G,A)$ is natural with respect to the boundary maps in the long exact sequences of relative digraph homotopy groups and cubical digraph homology groups. The boundary map for relative digraph homotopy groups is $\partial_{n+1}: \overline{\pi}_{n+1}(G,A)\rightarrow \overline{\pi}_n(A)$. By~\cite[Proposition 7]{GMJ21}, for any digraph pair $(G,A)$ there is a long exact sequence of cubical digraph homology groups
$$\cdots \rightarrow  H^c_{n+1}(G,A) \stackrel{\xi^c_{n+1}}{\rightarrow} H^c_{n}(A) \stackrel{i^c_n}{\rightarrow} H_n^c(G)\stackrel{q^c_n}{\rightarrow}  H_n^c(G,A)\rightarrow \cdots$$ where $\xi_{n+1}^c(\tau) =\sum \limits_{i=1}^n (-1)^i(\tau\circ F_{i,0}-\tau\circ F_{i,1}).$ 
\begin{prop}\label{commut}
Let $(G,A)$ be a digraph pair. Then there is a commutative diagram  $$\xymatrix{
    \overline{\pi}_{n+1}(G,A) \ar[d]_{H_{n+1}} \ar[r]^-{\partial_{n+1}} & \overline{\pi}_{n}(A) \ar[d]^{H_n}  \\
    H^c_{n+1}(G,A)  \ar[r]^-{\xi^c_{n+1}} & H^c_{n}(A).             } $$  
 \begin{proof}
Let $\gamma:(J_{m_i}^{\Box (n+1)},\partial J_{m_i}^{\Box (n+1)},\bar{J}_{m_i}^{\Box n})\rightarrow (G,A,x_0)$ be a digraph map. Since $$(J_{m_i}^{\Box (n+1)},\partial J_{m_i}^{\Box (n+1)},\bar{J}_{m_i}^{\Box n}) = (J_{m_1},\partial J_{m_1},m_1) \Box (J_{m_i},\partial J_{m_i})^{\Box n},$$ we denote a vertex in $(J_{m_i}^{\Box (n+1)},\partial J_{m_i}^{\Box (n+1)},\bar{J}_{m_i}^{\Box n})$  by $(i,a)$ for $i \in J_{m_1}$ and $a\in J_{m_i}^{\Box n}$.
By the definition of $\partial_{n+1}$ in~(\ref{jdef}), $\partial_{n+1}([\gamma]) = \widetilde{\gamma}$, where $\widetilde{\gamma}(a) = \gamma(0,a)$. Therefore 
\begin{align*} 
H_n\circ \partial_{n+1}([\gamma])= H_n([\widetilde{\gamma}]) =
  [\sum \limits_{0\leq i_k\leq m_k-1\atop
2\leq k\leq n+1}(-1)^{T(\sigma_{i_2,\cdots,i_{n+1}})}\widetilde{\gamma}_{i_2,\cdots, i_{n+1}}]. 
\end{align*} 
This equation will be used at the end of the string of equalities 

\begin{align*} 
& \xi_{n+1}^c \circ H_{n+1}([\gamma])\\
&= \xi_{n+1}^c ([\sum \limits_{0\leq i_k\leq m_k-1\atop
1\leq k\leq n+1}
(-1)^{T(\sigma_{i_1,i_2,\cdots,i_{n+1}})}\gamma_{i_1,i_2,\cdots, i_{n+1} } ]) \\
& =    \xi_{n+1}^c ([\sum \limits_{0\leq i_1\leq m_1} \sum \limits_{0\leq i_k\leq m_k-1\atop
2\leq k\leq n+1}(-1)^{T(\sigma_{i_1,i_2,\cdots,i_{n+1}})}\gamma_{i_1,i_2,\cdots, i_{n+1}}])   \\
& = [\sum \limits_{0\leq i_1\leq m_1} \sum \limits_{0\leq i_k\leq m_k-1\atop
2\leq k\leq n+1}(-1)^{T(\sigma_{i_1,i_2,\cdots,i_{n+1}})} \xi_{n+1}^c(\gamma_{i_1,i_2,\cdots, i_{n+1}})] \\
& = [\sum \limits_{0\leq i_1\leq m_1} \sum \limits_{0\leq i_k\leq m_k-1\atop
2\leq k\leq n+1}(-1)^{T(\sigma_{i_1,\cdots,i_{n+1}})} 
( \sum\limits_{j=1}^{n+1} (-1)^j(\gamma_{i_1,\cdots, i_{n+1}} \circ F_{j,0}-\gamma_{i_1,\cdots, i_{n+1}} \circ F_{j,1})]\\ 
& = [\sum \limits_{0\leq i_1\leq m_1} \sum \limits_{0\leq i_k\leq m_k-1\atop
2\leq k\leq n+1}(-1)^{T(\sigma_{i_1,\cdots,i_{n+1}})} 
( \sum\limits_{j=2}^{n+1} (-1)^j(\gamma_{i_1,\cdots, i_{n+1}} \circ F_{j,0}-\gamma_{i_1,\cdots, i_{n+1}} \circ F_{j,1}) \\
& \hspace{10mm} + \sum \limits_{0\leq i_1\leq m_1} \sum \limits_{0\leq i_k\leq m_k-1\atop
2\leq k\leq n+1}(-1)^{T(\sigma_{i_1,\cdots,i_{n+1}})}(\gamma_{i_1,\cdots, i_{n+1}} \circ F_{j,1}-\gamma_{i_1,\cdots, i_{n+1}} \circ F_{j,0})  ]  \\
& = [\sum \limits_{0\leq i_1\leq m_1} \sum \limits_{0\leq i_k\leq m_k-1\atop
2\leq k\leq n+1}\sum\limits_{j=2}^{n+1}(-1)^{T(\sigma_{i_1,\cdots,i_{n+1}})+j} 
(\gamma_{i_1,\cdots, i_{n+1}} \circ F_{j,0}-\gamma_{i_1,\cdots, i_{n+1}} \circ F_{j,1})] \\
& \hspace{10mm} +  [\sum \limits_{0\leq i_k\leq m_k-1\atop
2\leq k\leq n+1}(-1)^{T(\sigma_{i_2,\cdots,i_{n+1}})}(\sum \limits_{0\leq i_1\leq m_1} (-1)^{t(\sigma^1_{i_1,\cdots,i_{n+1}}(0), \sigma^1_{i_1,\cdots,i_{n+1}}(1))} (\gamma_{i_1,\cdots, i_{n+1}} \circ F_{j,1}-\gamma_{i_1,\cdots, i_{n+1}} \circ F_{j,0})  ] \\
& = [
\sum \limits_{0\leq i_1\leq m_1} \sum \limits_{0\leq i_k\leq m_k-1\atop 2\leq k\leq n+1, k\neq j} \sum\limits_{j=2}^{n+1}(-1)^{T(\sigma_{i_1,\cdots,i_{j-1},i_{j+1},\cdots,i_{n+1}})+j} \\
& \hspace{10mm} (\sum \limits_{0\leq i_j\leq m_j-1} (-1)^{2t(\sigma^j_{i_1,\cdots,i_{n+1}}(0), \sigma^j_{i_1,\cdots,i_{n+1}}(1))} (\gamma_{i_1,\cdots, i_{n+1}}|_{J_1\Box \cdots\{i_j\}\Box\cdots J_1}-\gamma_{i_1,\cdots, i_{n+1}}|_{J_1\Box \cdots\{i_j+1\}\Box\cdots J_1}) )]\\
& \hspace{10mm} + [ \sum \limits_{0\leq i_k\leq m_k-1\atop
2\leq k\leq n+1}(-1)^{T(\sigma_{i_2,\cdots,i_{n+1}})}\\
& \hspace{10mm}(\sum \limits_{0\leq i_1\leq m_1} (-1)^{2t(\sigma^1_{i_1,\cdots,i_{n+1}}(0), \sigma^1_{i_1,\cdots,i_{n+1}}(1))} (\gamma_{\{0\},\cdots, i_{n+1}}|_{0\Box J_1^{n}}-\gamma_{m_1-1,\cdots, i_{n+1}}|_{\{m_1\}\Box J_1^{n}})  ]  
 \end{align*}
 \begin{align*}
& = [
\sum \limits_{0\leq i_1\leq m_1} \sum \limits_{0\leq i_k\leq m_k-1\atop 2\leq k\leq n+1, k\neq j} \sum\limits_{j=2}^{n+1}(-1)^{T(\sigma_{i_1,\cdots,i_{j-1},i_{j+1},\cdots,i_{n+1}})+j}  \\
 & \hspace{10mm} (\gamma_{i_1,\cdots,\{0\},i_{j+1},\cdots, i_{n+1}}|_{J_1\Box\cdots\{0\}\cdots J_1}-\gamma_{i_1,\cdots,\{m_j-1\},\cdots, i_{n+1}}|_{J_1\Box\cdots\{m_j\}\cdots J_1})]\\
& \hspace{10mm} + [ \sum \limits_{0\leq i_k\leq m_k-1\atop
2\leq k\leq n+1}(-1)^{T(\sigma_{i_2,\cdots,i_{n+1}})}
 (\gamma_{\{0\},\cdots, i_{n+1}}|_{0\Box J_1^{n}}-\gamma_{m_1-1,\cdots, i_{n+1}}|_{\{m_1\}\Box J_1^{n}})  ]  \\
& =[0]+  [ \sum \limits_{0\leq i_k\leq m_k-1\atop
2\leq k\leq n+1}(-1)^{T(\sigma_{i_2,\cdots,i_{n+1}})}
 \gamma_{\{0\},\cdots, i_{n+1}}|_{0\Box J_1^{n}}  ] \\
 & =  [ \sum \limits_{0\leq i_k\leq m_k-1\atop
2\leq k\leq n+1}(-1)^{T(\sigma_{i_2,\cdots,i_{n+1}})}
 \widetilde{\gamma}_{i_2,\cdots, i_{n+1}}  ] \\
 & = H_n\circ \partial_{n+1}([\gamma]).
\end{align*}
This proves the proposition. 
    \end{proof}
\end{prop}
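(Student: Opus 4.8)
The plan is to verify that the square commutes by chasing a single class $[\gamma]$ around both routes and checking that the two resulting cubical chains represent the same element of $H^c_n(A)$. Fix a representative $\gamma\colon(J_{m_i}^{\Box(n+1)},\partial J_{m_i}^{\Box(n+1)},\overline{J}_{m_i}^{\Box n})\to(G,A,x_0)$ and use the decomposition $(J_{m_1},\partial J_{m_1},m_1)\Box(J_{m_i},\partial J_{m_i})^{\Box n}$ to write a vertex as $(i,a)$ with $i\in J_{m_1}$ and $a\in J_{m_i}^{\Box n}$. First I would record the clockwise composite: by the definition of $\partial_{n+1}$ in~(\ref{jdef}), $\partial_{n+1}([\gamma])=[\widetilde\gamma]$ with $\widetilde\gamma(a)=\gamma(0,a)$, and applying the Hurewicz formula gives the explicit cycle $[\sum_{0\le i_k\le m_k-1,\,2\le k\le n+1}(-1)^{T(\sigma_{i_2,\dots,i_{n+1}})}\widetilde\gamma_{i_2,\dots,i_{n+1}}]$. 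This is the target chain I must reach along the other route.

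Next I would compute $\xi^c_{n+1}\circ H_{n+1}([\gamma])$. Since $\xi^c_{n+1}$ is a chain-level homomorphism it may be applied termwise to the Hurewicz image $\sum(-1)^{T(\sigma_{i_1,\dots,i_{n+1}})}\gamma_{i_1,\dots,i_{n+1}}$, and each $\xi^c_{n+1}(\gamma_{i_1,\dots,i_{n+1}})$ expands into the signed sum of the faces $\gamma_{i_1,\dots,i_{n+1}}\circ F_{j,0}$ and $\gamma_{i_1,\dots,i_{n+1}}\circ F_{j,1}$ for $1\le j\le n+1$. The decisive manoeuvre, carried out exactly as in the boundary-map computation inside the proof of Theorem~\ref{Hure}, is to split the inversion number $T(\sigma_{i_1,\dots,i_{n+1}})$ into its coordinate-$j$ contribution $t(\sigma^j(0),\sigma^j(1))$ plus the contribution of the remaining coordinates; the factor $(-1)^{2t(\sigma^j(0),\sigma^j(1))}=1$ then lets the alternating sum over $i_j$ telescope, so that in each coordinate line only the two extreme faces at $i_j=0$ and $i_j=m_j$ survive.

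I would then separate the $j=1$ term from the $j\ge2$ terms and identify which surviving faces are trivial. For $j\ge2$ the two extreme faces lie in $J_{m_1}\Box\partial J_{m_i}^{\Box n}\subset\overline{J}_{m_i}^{\Box n}$, which $\gamma$ collapses to the basepoint $x_0$; being constant they are degenerate and hence vanish in the quotient complex, so the whole $j\ge2$ part is $[0]$. For $j=1$ the telescoping leaves the bottom slice at first coordinate $0$ and the top slice at first coordinate $m_1$. The top lies in $m_1\Box J_{m_i}^{\Box n}\subset\overline{J}_{m_i}^{\Box n}$ and is again collapsed to $x_0$, hence degenerate, while the bottom slice is precisely $\widetilde\gamma_{i_2,\dots,i_{n+1}}$ carrying the residual sign $(-1)^{T(\sigma_{i_2,\dots,i_{n+1}})}$. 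This reproduces exactly the cycle obtained from the clockwise composite, giving $\xi^c_{n+1}\circ H_{n+1}([\gamma])=H_n\circ\partial_{n+1}([\gamma])$.

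The main obstacle I anticipate is not the architecture of the argument but the sign bookkeeping: one must track the inversion numbers $T(\sigma_{i_1,\dots,i_{n+1}})$ through the expansion of $\xi^c_{n+1}$ and confirm that the coordinate-$j$ contributions pair up so that interior faces cancel and the telescoping collapses to the extreme faces with the correct residual signs. A secondary point requiring care is to check that the surviving boundary slices genuinely land in $\overline{J}_{m_i}^{\Box n}$ rather than merely in $\partial J_{m_i}^{\Box(n+1)}$, since only membership in $\overline{J}_{m_i}^{\Box n}$ forces them to map to $x_0$ and thus to be degenerate, which is what makes them vanish in the absolute homology $H^c_n(A)$ and isolates the single bottom slice $\widetilde\gamma$.
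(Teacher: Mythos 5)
Your proposal is correct and follows essentially the same route as the paper: expand $\xi^c_{n+1}$ termwise on the Hurewicz chain, split off the coordinate-$j$ inversion number so the interior faces telescope, observe that the extreme faces for $j\geq 2$ and the top slice at $m_1$ land in $\overline{J}_{m_i}^{\Box n}$ and are therefore constant at $x_0$ and degenerate, leaving only the bottom slice $\widetilde{\gamma}$ with the residual sign $(-1)^{T(\sigma_{i_2,\dots,i_{n+1}})}$. Your remark that the vanishing requires membership in $\overline{J}_{m_i}^{\Box n}$ rather than merely in $\partial J_{m_i}^{\Box(n+1)}$ is exactly the point the paper's computation relies on, stated more explicitly than in the paper itself.
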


\subsection{The digraph suspension homomorphism} We begin with a definition.
\begin{defn}\cite[Definition 5.9]{GLMY20}
The \emph{cone} of a digraph \( X \) is a digraph $CX$ whose vertex set is that of $X$ together with one additional vertex $a$, and whose arrow set consists of those for $X$ together with additional arrows of the form \( b \to a \) for every \( b \in X \). If $X$ is a based digraph with base-point $x_{0}$ then $CX$ is also based with base-point $x_{0}$. The vertex \( a \) is called the cone vertex.
\end{defn}
\begin{example}\label{cone}Let $X$ be a square. The cone $CX$ of $X$ is shown as follows:

\begin{center}
\begin{tikzpicture}[scale =0.8]
  \begin{scope}[shift={(-1,0)}] 

  \draw[->, thick] (-4,0) -- (-3.5,0.95);
    \draw[->, thick] (-4,0) -- (-1.05,0);
    \draw[->, thick] (-3.5,1) -- (-0.55,1);
    \draw[->, thick] (-0.5,0.95) --  (-1,0);

    \node[above, black] at (-3.5,1) {1};
    \node[above, black] at (-0.5,1) {2};
    \node[below, black] at (-1,0) {3};
    \node[below, black] at (-4,0) {0};
    \node[above] at (-2.5,-1) {$X$};
    \filldraw (-1,0) circle (.04)
(-3.5,1) circle (.04)
(-4 ,0) circle (.04)
(-0.5,1) circle (.04);
 \end{scope}

\begin{scope}[shift={(1,0)}] 
  \node[above, red] at (3,2) {$a$};
    \node[above, black] at (1.5,1) {1};
     \node[above, black] at (4.5,1) {2};
     \node[below, black] at (4,0) {3};
   \node[below, black] at (1,0) {0};
  \node[above] at (2.7,-1.5) {$CX$};

   \draw[->, thick] (1,0) -- (3.9,0);
   \draw[->, thick] (1,0) -- (1.4,0.9);
    \draw[->, thick] (1.5,1) -- (4.4,1);
    \draw[->,thick] (4.4,0.9) -- (4,0);
  \draw[->,red, thick] (1,0) -- (2.95,1.95);
   \draw[->, red,thick] (4,0) -- (3.05,1.95);
    \draw[->, red,thick] (1.5,1) -- (2.95,1.95);
    \draw[->, red, thick] (4.5,1) -- (3.05,1.95);

\filldraw (1,0) circle (.04)
(3,2) circle (.04)
(4 ,0) circle (.04)
(4.5,1) circle (.04)
(1.5,1) circle (.04);
\end{scope}
\end{tikzpicture}
\end{center}
\end{example}
\begin{defn}\cite[Definition 6.13]{GLMY20}
The \emph{suspension} of a digraph \( X \) is a digraph $\Sigma X$ whose vertex set is that of $X$ together with two additional vertices $a$ and $b$, and whose arrow set consists of those for $X$ together with additional arrows of the form \( c \to a \) and \( c \to b \) for every \( c \in X \).  If $X$ is a based digraph with base-point $x_{0}$ then $\Sigma X$ is also based with base-point $x_{0}$. The vertices \( a \) and \( b \) are called the suspension vertices.
\end{defn}
\begin{example}\label{susp}Let $X$ be a square. The suspension $\Sigma X$ of $X$ is shown as follows:

\begin{center}
\begin{tikzpicture}
    \begin{scope}[shift={(-1,-0.3)}] 
        \draw[->, thick] (-3.2,0) -- (-2.8,0.76);
        \draw[->, thick] (-3.2,0) -- (-0.84,0);
        \draw[->, thick] (-2.8,0.8) -- (-0.44,0.8);
        \draw[->, thick] (-0.4,0.76) -- (-0.8,0);

        \node[above, black] at (-2.8,0.8) {1};
        \node[above, black] at (-0.4,0.8) {2};
        \node[below, black] at (-0.8,0) {3};
        \node[below, black] at (-3.2,0) {0};
        \node[above] at (-2,-0.8) {$X$};

        \filldraw (-0.8,0) circle (.04)
        (-2.8,0.8) circle (.04)
        (-3.2 ,0) circle (.04)
        (-0.4,0.8) circle (.04);
    \end{scope}

    \begin{scope}[shift={(1,0)}] 
        \node[below, green] at (1.9,-0.7) {$b$};
        \node[above, red] at (2.1,1.4) {$a$};
        \node[above, black] at (1.05,0.7) {1};
        \node[above, black] at (3.15,0.7) {2};
        \node[below, black] at (2.8,0) {3};
        \node[below, black] at (0.7,0) {0};
        \node[above] at (2.1,-2) {$\Sigma X$};

        \draw[->, thick] (0.7,0) -- (2.73,0);
        \draw[->, thick] (0.7,0) -- (0.98,0.63);
        \draw[->, thick] (1.05,0.7) -- (3.08,0.7);
        \draw[->,thick] (3.08,0.63) -- (2.8,0);

        \draw[->,red, thick] (0.7,0) -- (2.07,1.37);
        \draw[->, red,thick] (2.8,0) -- (2.13,1.37);
        \draw[->, red,thick] (1.05,0.7) -- (2.07,1.37);
        \draw[->, red, thick] (3.15,0.7) -- (2.13,1.37);
        \draw[->,green, thick] (0.7,0) -- (1.9,-0.56);
        \draw[->, green,thick] (2.8,0) -- (1.9,-0.56);
        \draw[->, green,thick] (1.05,0.7) -- (1.9,-0.56);
        \draw[->, green, thick] (3.15,0.7) -- (1.8,-0.56);

        \filldraw (0.7,0) circle (.04)
        (2.1,1.4) circle (.04)
        (2.8 ,0) circle (.04)
        (3.15,0.7) circle (.04)
        (1.05,0.7) circle (.04)
        (1.9,-0.6) circle (.04);
    \end{scope}
\end{tikzpicture}
\end{center}
\end{example}
Observe that, as in the case of topological spaces, the suspension of $X$ is the union of two cones over $X$. Explicitly, $$\Sigma X= C^{+}X\cup_{X} C^{-}X$$ where $C^{+}X$ has cone vertex $a$ and $C^{-}X$ has cone vertex $b$.

Now observe that there is a commutative diagram of based digraph inclusions
 $$\xymatrix{
    X \ar[d] \ar[r] & C^{+}X \ar[d]  \\
    C^{-}X  \ar[r] & \Sigma X.             } $$
This implies that there is a map of digraph pairs
$f:(C^{+}X,X)\rightarrow (\Sigma X,C^{-}X)$. Proposition~\ref{mor} then implies that there is a commutative diagram of exact sequences
$$\xymatrix@C=0.5cm{
  \overline{\pi}_{n+1}(X) \ar[r]\ar[d] &  \overline{\pi}_{n+1}(C^{+}X) \ar[r]\ar[d] &
    \overline{\pi}_{n+1}(C^{+}X,X)\ar[r]^-{\partial_{n+1}}\ar[d]^{f_{n+1}}&
  \overline{\pi}_n(X) \ar[r]\ar[d]  & \overline{\pi}_n(C^{+}X)\ar[d] \\
   \overline{\pi}_{n+1}(C^{-}X) \ar[r] &  \overline{\pi}_{n+1}(\Sigma X) \ar[r]^-{j_{n+1}}&
    \overline{\pi}_{n+1}(\Sigma X,C^{-}X)\ar[r]&
  \overline{\pi}_n(C^{-}X) \ar[r]  & \overline{\pi}_n(\Sigma X)
  }$$
which is exact as sets for $n\geq 0$ and as groups for $n\geq 1$. By~\cite[Example~3.11]{GLMY15}, the cone $CX$ is contractible. Thus both $C^{+}X$ and $C^{-}X$ are contractible. By~\cite[Corollary 4.11]{LWYZ24}, a contractible digraph $Y$ has the property that $\overline{\pi}_{n}(Y)\cong\overline{\pi}_{n}(\ast)$ for a one-vertex digraph $\ast$, all of whose digraph homotopy groups are zero, and hence $\overline{\pi}_{n}(Y)\cong 0$ for all $n\geq 0$. Thus in the diagram of exact sequences above, both $j_{n+1}$ and $\partial_{n+1}$ are isomorphisms.

\begin{defn} For $n\geq 0$, define the \emph{suspension map} by the composite
$$E_n:\overline{\pi}_{n}(X)\stackrel{\partial_{n+1}^{-1}}{\longrightarrow}\overline{\pi}_{n+1}(C^{+}X,X)\stackrel{f_{n+1}}{\longrightarrow}\overline{\pi}_{n+1}(\Sigma X,C^{-}X)\stackrel{j_{n+1}^{-1}}{\longrightarrow}\overline{\pi}_{n+1}(\Sigma X).$$ 
\end{defn}
Observe that if $n\geq 1$ then $E_n$ is a group homomorphism since each of $j_{n+1}^{-1}$, $f_{n+1}$ and $\partial^{-1}_{n+1}$ are. Observe also that $E_n$ need not be an isomorphism if $f_{n+1}$ is not. 

By~(\ref{jdef}), the map $\overline{\pi}_{n+1}(\Sigma X)\stackrel{j_{n+1}}{\longrightarrow}\overline{\pi}_{n}(\Sigma X,C^{-}X)$ is defined as the composite 
$$j_{n+1}:\overline{\pi}_{n+1}(\Sigma X)\stackrel{\delta_{n}}{\longrightarrow}\overline{\pi}_{n}(P_{i})\stackrel{\theta_{n}}{\longrightarrow}\overline{\pi}_{n}(\overline{P}(\Sigma X,C^{-}X,x_{0}))\stackrel{\Phi_{n+1}^{-1}}{\longrightarrow}\overline{\pi}_{n+1}(\Sigma X,C^{-}X),$$ 
where $P_{i}$ is the mapping path digraph of the inclusion 
$C^{-}X\stackrel{i}{\rightarrow}\Sigma X$, 
so it is not obviously induced by a map of digraph pairs. But it will be useful to show that, in fact, it is.  

\begin{lem}\label{induce1} 
The map 
$\overline{\pi}_{n+1}(\Sigma X)\stackrel{j_{n+1}}{\longrightarrow}\overline{\pi}_{n}(\Sigma X,C^{-}X)$ 
is induced by the map of pairs 
$(\Sigma X,x_{0})\rightarrow (\Sigma X,C^{-}X)$. 
\end{lem}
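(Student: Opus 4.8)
The plan is to prove that $j_{n+1}$ agrees with the homomorphism $k_{n+1}\colon\overline{\pi}_{n+1}(\Sigma X)\to\overline{\pi}_{n+1}(\Sigma X,C^{-}X)$ that the inclusion of pairs induces through Proposition~\ref{inducedhom}. First I would record the canonical identification $\overline{\pi}_{n+1}(\Sigma X)\cong\overline{\pi}_{n+1}(\Sigma X,x_0)$: a triple grid map into $(\Sigma X,x_0,x_0)$ sends all of $\partial J_{m_i}^{\Box(n+1)}$ to $x_0$, and since $\overline{J}_{m_i}^{\Box n}\subseteq\partial J_{m_i}^{\Box(n+1)}$ the relative condition on $\overline{J}_{m_i}^{\Box n}$ is automatic; thus the two groups have identical representatives and one-step $F$-homotopies. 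Under this identification $k_{n+1}$ carries the class of a grid map $\gamma\colon(J_{m_i}^{\Box(n+1)},\partial J_{m_i}^{\Box(n+1)})\to(\Sigma X,x_0)$ to the class of the same map $\gamma$ reread as a triple grid map into $(\Sigma X,C^{-}X,x_0)$, which is legitimate because $\gamma(\partial J_{m_i}^{\Box(n+1)})=x_0\subseteq C^{-}X$.

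Since $\Phi_{n+1}$ and $\zeta_n$ are isomorphisms and $j_{n+1}=\Phi_{n+1}^{-1}\circ\zeta_n^{-1}\circ\delta_n$ by~(\ref{jdef}), the identity $j_{n+1}=k_{n+1}$ is equivalent to $\delta_n=\zeta_n\circ\Phi_{n+1}\circ k_{n+1}$, and I would verify this on the representative $\gamma$ above. Using the explicit form of $\Phi_{n+1}$ from Proposition~\ref{dual}, the class $\Phi_{n+1}(k_{n+1}[\gamma])$ is represented by the grid map $\widetilde{\gamma}\colon(J_{m_i},\partial J_{m_i})^{\Box n}\to(\overline{P}(\Sigma X,C^{-}X,x_0),\langle e_{x_0}\rangle)$ with $\widetilde{\gamma}(a)=\langle\gamma(\cdot,a)\rangle$, the subdivision class of the path $j\mapsto\gamma(j,a)$. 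Applying $\zeta_n$, induced by the isomorphism $\zeta\colon\langle\lambda\rangle\mapsto(\lambda(0),\langle\lambda\rangle)$ of Proposition~\ref{iso}, then produces the class of $a\mapsto(\gamma(0,a),\langle\gamma(\cdot,a)\rangle)$ in $\overline{\pi}_n(P_i)$. As $\gamma$ represents an absolute class, the vertex $(0,a)$ lies in $\partial J_{m_i}^{\Box(n+1)}$, forcing $\gamma(0,a)=x_0$, so this representative simplifies to $a\mapsto(x_0,\langle\gamma(\cdot,a)\rangle)$.

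It then remains to identify $\delta_n[\gamma]$ with the same representative. Here I would extract from the construction of the long exact sequence in~\cite[Theorem~5.9]{LWYZ24}, applied to the inclusion $i\colon C^{-}X\to\Sigma X$, that $\delta_n$ sends the absolute class $[\gamma]$ to the based-loop datum it determines along the first box coordinate, placed over the basepoint of $C^{-}X$; that is, $\delta_n[\gamma]$ is represented by $a\mapsto(x_0,\langle\gamma(\cdot,a)\rangle)\in P_i$. This coincides with the representative of $\zeta_n\circ\Phi_{n+1}\circ k_{n+1}([\gamma])$ computed above, giving $\delta_n=\zeta_n\circ\Phi_{n+1}\circ k_{n+1}$ and hence $j_{n+1}=k_{n+1}$, which is the assertion of the lemma.

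The main obstacle is this last step: one must read off the precise representative of the connecting map $\delta_n$ from the fibration-sequence machinery of~\cite{LWYZ24} and check that the loop data it records matches, on the nose, the path $\gamma(\cdot,a)$ produced by the duality map $d_{m_1}$ underlying $\Phi_{n+1}$. This is a matter of aligning conventions---which end of $J_{m_1}$ carries the basepoint, which end is evaluated by the map $e$, and that the subdivision classes are formed consistently---after which the three computations are purely formal. Should the direct comparison prove awkward, an alternative is to feed the map of triples $(\Sigma X,x_0,x_0)\to(\Sigma X,C^{-}X,x_0)$ into the naturality ladder of Proposition~\ref{mor}; commutativity of the relevant square reduces the claim to the case $A=x_0$, where one needs only that the map $j_{n+1}$ of the pair $(\Sigma X,x_0)$ is the canonical identification $\overline{\pi}_{n+1}(\Sigma X,x_0)\cong\overline{\pi}_{n+1}(\Sigma X)$.
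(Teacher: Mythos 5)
Your fallback suggestion in the final sentence is precisely the paper's proof: the paper feeds the square of inclusions relating $(\Sigma X,x_0)$ and $(\Sigma X,C^{-}X)$ into Proposition~\ref{mor}, notes that $\overline{\pi}_{n+1}(x_0)\cong 0\cong\overline{\pi}_n(x_0)$ forces the map $\overline{\pi}_{n+1}(\Sigma X)\rightarrow\overline{\pi}_{n+1}(\Sigma X,x_0)$ in the top row to be an isomorphism (in fact the identity), and then reads off $j_{n+1}=g_{n+1}$ from the middle square of the ladder. Your primary route --- unwinding $j_{n+1}=\Phi_{n+1}^{-1}\circ\zeta_n^{-1}\circ\delta_n$ on a representative $\gamma$ and matching it against the induced map $k_{n+1}$ --- is a genuinely different, more computational argument, and the parts you carry out (the identification $\overline{\pi}_{n+1}(\Sigma X)\cong\overline{\pi}_{n+1}(\Sigma X,x_0)$, the description of $\Phi_{n+1}\circ k_{n+1}$ via the duality map, the simplification $\gamma(0,a)=x_0$) are all sound. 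However, the step you yourself flag as the main obstacle is a real gap: you assert, rather than derive, that the connecting map $\delta_n$ of~\cite[Theorem~5.9]{LWYZ24} sends $[\gamma]$ to the class of $a\mapsto(x_0,\langle\gamma(\cdot,a)\rangle)$ in $\overline{\pi}_n(P_i)$. Since this paper never records an explicit formula for $\delta_n$ (it only imports the exact sequence), that identification would have to be extracted and convention-checked from the external reference, and your argument is incomplete without it. The naturality-ladder route avoids this entirely, which is exactly why the paper adopts it: all the convention-matching is absorbed into the already-proven commutativity of Proposition~\ref{mor}, and the only input needed is the vanishing of the homotopy groups of a point.
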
 

\begin{proof} 
The commutative diagram of based digraph inclusions 
$$\xymatrix{
    x_{0} \ar[d] \ar[r] & \Sigma X \ar[d]  \\
    C^{-}X  \ar[r] & \Sigma X.        } $$ 
implies that there is a map of digraph pairs $(\Sigma X,x_{0})\rightarrow (\Sigma X,C^{-}X)$. Proposition~\ref{mor} then implies that there is a commutative diagram of exact sequences
$$\xymatrix@C=0.5cm{
  \overline{\pi}_{n+1}(x_{0}) \ar[r]\ar[d] &  \overline{\pi}_{n+1}(\Sigma X) \ar[r]\ar[d]^{=} &
    \overline{\pi}_{n+1}(\Sigma X,x_{0})\ar[r]^-{\partial_{n+1}}\ar[d]^{g_{n+1}}&
  \overline{\pi}_n(x_{0}) \ar[r]\ar[d]  & \overline{\pi}_n(\Sigma X)\ar[d]^{=} \\
   \overline{\pi}_{n+1}(C^{-}X) \ar[r] &  \overline{\pi}_{n+1}(\Sigma X) \ar[r]^-{j_{n+1}}&
    \overline{\pi}_{n+1}(\Sigma X,C^{-}X)\ar[r]&
  \overline{\pi}_n(C^{-}X) \ar[r]  & \overline{\pi}_n(\Sigma X)
  }$$
which is exact as sets for $n\geq 0$ and as groups for $n\geq 1$. 
By definition of $\overline{\pi}_{n}(G)$, the one vertex digraph $\{x_{0}\}$ has the property that $\overline{\pi}_{n}(x_{0})\cong 0$ for all $n\geq 0$. Thus the map 
$\overline{\pi}_{n}(\Sigma X)\rightarrow\overline{\pi}_{n+1}(\Sigma X,x_{0})$ 
in the top row is an isomorphism (in fact, it is the identity map). The commutativity of the middle square then implies that $j_{n+1}=g_{n+1}$. Hence $j_{n+1}$ is induced by a map of digraph pairs. 
\end{proof}

\subsection{Relating the digraph Hurewicz and suspension homomorphisms} 
We will relate the homotopy suspension $E^n$ to homological analogues of the suspension in both cubical and GLMY homology. To do so, observe that if $(G,A)$ is a digraph pair then there is a short exact sequence of 
cubical chain groups $$ 0\rightarrow C^c_n(A)\rightarrow C^c_n(G)\rightarrow C^c_n(G)/ C_n(A)\rightarrow 0.$$ This induces a long exact sequence of homology groups 
$$\cdots \rightarrow  H^c_{n+1}(G,A) \stackrel{\xi^c_{n+1}}{\rightarrow} H^c_{n}(A) \stackrel{i_n^c}{\rightarrow} H^c_n(G)\stackrel{q_n^c}{\rightarrow}  H^c_n(G,A)\rightarrow \cdots.$$ A similar short exact sequence of path chain groups induces a long exact sequence of GLMY homology groups
$$\cdots \rightarrow  H_{n+1}(G,A) \stackrel{\xi_{n+1}}{\rightarrow} H_{n}(A) \stackrel{i_n}{\rightarrow} H_n(G)\stackrel{q_n}{\rightarrow}  H_n(G,A)\rightarrow \cdots.$$
 A homology suspension can be defined in each case.
\begin{defn}
Let $X$ be a digraph.  For $n\geq 0$ the \emph{cubical homology suspension} $E^c_n$ is the composite of homomorphisms $$E^c_n: H_n^c (X) \stackrel{(\xi^c_{n+1})^{-1}}{\longrightarrow} H_{n+1}^c(C^+ X,X) \stackrel{i^c_{n+1}}{\longrightarrow} H_{n+1}^c(\Sigma X,C^{-}X ) \stackrel{(q_{n+1}^c)^{-1}}{\longrightarrow}  H_{n+1}^c(\Sigma X),$$ 
and 
the \emph{GLMY homology suspension} $E'_n$ is the composite of homomorphisms $$E'_n: H_n (X) \stackrel{\xi_{n+1}^{-1}}{\longrightarrow} H_{n+1}(C^+ X,X) \stackrel{i_{n+1}}{\longrightarrow} H_{n+1}(\Sigma X,C^{-}X ) \stackrel{q_{n+1}^{-1}}{\longrightarrow}  H_{n+1}(\Sigma X).$$
\end{defn}
As is well known, for topological spaces the homology suspension is an isomorphism. We show this also holds for  the GLMY homology suspension $E'_n$. 
\begin{prop}\label{susiso}Let $X$ be a digraph. The GLMY homology suspension $$E'_n:H_n (X) \rightarrow H_{n+1} (\Sigma X)  $$ is an isomorphism for $n\geq 0$.
\begin{proof}
To prove that $E'_n:H_n (X) \rightarrow H_{n+1} (\Sigma X)  $ is an isomorphism, as $\xi_{n+1}$ and $q_{n+1}$ are isomorphisms, it suffices to show that $i_{n+1}: H_{n+1}(C^{+}X,X)\rightarrow H_{n+1}(\Sigma X,C^-X)$ is an isomorphism.

By~\cite[Propositions 6.10 and 6.14]{GLMY12}, $$ i: \Omega_{n+1}(C^{+}X) /\Omega_{n+1}(X)\rightarrow   \Omega_{n+1}(\Sigma X)/ \Omega_{n+1}(C^{-}X)$$ is defined by $$ i([v_0v_1\cdots v_nv_{n+1}])=\left\{
                     \begin{array}{ll}
                      [v_0v_1\cdots v_na], & \hbox{$v_{n+1}=a$} \\
                       0, & \hbox{$v_{n+1}\in X$}
                     \end{array}
                   \right
.$$ 
for any allowed path $v_0v_1\cdots v_nv_{n+1}$ in $C^{+}X$.

Since $\Omega_{n+1}(\Sigma X) = \Omega_n(X)
\bigotimes span\{a,b\}$ and $\Omega_{n+1}(C^- X) = \Omega_n(X)
\bigotimes span\{b\}$, a non-zero element in $ \Omega_{n+1}(\Sigma X)/ \Omega_{n+1}(C^- X) $ must be a linear combination of terms $[v_0v_1\cdots v_n a]$. Each $v_0v_1\cdots v_n a$ is in $C^{+}X$ but not in $X$. Thus $[v_0v_1\cdots v_n a ]$ is in  $ \Omega_{n+1}(C^{+} X)/ \Omega_{n+1}(X) $. Consequently, $i$ is surjective.

Assume that $i([\sum v_0v_1\cdots v_{n+1}]) =[0]$ for some element $[\sum v_0v_1\cdots v_{n+1}]\in \Omega_{n+1}(C^+ X)/ \Omega_{n+1}(X)$.  Then $[\sum v_0v_1\cdots v_{n+1}]\in \Omega_{n+1}(C^+ X)$ implies that $\sum v_0v_1\cdots v_{n+1} \in A_{n+1}(C^+ X)$, while the assumption that $i([\sum v_0v_1\cdots v_{n+1}]) =[0]$ implies that $[\sum v_0v_1\cdots v_{n+1}]\in \Omega_{n+1}(C^- X)$ and therefore $\sum v_0v_1\cdots v_{n+1} \in A_{n+1}(C^- X)$. Thus $$\sum v_0v_1\cdots v_{n+1} \in A_{n+1}(C^- X) \bigcap A_{n+1}(C^+ X)= A_{n+1}(X).$$ Consequently, $[\sum v_0v_1\cdots v_{n+1}] =0 \in \Omega_{n+1}(C^+ X)/ \Omega_{n+1}(X)$, implying that $i$ is injective. Hence $i_{n+1}$ is an isomorphism. 
\end{proof}
\end{prop}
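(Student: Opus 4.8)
The plan is to exploit the definition of $E'_n$ as the composite $q_{n+1}^{-1}\circ i_{n+1}\circ \xi_{n+1}^{-1}$ and reduce the whole statement to a single chain-level computation. Since a composite of isomorphisms is an isomorphism, it suffices to check that each of the three maps is one. First I would dispatch $\xi_{n+1}$ and $q_{n+1}$: the cones $C^{+}X$ and $C^{-}X$ are contractible by~\cite[Example 3.11]{GLMY15}, so their reduced path homology vanishes in every positive degree. Inserting this vanishing into the long exact sequences of the pairs $(C^{+}X,X)$ and $(\Sigma X,C^{-}X)$ makes the flanking terms disappear and forces the connecting map $\xi_{n+1}\colon H_{n+1}(C^{+}X,X)\to H_n(X)$ and the projection $q_{n+1}\colon H_{n+1}(\Sigma X)\to H_{n+1}(\Sigma X,C^{-}X)$ to be isomorphisms. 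This isolates the real content: showing that $i_{n+1}\colon H_{n+1}(C^{+}X,X)\to H_{n+1}(\Sigma X,C^{-}X)$ is an isomorphism.

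Next I would prove the $i_{n+1}$ statement at the chain level rather than on homology. The map $i_{n+1}$ is induced by the inclusion of pairs $(C^{+}X,X)\hookrightarrow(\Sigma X,C^{-}X)$, so it arises from a genuine chain map $i\colon \Omega_{n+1}(C^{+}X)/\Omega_{n+1}(X)\to\Omega_{n+1}(\Sigma X)/\Omega_{n+1}(C^{-}X)$ of relative $\partial$-invariant complexes; a chain map that is a degreewise isomorphism automatically induces an isomorphism on relative homology. The structural input I would import from~\cite[Propositions 6.10 and 6.14]{GLMY12} is the identification $\Omega_{n+1}(\Sigma X)\cong\Omega_n(X)\otimes\mathrm{span}\{a,b\}$ together with $\Omega_{n+1}(C^{-}X)\cong\Omega_n(X)\otimes\mathrm{span}\{b\}$, and the explicit formula for $i$ on generators: $i([v_0\cdots v_n a])=[v_0\cdots v_n a]$ while $i([v_0\cdots v_n v_{n+1}])=0$ whenever $v_{n+1}\in X$.

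With these descriptions the two halves of ``isomorphism'' are short. For surjectivity, note that passing to the quotient by $\Omega_{n+1}(C^{-}X)$ annihilates every generator ending in $b$, so any nonzero class in $\Omega_{n+1}(\Sigma X)/\Omega_{n+1}(C^{-}X)$ is a combination of classes $[v_0\cdots v_n a]$; each such path lies in $C^{+}X$ but not in $X$, and hence is hit by the corresponding class in $\Omega_{n+1}(C^{+}X)/\Omega_{n+1}(X)$. For injectivity, if $i([\sum v_0\cdots v_{n+1}])=0$, then its chosen representative $\sum v_0\cdots v_{n+1}$ lies in $A_{n+1}(C^{+}X)$ because it comes from $\Omega_{n+1}(C^{+}X)$, and it lies in $A_{n+1}(C^{-}X)$ because its image vanishes modulo $\Omega_{n+1}(C^{-}X)$; since $A_{n+1}(C^{+}X)\cap A_{n+1}(C^{-}X)=A_{n+1}(X)$, the class was already zero in the source quotient.

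The main obstacle is not the bookkeeping above but the structural decomposition of $\Omega_{\ast}$ for cones and suspensions: one must be sure that $\Omega_{n+1}(\Sigma X)\cong\Omega_n(X)\otimes\mathrm{span}\{a,b\}$ holds compatibly with the boundary operator and the $\partial$-invariance condition~(\ref{1}), and that $i$ genuinely respects this condition so that it is a well-defined chain map. I would lean on the cited GLMY computations for these identifications; once they are in place, the surjectivity and injectivity arguments close the proof for all $n\geq 0$.
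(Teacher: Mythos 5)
Your proposal is correct and follows essentially the same route as the paper: reduce to showing $i_{n+1}$ is an isomorphism, then verify this at the chain level using the decompositions $\Omega_{n+1}(\Sigma X)\cong\Omega_n(X)\otimes\mathrm{span}\{a,b\}$ and $\Omega_{n+1}(C^{-}X)\cong\Omega_n(X)\otimes\mathrm{span}\{b\}$ from~\cite{GLMY12}, with the same surjectivity and injectivity arguments (the intersection $A_{n+1}(C^{+}X)\cap A_{n+1}(C^{-}X)=A_{n+1}(X)$ giving injectivity). The only difference is that you spell out why $\xi_{n+1}$ and $q_{n+1}$ are isomorphisms via contractibility of the cones, which the paper simply asserts.
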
 
Next, we show that the homotopy and cubical homology suspensions are compatible with the Hurewicz homomorphism.
\begin{prop}\label{ccompa}
For each $n\geq 0$ there is a commutative diagram 
$$\xymatrix{
     \overline{\pi}_{n}(X)\ar[r]^-{E_n}\ar[d]^{H_n} & \overline{\pi}_{n+1}(\Sigma X)\ar[d]^{H_{n+1}} \\
     H_{n}^c(X)\ar[r]^-{E_n^c} & H^c_{n+1}(\Sigma X).}$$
\begin{proof} 
Consider the following diagram
$$\xymatrix{
\overline{\pi}_n (X) \ar[r]^-{\partial_{n}^{-1}}\ar[d]_{H_n}& \overline{\pi}_{n+1}(C^+ X,X) \ar[r]^{i_{n+1}} \ar[d]^{H_{n+1}} & \overline{\pi}_{n+1}(\Sigma X,C^{-}X ) \ar[d]^{H_{n+1}} \ar[r]^-{j_{n+1}^{-1}} & \overline{\pi}_{n+1}(\Sigma X) \ar[d]^{H_{n+1}}  \\
H_n^c (X) \ar[r]^-{(\xi^c_{n+1})^{-1}}& H_{n+1}^c(C^+ X,X) \ar[r]^-{i^c_{n+1}}& H_{n+1}^c(\Sigma X,C^{-}X ) \ar[r]^-{(q_{n+1}^c)^{-1}} & H_{n+1}^c(\Sigma X) . } $$ 
The top row is the definition of the homotopy suspension $E_n$ and the bottom row is the definition of the cubical homology suspension $E^{c}_n$.
By Proposition~\ref{commut}, $\xi^c_{n+1}\circ H_{n+1} = H_{n}\circ \partial_{n+1}$, so composing on the left with $(\xi^c_{n+1})^{-1}$ and on the right with $\partial_{n+1}^{-1}$ implies that $H_{n+1}\circ \partial_{n+1}^{-1} = (\xi_{n+1} )^{-1}\circ H_{n}$
and therefore the left square commutes. 
By Proposition~\ref{cubnat}, $H_{n+1}\circ i_{n+1} = i_{n+1}^c\circ H_{n+1}$, so the middle square commutes. By Lemma \ref{induce1}, $j_{n+1}$ is induced by the map of pairs $l:(\Sigma X,x_{0})\rightarrow (\Sigma X,C^-X)$. We claim that $q^c_{n+1}$ is also induced by $l:(\Sigma X,x_{0})\rightarrow (\Sigma X,C^-X)$. If so, then Proposition \ref{cubnat} implies that $q^c_{n+1}\circ H_{n+1} = H_{n+1}\circ j_{n+1}$, so composing on the right with $(q^c_{n+1})^{-1}$ and on the right with $(j_{n+1})^{-1}$ gives $H_{n+1}\circ E_{n}=E_{n}^{c}\circ H_{n}$. Thus the right square commutes, implying the entire diagram does. From the commutativity of the entire diagram we obtain $H_{n+1}\circ E_{n}=E^{c}_{n}\circ H_{n}$, proving the proposition. 

It remains to show that $q^c_{n+1}$ is also induced by the map $l:(\Sigma X,x_{0})\rightarrow (\Sigma X,C^-X)$.  Since $l$ induces a homomorphism between long exact sequences of cubical homology groups, there is a commutative diagram  $$\xymatrix@C=0.5cm{
  H_{n+1}^c(x_{0}) \ar[r]\ar[d] &  H_{n+1}^c(\Sigma X) \ar[r]^{=}\ar[d]^{=} &
    H_{n+1}^c(\Sigma X,x_{0})\ar[r]^{}\ar[d]^{l_{n+1}}&
  H_n^c(x_{0}) \ar[r]\ar[d]  & H_n^c(\Sigma X)\ar[d]^{=} \\
  H_{n+1}^c(C^{-}X) \ar[r] &  H_{n+1}^c(\Sigma X) \ar[r]^-{q^c_{n+1}}&
    H_{n+1}^c(\Sigma X,C^{-}X)\ar[r]&
 H_n^c(C^{-}X) \ar[r]  & H_n^c(\Sigma X).
  }$$
Thus $q^c_{n+1} =l_{n+1}$, that is, $q^c_{n+1}$ is induced by $l:(\Sigma X,x_{0})\rightarrow (\Sigma X,C^-X)$. 
\end{proof}
\end{prop}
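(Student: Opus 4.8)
The plan is to expand both suspension homomorphisms into their defining composites and verify commutativity one square at a time in the rectangle
$$\xymatrix{
\overline{\pi}_n (X) \ar[r]^-{\partial_{n+1}^{-1}}\ar[d]_{H_n}& \overline{\pi}_{n+1}(C^+ X,X) \ar[r]^{i_{n+1}} \ar[d]^{H_{n+1}} & \overline{\pi}_{n+1}(\Sigma X,C^{-}X ) \ar[d]^{H_{n+1}} \ar[r]^-{j_{n+1}^{-1}} & \overline{\pi}_{n+1}(\Sigma X) \ar[d]^{H_{n+1}}  \\
H_n^c (X) \ar[r]^-{(\xi^c_{n+1})^{-1}}& H_{n+1}^c(C^+ X,X) \ar[r]^-{i^c_{n+1}}& H_{n+1}^c(\Sigma X,C^{-}X ) \ar[r]^-{(q_{n+1}^c)^{-1}} & H_{n+1}^c(\Sigma X), }$$
whose top row is exactly $E_n$ and whose bottom row is exactly $E^c_n$. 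Since $\partial_{n+1}$, $\xi^c_{n+1}$, $j_{n+1}$ and $q^c_{n+1}$ are isomorphisms, it suffices to show the three inner squares commute; the outer rectangle will then give $H_{n+1}\circ E_n = E^c_n\circ H_n$.

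For the left square I would invoke Proposition~\ref{commut}, applied to the pair $(C^+X,X)$, which gives $\xi^c_{n+1}\circ H_{n+1}=H_n\circ\partial_{n+1}$; precomposing with $\partial_{n+1}^{-1}$ and postcomposing with $(\xi^c_{n+1})^{-1}$ yields $H_{n+1}\circ\partial_{n+1}^{-1}=(\xi^c_{n+1})^{-1}\circ H_n$. The middle square is precisely the naturality of the Hurewicz homomorphism with respect to the map of pairs $f\colon(C^+X,X)\to(\Sigma X,C^-X)$, which is the content of Proposition~\ref{cubnat}. Both of these are immediate citations.

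The right square is the crux, and I expect it to be the main obstacle. The difficulty is that $j_{n+1}$ is defined as a composite through $\delta_n$, $\theta_n$ and $\Phi_{n+1}^{-1}$, so it is not manifestly induced by a digraph map and Proposition~\ref{cubnat} does not apply to it directly. To circumvent this I would first recall Lemma~\ref{induce1}, which identifies $j_{n+1}$ with the map induced by the inclusion of pairs $l\colon(\Sigma X,x_0)\to(\Sigma X,C^-X)$. The genuinely new step is to show that $q^c_{n+1}$ is induced by the \emph{same} map $l$. For this I would feed the commutative square of pairs $(x_0\subset\Sigma X)\to(C^-X\subset\Sigma X)$ into the functoriality of the cubical homology long exact sequence, obtaining a morphism of long exact sequences; since $H^c_\ast(x_0)=0$ in positive degrees, the comparison map $H^c_{n+1}(\Sigma X)\to H^c_{n+1}(\Sigma X,x_0)$ is the identity, and commutativity of the square adjacent to $q^c_{n+1}$ forces $q^c_{n+1}=l_{n+1}$.

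Once $q^c_{n+1}$ is recognized as $l^c_{n+1}$, Proposition~\ref{cubnat} applied to $l$ gives $q^c_{n+1}\circ H_{n+1}=H_{n+1}\circ j_{n+1}$; precomposing with $j_{n+1}^{-1}$ and postcomposing with $(q^c_{n+1})^{-1}$ then yields $H_{n+1}\circ j_{n+1}^{-1}=(q^c_{n+1})^{-1}\circ H_{n+1}$, so the right square commutes. Combining the three commuting squares establishes $H_{n+1}\circ E_n=E^c_n\circ H_n$, proving the proposition. All the work beyond the two cited naturality results is concentrated in the identification $q^c_{n+1}=l^c_{n+1}$, which is where the care must be taken.
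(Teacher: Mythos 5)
Your proposal is correct and follows essentially the same route as the paper's proof: the same three-square decomposition, the same citations of Propositions~\ref{commut} and~\ref{cubnat} for the left and middle squares, and the same key step of identifying $q^c_{n+1}$ with the map induced by $l\colon(\Sigma X,x_0)\to(\Sigma X,C^-X)$ via the morphism of cubical homology long exact sequences, so that Lemma~\ref{induce1} and Proposition~\ref{cubnat} handle the right square.
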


Now we relate the suspension maps for cubical and GLMY homology. 
Recall from~(\ref{Lndef}) that for a digraph $G$ there is a chain map $\iota_{n}:\Omega^c_n(G)\rightarrow\Omega_n(G)$ that induces a homomorphism $L_n: H_n^c(G)\rightarrow H_n(G)$. 

\begin{prop}\label{hhcom}
Let $X$ be a digraph. For each $n\geq 0$ there is a commutative diagram 
$$\xymatrix{
H_{n}^c(X) \ar[r]^-{E^c_{n}} \ar[d]_{L_{n}} & H_{n+1}^c(\Sigma X) \ar[d]^{L_{n+1}} \\
H_{n}(X) \ar[r]^-{E'_{n}}  &     H_{n+1}(\Sigma X).  } $$
\begin{proof}
First observe that the inclusion $i: X \rightarrow C^+ X$ induces a short exact sequence of cubical chain groups 
$$\xymatrix{
0 \ar[r]&   \Omega_{n}^c(X) \ar[r]^{i^c_n} & \Omega_{n}^c(C^+ X) \ar[r]^{q^c_n}  & \Omega_{n}^c(C^{+}X,X) \ar[r]&0      } $$ 
and a short exact sequence of GLMY chain groups
$$\xymatrix{
0 \ar[r]&   \Omega_{n}(X) \ar[r]^{i_n} & \Omega_{n}(C^+ X) \ar[r]^{q_n}  & \Omega_{n}(C^{+}X,X) \ar[r]&0      .} $$  
By~\cite[Proposition 10]{GMJ21}, the inclusion $i: X\rightarrow C^+X$ induces a commutative diagram  
$$\xymatrix{
0 \ar[r]&   \Omega_{n}^c(X) \ar[r]^{i^c_n}\ar[d]_{\iota_n} & \Omega_{n}^c(C^+ X) \ar[r]^{q^c_n} \ar[d]_{\iota_n} & \Omega_{n}^c(C^{+}X,X) \ar[r] \ar[d]_{\iota_n} &0     \\
0 \ar[r] & \Omega_{n}(X) \ar[r]^{i_n} &\Omega_{n}(C^+ X) \ar[r]^{q_n} & \Omega_{n}(C^{+}X,X)\ar[r] & 0.} $$ 
Hence, taking homology, there is a commutative diagram of long exact sequences 
 $$\xymatrix{
\cdots \ar[r]&H_{n+1}^c(C^+ X,X) \ar[r]^-{\xi^c_{n+1}} \ar@{-->}[d]_{L_{n+1}} & H_{n}^c(X) \ar[d]^{L_n} \ar[r]^{i^c_n} & H_{n}^c(C^+ X) \ar[d]^{L_n} \ar[r]^{q_n^c} &  H_{n}^c(C^+ X,X) \ar@{-->}[d]^{L_n} \ar[r] & \cdots \\
\cdots \ar[r]&H_{n+1}(C^+ X,X) \ar[r]^-{\xi_{n+1}}  &     H_{n}(X)  \ar[r]^{i_n} & H_{n}(C^+ X) \ar[r]^{q_n}  &   H_{n}(C^+ X,X) \ar[r]& \cdots         } $$ 
that defines the maps $L_n$ and $L_{n+1}$ of relative homology groups. In particular, from the left square we obtain $L_{n+1}\circ \xi^c_{n+1} = \xi_{n+1} \circ L_{n+1}.$ As $\xi^{c}_{n+1}$ and $\xi_{n+1}$ are isomorphisms, composing on the right with $(\xi^c_{n+1})^{-1}$ and on the left with $(\xi_{n+1} )^{-1}$ implies that (i) $(\xi_{n+1} )^{-1}\circ L_{n+1} = L_{n+1} \circ (\xi^c_{n+1})^{-1} .$ Similarly, from the right square above we obtain (ii) $ q_{n+1}^{-1}\circ L_{n+1} = L_{n+1} \circ (q^c_{n+1})^{-1} . $ 

Next, the sequences of inclusions $X\rightarrow C^+X\rightarrow\Sigma X$ and $X\rightarrow C^-X\rightarrow\Sigma X$ induce the commutative cube of chain maps on the left side of the diagram 
 $$
 \begin{small}
 \xymatrix@R=0.2em@C=0.05em{
&  \Omega_{n+1}^c (C^{-}X) \ar[rr]\ar[dd]_(0.7){\iota_{n+1}}& & \Omega_{n+1}^c(\Sigma X) \ar[rr]\ar[dd]_(0.7){\iota_{n+1}} & & \textcolor{red}{\Omega_{n+1}^c(\Sigma X,C^{-}X )} \ar@{-->}[dd]  \\
\Omega_{n+1}^c (X) \ar[ur] \ar[rr]\ar[dd]_{\iota_{n+1}}& & \Omega_{n+1}^c (C^{+} X) \ar[ur] \ar[rr] \ar[dd]_(0.7){\iota_{n+1}}& & \textcolor{red}{\Omega_{n+1}^c (C^{+} X,X)} \ar@{-->}[ur] \ar@{-->}[dd]&  \\
&  \Omega_{n+1} (C^{-}X) \ar[rr] & & \Omega_{n+1}(\Sigma X)  \ar[rr] & &  \textcolor{red}{\Omega_{n+1}(\Sigma X,C^{-}X )} \\
\Omega_{n+1} (X)\ar[ur] \ar[rr] & & \Omega_{n+1} (C^{+} X) \ar[ur] \ar[rr]& & \textcolor{red}{\Omega_{n+1} (C^{+} X,X). }\ar@{-->}[ur] &  }
\end{small}$$ 
 The cube on the right side of the diagram is obtained by taking short exact sequences of chain complexes horizontally, and noting that short exactness implies that the face in red commutes. Taking homology with respect to the face in red gives a commutative diagram 
$$\xymatrix{
 H_{n+1}^c(C^+ X,X) \ar[r]^{i^c_{n+1}} \ar[d]^{L_{n+1}} & H_{n+1}^c(\Sigma X,C^{-}X ) \ar[d]^{L_{n+1}}  \\
 H_{n+1}(C^+ X,X) \ar[r]^{i_{n+1}}  &     H_{n+1}(\Sigma X, C^-X). } $$ 
Combining this square with (i) and (ii) gives a commutative diagram
$$\xymatrix{
H_n^c (X) \ar[r]^-{(\xi^c_{n+1})^{-1}}\ar[d]_{L_n}& H_{n+1}^c(C^+ X,X) \ar[r]^{i^c_{n+1}} \ar[d]^{L_{n+1}} & H_{n+1}^c(\Sigma X,C^{-}X ) \ar[d]^{L_{n+1}} \ar[r]^-{(q^c_{n+1})^{-1}} & H_{n+1}^c(\Sigma X) \ar[d]^{L_{n+1}}  \\
H_n (X) \ar[r]^-{\xi_{n+1}^{-1}}& H_{n+1}(C^+ X,X) \ar[r]^{i_{n+1}}  &     H_{n+1}(\Sigma X, C^-X) \ar[r]^-{q_{n+1}^{-1}} &  H_{n+1}(X). } $$ 
Observe that the top row of this diagram is the definition of $E^c_n$ while the bottom row is the definition of $E'_n$. Thus the outer rectangle gives the commutative diagram asserted by the proposition.
\end{proof}
\end{prop}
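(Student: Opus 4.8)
The plan is to factor both horizontal composites through their three defining stages and to verify that the three resulting squares commute one at a time. Reading the top edge as $E^c_n=(q^c_{n+1})^{-1}\circ i^c_{n+1}\circ(\xi^c_{n+1})^{-1}$ and the bottom edge as $E'_n=q_{n+1}^{-1}\circ i_{n+1}\circ\xi_{n+1}^{-1}$, the proposition reduces to the statement that the comparison maps $L_n,L_{n+1}$ of (\ref{Lndef}) are natural with respect to each of the three structure maps occurring here: the connecting homomorphism $\xi$ of the pair $(C^+X,X)$, the homomorphism $i$ induced by the inclusion of pairs $(C^+X,X)\to(\Sigma X,C^-X)$, and the projection $q$ of the pair $(\Sigma X,C^-X)$. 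Because $\xi_{n+1}$ and $q_{n+1}$ are the isomorphisms already inverted in the definitions of the suspensions, each naturality identity may then be transported across the inverses.

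First I would dispose of the two outer squares. The essential input is \cite[Proposition 10]{GMJ21}: for an inclusion such as $X\to C^+X$ it yields a morphism of short exact sequences of chain complexes $0\to\Omega^c_\ast(X)\to\Omega^c_\ast(C^+X)\to\Omega^c_\ast(C^+X,X)\to 0$ into its GLMY analogue, with vertical arrows the chain maps $\iota_\ast$. Passing to homology produces a morphism of long exact sequences; the square containing $\xi$ gives $L_n\circ\xi^c_{n+1}=\xi_{n+1}\circ L_{n+1}$, and inverting the isomorphisms yields $(\xi_{n+1})^{-1}\circ L_n=L_{n+1}\circ(\xi^c_{n+1})^{-1}$, which is exactly the left square. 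Running the same argument for the pair $(\Sigma X,C^-X)$ gives the commuting square for $q$, namely $q_{n+1}^{-1}\circ L_{n+1}=L_{n+1}\circ(q^c_{n+1})^{-1}$, which is the right square.

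The middle square, naturality of $L_{n+1}$ with respect to $i_{n+1}$, is the step I expect to be the main obstacle, as it is not covered by a single application of \cite[Proposition 10]{GMJ21}. Here I would assemble the commutative cube of chain complexes determined by the two chains of inclusions $X\to C^+X\to\Sigma X$ and $X\to C^-X\to\Sigma X$, with the maps $\iota_{n+1}$ joining the cubical front face to the GLMY back face. Forming the relative complexes along the horizontal short exact sequences creates a fourth, relative face; exactness forces this face to commute, which says precisely that $\iota_{n+1}$ descends compatibly to $\Omega^c_{n+1}(C^+X,X)\to\Omega^c_{n+1}(\Sigma X,C^-X)$ and its GLMY counterpart. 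Taking homology with respect to this relative face then delivers $L_{n+1}\circ i^c_{n+1}=i_{n+1}\circ L_{n+1}$. Verifying that $\iota$ is genuinely compatible with passage to the quotient complexes is the only delicate point; once the relative face is known to commute, everything else is bookkeeping.

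Finally I would paste the three commuting squares side by side. The top row then reads as the definition of $E^c_n$ and the bottom row as the definition of $E'_n$, so the commutativity of the outer rectangle gives $L_{n+1}\circ E^c_n=E'_n\circ L_n$, which is the asserted diagram.
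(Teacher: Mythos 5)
Your proposal is correct and follows essentially the same route as the paper: both outer squares come from applying \cite[Proposition 10]{GMJ21} to get a morphism of short exact sequences of chain complexes and hence of long exact homology sequences (run once for the pair $(C^{+}X,X)$ and once for $(\Sigma X,C^{-}X)$), and the middle square comes from the commutative cube built from the inclusions $X\rightarrow C^{\pm}X\rightarrow\Sigma X$, whose induced relative face commutes by exactness. The only difference is cosmetic: you state the naturality identity for $\xi$ with the correct indices ($L_n\circ\xi^c_{n+1}=\xi_{n+1}\circ L_{n+1}$) and you make explicit that the $q$-square requires rerunning the argument for the pair $(\Sigma X,C^{-}X)$, both of which the paper leaves slightly implicit.
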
 

For a digraph $X$, define the \emph{GLMY Hurewicz homomorphism} by the composite 
$$\widetilde{H}_n:\overline{\pi}_n(X)\xrightarrow{H_n} H^c_n(X)\xrightarrow{L_n} H_n(X).$$
Combining Propositions \ref{ccompa} and \ref{hhcom} immediately implies Theorem~\ref{intropcompa}, restated as the following.
\begin{thm}\label{pcompa}
For any digraph $X$, the homotopy suspension and GLMY homology suspension are compatible, i.e. there is a commutative square $$\xymatrix{
     \overline{\pi}_{n}(X)\ar[r]^{E_n}\ar[d]_{\widetilde{H}_n} & \overline{\pi}_{n+1}(\Sigma X)\ar[d]^{\widetilde{H}_{n+1}} \\
     H_{n}(X)\ar[r]^{E'_n} & H_{n+1}(\Sigma X). }$$ 
\end{thm}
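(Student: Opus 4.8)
The plan is to obtain the asserted commutative square by vertically pasting the two commutative squares already established in Propositions~\ref{ccompa} and~\ref{hhcom}. Recall that the GLMY Hurewicz homomorphism is by definition the composite $\widetilde{H}_n = L_n\circ H_n$, so the vertical maps in the desired square factor through the cubical homology groups $H^c_\ast$.

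First I would record the two inputs. Proposition~\ref{ccompa} supplies the commutative square relating the homotopy suspension $E_n$ to the cubical homology suspension $E^c_n$ via the cubical Hurewicz homomorphisms $H_n$ and $H_{n+1}$, while Proposition~\ref{hhcom} supplies the commutative square relating $E^c_n$ to the GLMY homology suspension $E'_n$ via the comparison maps $L_n$ and $L_{n+1}$. The crucial point enabling the paste is that these two squares share a common edge: the bottom row of the first and the top row of the second are both the cubical homology suspension $E^c_n\colon H^c_n(X)\rightarrow H^c_{n+1}(\Sigma X)$, and the shared objects $H^c_n(X)$ and $H^c_{n+1}(\Sigma X)$ match by construction.

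Then I would stack the squares, placing the $\overline{\pi}_\ast$ row on top, the $H^c_\ast$ row in the middle, and the $H_\ast$ row on the bottom, forming the diagram
\[
\xymatrix{
\overline{\pi}_n(X)\ar[r]^-{E_n}\ar[d]^{H_n} & \overline{\pi}_{n+1}(\Sigma X)\ar[d]^{H_{n+1}} \\
H^c_n(X)\ar[r]^-{E^c_n}\ar[d]^{L_n} & H^c_{n+1}(\Sigma X)\ar[d]^{L_{n+1}} \\
H_n(X)\ar[r]^-{E'_n} & H_{n+1}(\Sigma X).
}
\]
Since both the upper square (Proposition~\ref{ccompa}) and the lower square (Proposition~\ref{hhcom}) commute, the outer rectangle commutes. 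Its left vertical composite is $L_n\circ H_n=\widetilde{H}_n$ and its right vertical composite is $L_{n+1}\circ H_{n+1}=\widetilde{H}_{n+1}$, so the outer rectangle is exactly
\[
\xymatrix{
\overline{\pi}_n(X)\ar[r]^-{E_n}\ar[d]_{\widetilde{H}_n} & \overline{\pi}_{n+1}(\Sigma X)\ar[d]^{\widetilde{H}_{n+1}} \\
H_n(X)\ar[r]^-{E'_n} & H_{n+1}(\Sigma X),
}
\]
and its commutativity yields $\widetilde{H}_{n+1}\circ E_n = E'_n\circ\widetilde{H}_n$, as required.

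There is no genuine obstacle remaining at this stage, since all of the analytic content has been absorbed into the two propositions being cited; the only thing one must verify is that the shared middle row $E^c_n$ and the middle objects $H^c_\ast$ genuinely coincide across the two squares, which holds by the very definitions of $E^c_n$, $H_n$ and $L_n$. The real work lies upstream, in establishing Proposition~\ref{ccompa} (naturality of the Hurewicz map with respect to the connecting homomorphisms, via Propositions~\ref{commut} and~\ref{cubnat} together with Lemma~\ref{induce1}) and Proposition~\ref{hhcom} (compatibility of the comparison map $L_\ast$ with the suspension long exact sequences), both of which are already in hand.
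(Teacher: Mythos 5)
Your proposal is correct and is exactly the paper's argument: the paper deduces the theorem by the remark ``Combining Propositions~\ref{ccompa} and \ref{hhcom} immediately implies Theorem~\ref{intropcompa},'' which is precisely the vertical pasting of the two squares through $H^c_\ast$ using $\widetilde{H}_n=L_n\circ H_n$. Nothing further is needed.
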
 
\vspace{-0.7cm}~$\qqed$\bigskip 

In the $n=1$ case, a Hurewicz homomorphism $h':\pi_{1}(X)\rightarrow H_{1}(X)$ was defined in~\cite[Theorem~4.23]{GLMY15}, while by~\cite[Corollary 4.24]{LWYZ24} there is an isomorphism $\overline{\pi}_1(X)\cong\pi_1(X)$. We compare $h'':\overline{\pi}(X)\xrightarrow{\cong}\pi_{1}(X)\xrightarrow{h'} H_{1}(X)$ and $\widetilde{H}_1: \overline{\pi}_1(X)\rightarrow H_1(X)$ to show that they have the same image.

\begin{prop}\label{GLMYHurewicz}
For any digraph $X$, the GLMY Hurewicz homomorphism $\widetilde{H}_1: \overline{\pi}_1(X) \rightarrow H_1(X)$ has image equal to that of $h'':\overline{\pi}_1(X) \rightarrow H_1(X)$. 
\begin{proof} 
Since there is an isomorphism $\overline{\pi}_{1}(X)\cong\pi_{1}(X)$, it suffices to show that $\widetilde{H}_{1}$ and $h'$ have the same image.

Recall that we always assume the length of the $i^{th}-$coordinate $m_i$ of a grid digraph map $f:(J_{m_i}^{\Box (n+1)},$ $\partial J_{m_i}^{\Box (n+1)},\bar{J}_{m_i}^{\Box n})\rightarrow (G,A,x_0) $ is even and $m_i\geq 2$. By the definition of $h'$ in~\cite{GLMY15}, for any loop $\gamma: (J_n,\partial J_n)\rightarrow (X,x_0)$ then 
$$h'([\gamma]) =\left\{
                     \begin{array}{ll}
                      [\sum\limits_{i\rightarrow i+1} \gamma(i) \gamma(i+1) - \sum\limits_{i+1\rightarrow i} \gamma(i+1)\gamma(i)], & \hbox{$n\leq 3$} \\
                       0, & \hbox{$n= 2$.}
                     \end{array}
                   \right
.$$ 

On the other hand, by definition $\widetilde{H}_1 = H_1 \circ L_{1}$ where $L_1$ is induced by~$\iota_1$. By definition of $\iota_1$ in Subsection~\ref{cubical}, we have $\iota_1(f)=f(w_1)$ for any 1-dimensional singular chain $f: J_1\rightarrow G$. Here, for the generator $w_1$, since $P(0,1) = \{(01)\}$ we have $w_1 = 01$. Hence $\iota_1(f)=f(w_1) =f(01)= f(0)f(1)$. Now using $\widetilde{H}_1 = L_1\circ H_1$, the definitions of $L_{1}$ and $H_{1}$ and the fact that $L_{1}$ is a homomorphsm, for any loop $\gamma: (J_n,\partial J_n)\rightarrow (X,x_0)$ with $n\geq 2$ we obtain: 
\begin{align*}
\widetilde{H}_1([\gamma])& = L_1([\sum\limits_{0\leq i_1\leq m_1-1}(-1)^{T(\sigma_{i_1})} \gamma_{i_1}])\\
& = L_1([\sum\limits_{0\leq i_1\leq m_1-1}(-1)^{t(\sigma_{i_1}(0),\sigma_{i_1}(1))} \gamma_{i_1}]) \\
& = [\sum\limits_{0\leq i_1\leq m_1-1}(-1)^{t(\sigma_{i_1}(0),\sigma_{i_1}(1))} L_1(\gamma_{i_1})] \\
& = [\sum\limits_{0\leq i_1\leq m_1-1}(-1)^{t(\sigma_{i_1}(0),\sigma_{i_1}(1))} \gamma_{i_1}(J_1)] \\
& =  [\sum\limits_{0\leq i_1\leq m_1-1}(-1)^{t(\sigma_{i_1}(0),\sigma_{i_1}(1))} \gamma_{i_1}(\sigma_{i_1}(0)\sigma_{i_1}(1))] \\
& = [\sum\limits_{0\leq i_1\leq m_1-1\atop i_1\rightarrow i_1+1} \gamma(i_1)\gamma(i_1+1) - \sum\limits_{0\leq i_1\leq m_1-1\atop i_1+1\rightarrow i_1}\gamma(i_1+1)\gamma(i_1)].
\end{align*}
Thus if $n\geq 3$, we immediately obtain $\widetilde{H}_1([\gamma]) = h'([\gamma])$. If $n =2$, $\widetilde{H}_1([\gamma]) =\gamma(0)\gamma(1) -\gamma(2)\gamma(1) =0$. Hence, $\widetilde{H}_1$ and $h'$ have the same image. 
\end{proof}
\end{prop}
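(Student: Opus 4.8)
The plan is to use the isomorphism $\overline{\pi}_1(X)\cong\pi_1(X)$ to reduce the claim to a direct comparison. Since $h''$ is by definition the composite $\overline{\pi}_1(X)\xrightarrow{\cong}\pi_1(X)\xrightarrow{h'}H_1(X)$ and the first map is surjective, the image of $h''$ equals the image of $h'$, so it suffices to compare $\widetilde{H}_1$ with the GLMY Hurewicz map $h'$ of~\cite{GLMY15}, transported along the isomorphism. In fact I would aim for the stronger statement that the two maps agree on every class represented by a loop $\gamma\colon(J_n,\partial J_n)\to(X,x_0)$, which in particular forces their images to coincide. The strategy is therefore to unwind $\widetilde{H}_1=L_1\circ H_1$ on such a representative and match the resulting $1$-chain against the explicit formula for $h'([\gamma])$.

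First I would make the chain map $\iota_1$ concrete. For a $1$-dimensional singular cubical chain $f\colon J_1\to G$ we have $\iota_1(f)=f_1(\omega_1)$, and since $P(0,1)=\{(01)\}$ the generator from Subsection~\ref{cubical} is $\omega_1=01$; hence $\iota_1(f)=f(0)f(1)$. This identifies $L_1$ as the map sending the class of a cube $f$ to the class of the allowed $1$-path $f(0)f(1)$. Next I would expand $H_1([\gamma])$ using the definition of the Hurewicz homomorphism, obtaining $H_1([\gamma])=\big[\sum_{0\le i_1\le m_1-1}(-1)^{T(\sigma_{i_1})}\gamma_{i_1}\big]$, where $\gamma_{i_1}$ is the restriction of $\gamma$ to the edge from $i_1$ to $i_1+1$. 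In dimension one $T(\sigma_{i_1})=t(\sigma_{i_1}(0),\sigma_{i_1}(1))$ is the inversion number of the single axis, so the sign is $+1$ when the arrow points $i_1\to i_1+1$ and $-1$ when it points $i_1+1\to i_1$.

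Applying $L_1$ term by term (using that $L_1$ is a homomorphism) together with the identification of $\iota_1$ above then gives
$$\widetilde{H}_1([\gamma])=\Big[\sum_{\substack{0\le i_1\le m_1-1\\ i_1\to i_1+1}}\gamma(i_1)\gamma(i_1+1)-\sum_{\substack{0\le i_1\le m_1-1\\ i_1+1\to i_1}}\gamma(i_1+1)\gamma(i_1)\Big],$$
which is precisely the expression defining $h'([\gamma])$ for paths of length $n\ge 3$. For the short case $n=2$ I would check directly that both sides vanish, so the two maps agree there as well. The hard part will be the sign bookkeeping: one must confirm edge by edge that the inversion-number sign $(-1)^{t(\sigma_{i_1}(0),\sigma_{i_1}(1))}$ coming from the cubical decomposition matches the arrow-orientation sign in the GLMY formula, and that the minimal-length case $n=2$ genuinely collapses to zero. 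Once this matching is pinned down, the conclusion that $\widetilde{H}_1$ and $h''$ have the same image is immediate.
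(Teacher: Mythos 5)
Your proposal is correct and follows essentially the same route as the paper's proof: reduce via the isomorphism $\overline{\pi}_1(X)\cong\pi_1(X)$ to comparing $\widetilde{H}_1=L_1\circ H_1$ with $h'$, identify $\iota_1(f)=f(0)f(1)$ from $\omega_1=01$, expand $H_1([\gamma])$ edge by edge with the inversion-number signs, and check the degenerate case $n=2$ separately. No substantive differences.
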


Since $H_1(X)$ is abelian, the Hurewicz homomorphism  
$\overline{\pi}_1(X)\xrightarrow{\widetilde{H}_1} H_1(X)$ 
factors through the abelianization $\overline{\pi}_1(X)_{ab}$ of $\overline{\pi}_1(X)$ to give a homomorphism 
$(\widetilde{H}_1)_{ab}:\overline{\pi}_{1}(X)_{ab}\rightarrow H_1(X)$. 
\begin{cor}\label{pi1abelian}  
For any digraph $X$, the map 
$\overline{\pi}_{1}(X)_{ab}\xrightarrow{(\widetilde{H}_1)_{ab}} H_1(X)$ 
is an isomorphism. 
\end{cor}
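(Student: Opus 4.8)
The plan is to deduce the statement from the classical GLMY Hurewicz theorem together with Proposition~\ref{GLMYHurewicz}. Since $H_1(X)$ is abelian, both $\widetilde{H}_1$ and the composite $h''\colon\overline{\pi}_1(X)\xrightarrow{\cong}\pi_1(X)\xrightarrow{h'}H_1(X)$ factor through the abelianization $\overline{\pi}_1(X)_{ab}$, inducing homomorphisms $(\widetilde{H}_1)_{ab}$ and $(h'')_{ab}$ out of $\overline{\pi}_1(X)_{ab}$. I would first record that $(h'')_{ab}$ is an isomorphism: by~\cite[Theorem~4.23]{GLMY15} the abelianized GLMY Hurewicz map $\pi_1(X)_{ab}\to H_1(X)$ is an isomorphism, and the isomorphism $\overline{\pi}_1(X)\cong\pi_1(X)$ of~\cite[Corollary~4.24]{LWYZ24} induces an isomorphism $\overline{\pi}_1(X)_{ab}\cong\pi_1(X)_{ab}$ on abelianizations; composing the two shows that $(h'')_{ab}$ is an isomorphism.

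It then remains to identify $(\widetilde{H}_1)_{ab}$ with $(h'')_{ab}$. Here I would appeal not merely to the equality of images asserted in the statement of Proposition~\ref{GLMYHurewicz}, but to the stronger pointwise identity established in its proof: for any loop $\gamma\colon(J_n,\partial J_n)\to(X,x_0)$ with $n\geq 3$ one has $\widetilde{H}_1([\gamma])=h'([\gamma])$ exactly, while for $n=2$ both sides vanish. Since every class of $\overline{\pi}_1(X)$ is represented by such a loop, and the representing length may be taken arbitrarily large by subdivision, this yields $\widetilde{H}_1=h''$ as homomorphisms $\overline{\pi}_1(X)\to H_1(X)$. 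Passing to abelianizations gives $(\widetilde{H}_1)_{ab}=(h'')_{ab}$, whence $(\widetilde{H}_1)_{ab}$ is an isomorphism.

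The one point requiring care, and the step I expect to be the main obstacle, is precisely the passage from ``same image'' to ``same map'' at the level of abelianizations: equality of images alone forces only surjectivity of $(\widetilde{H}_1)_{ab}$, not injectivity. The resolution is that the computation in the proof of Proposition~\ref{GLMYHurewicz} is carried out representative-wise and is independent of the length of the representing loop, so it in fact establishes $\widetilde{H}_1([\gamma])=h'([\gamma])$ for every representative, not just agreement of the two images. I would therefore make this pointwise equality explicit when invoking Proposition~\ref{GLMYHurewicz}, so that injectivity of $(\widetilde{H}_1)_{ab}$ is inherited from injectivity of $(h'')_{ab}$ rather than being argued separately, after which the corollary follows at once.
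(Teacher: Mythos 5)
Your proposal follows essentially the same route as the paper: deduce that $(h'')_{ab}$ is an isomorphism from \cite[Theorem 4.23]{GLMY15} and the identification $\overline{\pi}_1(X)\cong\pi_1(X)$, then transfer this to $(\widetilde{H}_1)_{ab}$ via Proposition~\ref{GLMYHurewicz}. The one place you diverge is in fact an improvement: the paper's own proof argues only from the \emph{equality of images} of $\widetilde{H}_1$ and $h''$, concluding surjectivity and then asserting the isomorphism, which as you observe does not by itself yield injectivity of $(\widetilde{H}_1)_{ab}$ (absent an unstated Hopfian-type argument, which would anyway require finite generation). Your insistence on the pointwise identity $\widetilde{H}_1([\gamma])=h'([\gamma])$ --- which the computation in the proof of Proposition~\ref{GLMYHurewicz} does establish for every representative loop, with both sides vanishing when $n=2$ --- is exactly what is needed to conclude $(\widetilde{H}_1)_{ab}=(h'')_{ab}$ and hence inherit injectivity. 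So your argument is correct and, at this step, tighter than the one in the paper.
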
 

\begin{proof} 
By~\cite[Theorem 4.23]{GLMY15}, the Hurewicz homomorphism $\pi_{1}(X)\xrightarrow{h'} H_1(X)$ induces an isomorphism 
$\pi_1(X)_{ab}\cong H_{1}(X)$. As $\overline{\pi}_{1}(X)\cong\pi_{1}(X)$, the definition of $h''$ implies that it induces an isomorphism $\overline{\pi}_1(X)_{ab}\cong H_{1}(X)$. By Proposition~\ref{GLMYHurewicz}, $\widetilde{H}_1$ has the same image as $h''$, so $\overline{\pi}_{1}(X)\xrightarrow{\widetilde{H}_1} H_1(X)$ is onto and the image may be regarded as $\overline{\pi}_1(X)_{ab}$. Hence $\overline{\pi}_{1}(X)_{ab}\xrightarrow{(\widetilde{H}_1)_{ab}} H_1(X)$ 
is an isomorphism. 
\end{proof}

\section{The existence of higher digraph homotopy groups} 
In this section we prove Theorem~\ref{introexistence}. 
Let $G$ be a based digraph and suppose that $\pi_{1}(G)$ is abelian. By Corollary~\ref{pi1abelian} there is an isomorphism $\overline{\pi}_{1}(G)\stackrel{\widetilde{H}_1}{\rightarrow} H_{1}(G)$. Iterating the commutative diagram in Theorem~\ref{pcompa} then gives a commutative diagram
$$\xymatrix{
     \overline{\pi}_{1}(G)\ar[r]^-{E_1}\ar[d]_{\cong}^{\widetilde{H}_1} & \overline{\pi}_{2}(\Sigma G)\ar[r]^-{E_2}\ar[d]^{\widetilde{H}_2}
         & \cdots\ar[r]^-{E_{n-1}} & \overline{\pi}_{n}(\Sigma^{n-1} G)\ar[d]^{\widetilde{H}_n} \\
     H_{1}(G)\ar[r]^-{E'_1} & H_{2}(\Sigma G)\ar[r]^-{E'_2} & \cdots\ar[r]^-{E'_{n-1}} & H_{n}(\Sigma^{n-1} G) }$$ 
where $\Sigma^i G$ is defined recursively for $i\geq 2$ by $\Sigma^i G=\Sigma(\Sigma^{i-1} G)$. 
By Proposition~\ref{susiso}, each GLMY suspension $E'_1,\ldots,E'_{n-1}$ is an isomorphism. Therefore going around the diagram in the anti-clockwise direction, each of the maps is an isomorphism. The commutativity of the diagram therefore implies that the composition of the maps in the clockwise direction is an isomorphism. Consequently, $\overline{\pi}_{1}(G)$
retracts off $\overline{\pi}_{n}(\Sigma^{n-1} G)$. Thus
$$\overline{\pi}_{n}(\Sigma^{n-1} G)\cong\overline{\pi}_{1}(G)\oplus A$$ for some abelian group $A$. 

We will use this to show the existence of nontrivial higher digraph homotopy groups. To do so, it is helpful to use the isomorphism $\overline{\pi}_1(G)\cong H_1(G)$ to rewrite the splitting of $\overline{\pi}_n(\Sigma^{n-1} G)$ as  
\begin{equation}\label{H1splitsoff} 
\overline{\pi}_{n}(\Sigma^{n-1} G)\cong H_{1}(G)\oplus A. 
\end{equation} 
In particular, any group that appears as a direct summand of $H_{1}(G)$ also appears as a direct summand of $\overline{\pi}_{n}(\Sigma^{n-1} G)$ for any $n\geq 2$. This leads to existence results for higher digraph homotopy groups. 

For example, to this point there is only one known nontrivial higher digraph homotopy group. The example in~\cite[Example 4.22]{LWYZ24} shows that there is a digraph $G$ such that $\overline{\pi}_{2}(G)\neq 0$, although the group $\overline{\pi}_{2}(G)$ is not explicitly identified. We prove that for each $n\geq 1$ there is a digraph whose $n^{th}$-digraph homotopy group is nontrivial (in fact, it has a $\mathbb{Z}$-summand). 

\begin{prop}\label{existenceZ} 
For every $n\geq 1$ there is a digraph $G_n$ such that $\overline{\pi}_{n}(G_n)$ has a $\mathbb{Z}$-summand. 
\end{prop}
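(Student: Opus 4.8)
The plan is to reduce the statement to the single degree-one input encoded in the splitting~(\ref{H1splitsoff}). The discussion preceding the proposition establishes that whenever $G$ is a based digraph with $\pi_1(G)$ abelian, iterating the commutative square of Theorem~\ref{pcompa} (using that each GLMY suspension $E'_i$ is an isomorphism by Proposition~\ref{susiso}) yields a retraction and hence $\overline{\pi}_n(\Sigma^{n-1}G)\cong H_1(G)\oplus A$ for some abelian group $A$. Thus it suffices to exhibit a \emph{single} digraph $G$ for which $\pi_1(G)$ is abelian and $H_1(G)$ has a $\mathbb{Z}$-summand, and then to set $G_n=\Sigma^{n-1}G$, with the convention $\Sigma^0 G=G$. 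For $n=1$ this gives $\overline{\pi}_1(G_1)\cong H_1(G)$ directly via Corollary~\ref{pi1abelian}, and for $n\geq 2$ it gives $\overline{\pi}_n(G_n)\cong\mathbb{Z}\oplus A$; in every case $\overline{\pi}_n(G_n)$ contains a $\mathbb{Z}$-summand.

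First I would fix the concrete candidate $G$ to be the \emph{directed square}: the digraph on vertices $\{0,1,2,3\}$ with arrows $0\rightarrow 1$, $2\rightarrow 1$, $2\rightarrow 3$ and $0\rightarrow 3$, based at $0$. Here $0$ and $2$ are sources and $1$ and $3$ are sinks, so this is the prototypical non-contractible cycle digraph. I would then verify $H_1(G)\cong\mathbb{Z}$ by a direct path-homology computation: since $1$ and $3$ have no outgoing arrows there are no allowed $2$-paths, so $\Omega_2(G)=0$ and $H_1(G)=\ker\partial_1$. The four allowed $1$-paths $01,21,23,03$ span $\Omega_1(G)$, and solving $\partial_1\bigl(a\cdot 01+b\cdot 21+c\cdot 23+d\cdot 03\bigr)=0$ forces a one-dimensional kernel generated by $01-21+23-03$, whence $H_1(G)\cong\mathbb{Z}$.

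Next I would record that the fundamental group of this cycle is $\pi_1(G)\cong\mathbb{Z}$ by the computation of Grigor'yan, Lin, Muranov and Yau~\cite{GLMY15}; in particular $\pi_1(G)$ is abelian, so the hypotheses required for~(\ref{H1splitsoff}) are met. (Consistency is automatic: a contractible digraph would have $H_1=0$ by homotopy invariance, so the computation $H_1(G)\cong\mathbb{Z}$ already confirms $G$ is not contractible, and since $\pi_1(G)_{ab}\cong H_1(G)\cong\mathbb{Z}$, abelianness of $\pi_1(G)$ forces $\pi_1(G)\cong\mathbb{Z}$.) Feeding $G$ into the splitting then completes the argument.

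The main obstacle is not the homological algebra, which is immediate once~(\ref{H1splitsoff}) is granted, but rather ensuring that the chosen $G$ \emph{simultaneously} satisfies both hypotheses: that $\pi_1(G)$ is genuinely abelian (so the iterated suspension diagram applies and $\overline{\pi}_1(G)\cong H_1(G)$), and that $H_1(G)$ contains a free $\mathbb{Z}$-summand rather than only torsion. For the directed square both hold and are standard, and this is the only step where a genuine input about a specific digraph is needed; the rest of the proof is formal manipulation of the splitting~(\ref{H1splitsoff}).
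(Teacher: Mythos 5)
Your proposal is correct and follows essentially the same route as the paper: exhibit a $4$-cycle digraph with $H_1\cong\mathbb{Z}$ and abelian $\pi_1$, feed it into the splitting~(\ref{H1splitsoff}), and set $G_n=\Sigma^{n-1}G$. The only difference is that you use the alternating-orientation square and verify $H_1\cong\mathbb{Z}$ by a direct path-homology computation, whereas the paper uses the cyclically oriented square $0\rightarrow1\rightarrow2\rightarrow3\rightarrow0$ and cites \cite[Example 2.8]{GLMY15}; the remark following the paper's proof explicitly notes that any cycle digraph with $H_1\cong\mathbb{Z}$ works equally well.
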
 

\begin{proof} 
By~\cite[Example 2.8]{GLMY15}, if $G_1$ is the digraph $0\rightarrow 1\rightarrow 2\rightarrow 3\rightarrow 0$ then $H_1(G)\cong\mathbb{Z}$.
Thus if $G_{i}=\Sigma G_{i-1}$ for each $2\leq i\leq n$, then~(\ref{H1splitsoff}) implies that $\overline{\pi}_{n}(G_n)$ has $\mathbb{Z}$ as a summand. 
\end{proof} 

In fact, by~\cite[Example 2.8]{GLMY15}, there is a large family of cycle digraphs (of which the graph $G_1$ used in the proof of Proposition~\ref{existenceZ} is one) having first digraph homology group isomorphic to $\mathbb{Z}$. Any of these could be used in place of $G_1$ to define new examples of digraphs $G_n$ with $\overline{\pi}_n(G_n)$ having a $\mathbb{Z}$-summand. 

Proposition~\ref{existenceZ} can also be extended to any finite torsion-free abelian group. 

\begin{prop}\label{existenceZk} 
For every $n\geq 1$ and $k\geq 1$ there is a digraph $G^k_n$ such that $\overline{\pi}_{n}(G^k_n)$ has a $\mathbb{Z}^{\oplus k}$-summand. 
\end{prop}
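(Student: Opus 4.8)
The plan is to reduce the statement to exhibiting a single based digraph $G$ whose digraph fundamental group is abelian and whose first path homology is $\mathbb{Z}^{\oplus k}$, and then to suspend. Once such a $G$ is in hand, set $G^k_n=\Sigma^{n-1}G$. Since $\pi_1(G)$ is abelian, $\overline{\pi}_1(G)\cong\pi_1(G)$ is abelian, so Corollary~\ref{pi1abelian} makes $\widetilde{H}_1\colon\overline{\pi}_1(G)\to H_1(G)$ an isomorphism rather than merely an abelianization. Consequently the splitting argument behind~(\ref{H1splitsoff}), which combines the iterated commutative diagram of Theorem~\ref{pcompa} with the isomorphisms $E'_i$ of Proposition~\ref{susiso}, applies verbatim and yields $\overline{\pi}_n(\Sigma^{n-1}G)\cong H_1(G)\oplus A\cong\mathbb{Z}^{\oplus k}\oplus A$ for some abelian group $A$ and every $n\geq 1$ (the case $n=1$ being the isomorphism $\overline{\pi}_1(G)\cong H_1(G)$ itself). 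This produces the required $\mathbb{Z}^{\oplus k}$-summand.

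For the digraph $G$ I would take the $k$-fold box product $G=C^{\Box k}$ of the directed $4$-cycle $C\colon 0\to 1\to 2\to 3\to 0$ used in Proposition~\ref{existenceZ}. Two standard facts of GLMY theory are needed. First, by the Künneth formula for path homology under box products, $H_*(C^{\Box k})\cong H_*(C)^{\otimes k}$; since $H_0(C)\cong H_1(C)\cong\mathbb{Z}$ and $H_i(C)=0$ for $i\geq 2$, the degree-$1$ part is a direct sum of the $k$ tensor monomials placing the single $H_1$-factor in one of the $k$ slots, so $H_1(G)\cong\mathbb{Z}^{\oplus k}$. Second, by the product formula for the digraph fundamental group under the box product, $\pi_1(C^{\Box k})\cong\pi_1(C)^{\times k}\cong\mathbb{Z}^{\oplus k}$, which is abelian; this supplies the abelianness hypothesis on $\pi_1(G)$.

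The main obstacle is exactly the tension between realizing $\mathbb{Z}^{\oplus k}$ in degree-$1$ homology and keeping $\pi_1$ abelian, the latter being essential because the retraction producing~(\ref{H1splitsoff}) uses that $\widetilde{H}_1$ is an honest isomorphism. The naive way to get $\mathbb{Z}^{\oplus k}$ in $H_1$, namely wedging $k$ copies of $C$ (or, in the same spirit, suspending $k+1$ disjoint points), fails this test: such constructions have first homology $\mathbb{Z}^{\oplus k}$ but a free, hence for $k\geq 2$ nonabelian, fundamental group, so $\widetilde{H}_1$ is only the abelianization map of Corollary~\ref{pi1abelian} and the splitting does not apply. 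Passing to the box product repairs the fundamental group, since a product of copies of $\mathbb{Z}$ is abelian, at the price of having to confirm via Künneth that the first homology is still precisely $\mathbb{Z}^{\oplus k}$. Granting the Künneth and $\pi_1$-product formulas, the proposition then follows immediately from~(\ref{H1splitsoff}); any family of cycle digraphs with $H_1\cong\mathbb{Z}$, as in the remark following Proposition~\ref{existenceZ}, could be used in place of $C$ in the factors.
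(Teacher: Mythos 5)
Your proposal is correct and follows essentially the same route as the paper: take the $k$-fold box product of the directed $4$-cycle, invoke the K\"{u}nneth formula for path homology to get $H_1\cong\mathbb{Z}^{\oplus k}$, and then suspend $(n-1)$ times and apply the splitting~(\ref{H1splitsoff}). You are in fact slightly more careful than the paper's own proof, which does not explicitly verify that $\pi_1(G_1^{\Box k})$ is abelian, a hypothesis needed for~(\ref{H1splitsoff}) to apply.
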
 

\begin{proof} 
By~~\cite[Theorem 4.7]{GMY17}, there is a K\"{u}nneth formula for the box product in path homology. Thus if $G_1$ is the digraph used in the proof of Proposition~\ref{existenceZ}, then $H_1(G_1^{\Box k})\cong\mathbb{Z}^{\oplus k}$. Therefore, if $G^k_i=\Sigma G^k_{i-1}$ for $2\leq i\leq n$, then~(\ref{H1splitsoff}) implies that $\overline{\pi}_{n}(G^k_n)$ has $\mathbb{Z}^{\oplus k}$ as a summand. 
\end{proof} 

There is also torsion in higher digraph homotopy groups. 

\begin{prop}\label{existencetorsion} 
For every $n\geq 1$ there are digraphs $G_n(2)$ and $G_n(3)$ such that $\overline{\pi}_{n}(G_n(2))$ has a $\mathbb{Z}/2\mathbb{Z}$-summand and $\overline{\pi}_{n}(G_n(3))$ has a $\mathbb{Z}/3\mathbb{Z}$-summand. 
\end{prop}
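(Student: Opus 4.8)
The plan is to proceed exactly as in the proofs of Propositions~\ref{existenceZ} and~\ref{existenceZk}, reducing the existence of torsion in a higher digraph homotopy group to the existence of torsion in the degree-$1$ path homology of a suitable base digraph. The splitting~(\ref{H1splitsoff}) shows that for any based digraph $G$ with abelian $\pi_1(G)$ one has $\overline{\pi}_{n}(\Sigma^{n-1} G)\cong H_1(G)\oplus A$ for some abelian group $A$, so any direct summand of $H_1(G)$ persists as a direct summand of $\overline{\pi}_n(\Sigma^{n-1}G)$. The whole proposition therefore reduces to exhibiting one digraph $G(2)$ with a $\mathbb{Z}/2\mathbb{Z}$-summand in $H_1(G(2))$ and one digraph $G(3)$ with a $\mathbb{Z}/3\mathbb{Z}$-summand in $H_1(G(3))$.

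First I would record (or cite from the GLMY literature) explicit small digraphs realizing $2$- and $3$-torsion in degree-$1$ path homology; such digraphs are known, and this is precisely the input that forces the restriction to $m=2,3$ noted after Theorem~\ref{introexistence}. A subtlety I must address here is that the derivation of~(\ref{H1splitsoff}) assumes $\pi_1$ of the base digraph is abelian, so that Corollary~\ref{pi1abelian} upgrades the isomorphism $\overline{\pi}_1(G)_{ab}\cong H_1(G)$ to $\overline{\pi}_1(G)\cong H_1(G)$. Accordingly I would choose $G(2)$ and $G(3)$ among the known torsion examples having abelian fundamental group, or otherwise verify abelianness directly for the chosen digraphs.

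Next, with $G(2)$ in hand, set $G_1(2)=G(2)$ and define $G_i(2)=\Sigma G_{i-1}(2)$ for $2\leq i\leq n$, so that $G_n(2)=\Sigma^{n-1}G(2)$. Applying~(\ref{H1splitsoff}) gives $\overline{\pi}_n(G_n(2))\cong H_1(G(2))\oplus A$, and since $\mathbb{Z}/2\mathbb{Z}$ is a summand of $H_1(G(2))$ it is a summand of $\overline{\pi}_n(G_n(2))$. The identical argument applied to $G(3)$ produces $G_n(3)$ with a $\mathbb{Z}/3\mathbb{Z}$-summand, completing the proof.

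The main obstacle lies entirely in the first step: the existence of digraphs with prescribed torsion in $H_1$ is a genuinely nontrivial fact about path homology rather than a formal consequence of anything developed in this paper, and it is exactly why only $2$- and $3$-torsion can be claimed. Once such base digraphs are supplied and their fundamental groups checked to be abelian, the remainder is the same formal application of the suspension splitting already used in the two preceding propositions.
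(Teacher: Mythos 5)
Your proposal follows the paper's proof essentially verbatim: the paper cites \cite[Figure 5]{CHY22} for digraphs $G(2)$ and $G(3)$ with $H_1(G(2))\cong\mathbb{Z}/2\mathbb{Z}$ and $H_1(G(3))\cong\mathbb{Z}/3\mathbb{Z}$, sets $G_i(m)=\Sigma G_{i-1}(m)$, and applies the splitting~(\ref{H1splitsoff}) exactly as you do. Your added remark that one must check the base digraphs have abelian $\pi_1$ (since~(\ref{H1splitsoff}) is derived under that hypothesis via Corollary~\ref{pi1abelian}) is a point the paper's own proof leaves implicit, so your version is if anything slightly more careful.
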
 

\begin{proof} 
By~\cite[Figure 5]{CHY22}, there are digraphs $G(2)$ and $G(3)$ such that $H_1(G_2)\cong\mathbb{Z}/2\mathbb{Z}$ and $H_1(G(3))\cong\mathbb{Z}/3\mathbb{Z}$. Therefore, if $G_1(2)=G(2)$ and $G_1(3)=G(3)$, and we define $G_i(2)=\Sigma G_{i-1}(2)$ and $G_i(3)=\Sigma G_{i-1}(3)$ for $2\leq i\leq n$, then~(\ref{H1splitsoff}) implies that $\overline{\pi}_{n}(G_n(2))$ has $\mathbb{Z}/2\mathbb{Z}$ as a summand and $\overline{\pi}_{n}(G_n(3))$ has $\mathbb{Z}/3\mathbb{Z}$ as a summand. 
\end{proof} 

\begin{proof}[Proof of Theorem~\ref{introexistence}] 
Combine Propositions~\ref{existenceZ}, \ref{existenceZk} and \ref{existencetorsion}. 
\end{proof} 

\begin{rem} 
In~\cite{CHY22} the authors conjecture that a certain iterated construction should produce a digraph $G(m)$ with $H_1(G(m))\cong\mathbb{Z}/m\mathbb{Z}$ for every $m\geq 2$. They claim the conjecture is true for $2\leq m\leq 8$ but only explicitly show it is true for $2\leq m\leq 3$. If the conjecture were true, then the same argument as for Proposition~\ref{existencetorsion} would give the existence of digraphs whose $n^{th}$-digraph homotopy group has a torsion summand of arbitrarily high order. 
\end{rem}

\end{document}